\documentclass[11pt]{amsart}

\usepackage{amsmath, amssymb, amsthm}

\usepackage{graphicx}

\usepackage[cmtip,all]{xy}

\usepackage{hyperref} 

\newtheorem{theorem}{Theorem}[section]
\newtheorem{corollary}[theorem]{Corollary}
\newtheorem{lemma}[theorem]{Lemma}
\newtheorem{proposition}[theorem]{Proposition}

\theoremstyle{definition}
\newtheorem{construction}[theorem]{Construction}
\newtheorem{definition}[theorem]{Definition}
\newtheorem{example}[theorem]{Example}

\newtheorem{question}[theorem]{Question}

\theoremstyle{remark}
\newtheorem{remark}[theorem]{Remark}

\DeclareMathOperator{\gldim}{gldim}

\DeclareMathOperator{\Ext}{Ext}

\newcommand{\id}{\operatorname{id}}
\newcommand{\Hom}{\operatorname{Hom}}

\newcommand{\im}{\operatorname{im}}

\newcommand{\Z}{\mathbb{Z}}

\newcommand{\la}{\langle}
\newcommand{\ra}{\rangle}
\newcommand{\tensor}{\otimes}
\newcommand{\tsr}{\tensor}

\renewcommand{\k}{\mathbb K}

\newcommand{\s}{\sigma}
\renewcommand{\l}{\lambda}

\newcommand{\g}{\gamma}

\newcommand{\p}{\rho}
\renewcommand{\t}{\tau}
\newcommand{\w}{\omega}

\newcommand{\N}{{\mathbb{N}}}
\renewcommand{\P}{{\mathbb{P}}}

\newcommand{\mc}{\mathcal}

\numberwithin{equation}{section}

\begin{document}

\title[Noncommutative Matrix Factorizations]{Curved DG Modules and Matrix Factorizations from  Noncommutative Quadric Hypersurfaces}


\author{Peter Goetz}
\address{Department of Mathematics and Data Science,
California State Polytechnic University, Humboldt,
Arcata, CA 95521}
\email{pdg11@humboldt.edu}

\subjclass[2020]{Primary 14A22, 16E45, 16S37, 16S38; Secondary 16E35, 16W50, 18G35}
\keywords{Noncommutative matrix factorizations, curved dg modules, noncommutative quadric hypersurfaces}
\date{}

\begin{abstract}
The category of noncommutative quadratic quadric hypersurfaces, ${\tt Quad}\text{-}{\tt QHS}$, consists of pairs $(A, f)$, where $A$ is a quadratic algebra and $f \in A$ is a nonzero degree $2$ element. We associate to such $(A, f)$ a pair $(\bar{A}^!, f^!)$, and show that this association makes ${\tt Quad}\text{-}{\tt QHS}$ into a category with duality. We construct a faithful functor from the category of graded modules over $\bar{A}^!$ to the homotopy category of curved DG modules over a canonical curved DG algebra $(A \tsr \bar{A}^!, d, f \tsr f^!)$. If $A$ satisfies the left strong rank condition and $f \in A$ is not a right zero divisor, we show that the restriction of our functor to a natural full subcategory of the category of graded modules over $\bar{A}^!$ is valued in a stable category of noncommutative matrix factorizations of $f$. When $A$ is Koszul of finite global dimension and $f \in A$ is normal and regular, we prove that the even Clifford algebra, $\bar{A}^![(f^!)^{-1}]_0$, is isomorphic to a canonical PBW-deformation of a Zhang twist of the $2$-Veronese subalgebra of the Koszul dual $A^!$. Finally, we study several classes of Artin-Schelter regular algebras to illustrate our results. 
\end{abstract}

\maketitle


\section{Introduction} 

\subsection{Background}

The notion of a \emph{matrix factorization} of an element in a commutative ring was introduced by Eisenbud, \cite{Eisenbud}. If $A$ is a commutative ring and $f \in A$, a matrix factorization of $f$ is a pair of maps $\phi: F \to G$, $\psi: G \to F$ between free $A$-modules such that $\psi \phi = f \id_F$ and $\phi \psi = f \id_G$. In the case when $A$ is a regular local ring and $f \in A$, \cite[Theorem 6.1]{Eisenbud} shows that matrix factorizations are intimately related to periodic resolutions and maximal Cohen-Macaulay (MCM) modules over the hypersurface ring $A/(f)$. 

Let $\k$ be a field. In \cite{BEH}, Buchweitz-Eisenbud-Herzog considered the case of a commutative polynomial ring $A = \k[x_1, \ldots, x_n]$ and a quadratic form $f \in A$ . They proved, \cite[Theorem 2.1]{BEH}, there is an equivalence of categories from the category of graded MCM $A/(f)$-modules without free summands to the category of modules over the \emph{even Clifford algebra} $C_0(f)$. More generally, in \cite[Appendix]{BEH} Buchweitz explained that this equivalence is closely related to the BGG-correspondence associated to a pair of graded Gorenstein noetherian Koszul dual algebras $(A, A^!)$, also see \cite[Theorem 4.4.1, Theorem C.1.3]{Buchweitz}.

Many researchers have built upon the works \cite{Buchweitz, BEH, Eisenbud} and extended these ideas to noncommutative settings. Of relevance to this paper, we were motivated by \cite{CCKM, He-Ye, Mori-Ueyama-1, Mori-Ueyama-2, Smith-vdB, U}. We summarize some of the main results in these works as follows. Let $A$ be a connected graded noetherian $\k$-algebra, let ${\tt GrMod}\text{-}A$ and ${\tt Tors}\text{-}A$ denote the category of graded right $A$-modules and the category of graded torsion right $A$-modules, respectively. Then the \emph{noncommutative projective scheme} associated to $A$ is by definition the Serre quotient \[{\tt Tails} \, A := \dfrac{{\tt GrMod}\text{-}A}{{\tt Tors}\text{-}A}.\]  The \emph{global dimension} of ${\tt Tails} \, A$ is defined as
\[\gldim({\tt Tails} \, A) := \sup\big\{i : \Ext^i_{{\tt Tails} \, A}(\mathcal{M},\mathcal{N}) \ne 0 \text{ for some } \mathcal{M}, \mathcal{N} \in {\tt Tails} \, A\big\}.\] The category ${\tt Tails} \, A$ is \emph{smooth} if it has finite global dimension. 

Smith-Van den Bergh studied the case where $A$ is a connected graded noetherian Gorenstein Koszul algebra of finite global dimension and $f \in A_2$ is a \emph{central regular} element. Letting $\bar{A} = A/\la f \ra$,  \cite[Proposition 5.2]{Smith-vdB} states an equivalence from a stable category of graded MCM modules over $\bar{A}$ to the bounded derived category of ${\tt Tails} \, \bar{A}^!$. Furthermore, there is an equivalence ${\rm D}^{\rm b}({\tt Tails} \, \bar{A}^!) \simeq {\rm D}^{\rm b}({\tt mod}\text{-}C(A))$, where $C(A)$ is a finite-dimensional algebra analogous to the even Clifford algebra, and moreover, if $C(A)$ is semisimple, then ${\tt Tails} \, A$ is smooth. 

These results were extended by Mori-Ueyama to the case where $f \in A$ is only required to be a \emph{normal regular} element. In \cite{Mori-Ueyama-1} they defined a category of \emph{graded noncommutative matrix factorizations} and showed that if $A$ is a noetherian Artin-Schelter regular algebra and $f \in A_d$ is a normal regular element, then there is an equivalence from a stable category of noncommutative matrix factorizations of $f$ over $A$ to a stable category of graded MCM modules over $\bar{A}$, \cite[Theorem 6.6]{Mori-Ueyama-1}. In subsequent work, \cite{Mori-Ueyama-2}, they defined the even Clifford algebra $C(A)$ when $f \in A_2$ is normal and regular, and proved an equivalence from the stable category of graded MCM modules over $\bar{A}$ to the bounded derived category of finitely generated modules over $C(A)$, \cite[Lemma 4.13]{Mori-Ueyama-2}. Furthermore, they proved that the algebra $C(A)$ is semisimple if and only if the category ${\tt Tails}\, \bar{A}$ is smooth. To give a more precise statement of this result, recall that Ueyama, \cite[Definition 2.2]{U}, defines an algebra $A$ to be a \emph{graded isolated singularity} if ${\tt Tails} \, A$ is smooth. 

\begin{theorem}[{\cite[Theorem 5.5]{Mori-Ueyama-2}}]\label{isolated singularity}
Let $A$ be a quadratic noetherian Artin-Schelter regular algebra of global dimension $n$ and let $f \in A_2$ be a normal regular element. Let $\bar{A} = A/\la f \ra$ and let $C(A) = \bar{A}^![(f^!)^{-1}]_0$. Then the following statements are equivalent.
\begin{itemize}
\item[(1)] $\bar{A}$ is a graded isolated singularity. 
\item[(2)] $\bar{A}$ has finite Cohen-Macaulay representation type.
\item[(3)] $C(A)$ is semisimple. 
\end{itemize}
\end{theorem}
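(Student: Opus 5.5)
The plan is to route all three conditions through the stable category $\underline{\tt CM}^{\mathbb Z}(\bar A)$ of graded maximal Cohen--Macaulay $\bar A$-modules. The bridge is the equivalence $\underline{\tt CM}^{\mathbb Z}(\bar A) \simeq {\rm D}^{\rm b}({\tt mod}\text{-}C(A))$ of \cite[Lemma 4.13]{Mori-Ueyama-2}. First I would record the basic properties of $\bar A$. Since $A$ is noetherian AS-regular of global dimension $n$ and $f \in A_2$ is normal and regular, $\bar A$ is noetherian AS-Gorenstein of dimension $n-1$. Consequently $\underline{\tt CM}^{\mathbb Z}(\bar A)$ is a Hom-finite triangulated category with a Serre functor, which is induced by the Gorenstein parameter and the Nakayama twist. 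Moreover $C(A) = \bar A^![(f^!)^{-1}]_0$ is finite dimensional, because $\bar A^!$ is finite over the normal element $f^!$; this is the Koszul picture.

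For (1)$\Leftrightarrow$(3), I would use Ueyama's criterion \cite{U}: an AS-Gorenstein algebra is a graded isolated singularity if and only if $\underline{\tt CM}^{\mathbb Z}(\bar A)$ has a Serre functor in the strong sense, equivalently if and only if $\bar A$ has the property that every MCM module is locally free on ${\tt Tails}\,\bar A$. Transporting along the equivalence, smoothness of ${\tt Tails}\,\bar A$ corresponds to ${\rm D}^{\rm b}({\tt mod}\text{-}C(A))$ being equivalent to ${\rm D}^{\rm b}$ of a category of finite global dimension. A finite-dimensional algebra has ${\rm D}^{\rm b}({\tt mod}\text{-}C)$ with a Serre functor exactly when $\gldim C < \infty$.

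This alone does not force semisimplicity, so I would add the structural input coming from $f$. Multiplication by $f^!$ gives periodicity: the shift $[2]$ on $\underline{\tt CM}^{\mathbb Z}(\bar A)$ agrees with the degree twist $(1)$ up to a Nakayama-type autoequivalence, so the Serre functor is a power of $[1]$ composed with a twist. Using the Auslander--Reiten formula in the isolated-singularity case, the AR translate $\tau$ must be a degree shift of $\Omega^{2-n}$. On the $C(A)$ side this forces $\tau$ to act as a shift of the form $[\text{even}]$ composed with an automorphism. Combined with $\gldim C(A)<\infty$, I expect this to imply $\gldim C(A)=0$, because a nonsemisimple algebra would have an indecomposable $M$ with $\Ext^1(M,M')\neq 0$, which contradicts the Serre duality degree. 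This degree-counting step is the main obstacle. As a first attempt I would reduce to graded simple MCM modules and check that all their stable Ext groups are concentrated in one degree.

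For (3)$\Rightarrow$(2): if $C(A)$ is semisimple, then ${\rm D}^{\rm b}({\tt mod}\text{-}C(A))$ has only finitely many indecomposables up to shift, namely the simple modules. Hence $\underline{\tt CM}^{\mathbb Z}(\bar A)$ has finitely many indecomposables up to degree shift, which is finite CM type. For (2)$\Rightarrow$(1), I would invoke the graded version of Auslander's theorem, as in \cite{U}: finite CM representation type of an AS-Gorenstein algebra forces an isolated singularity, via the existence of AR sequences and the vanishing of Ext on ${\tt Tails}$ in high degree.
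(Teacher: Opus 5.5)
The paper gives no proof of this statement; it is quoted from \cite{Mori-Ueyama-2}, so your proposal has to stand on its own. Your outline is the natural one: Ueyama's Serre functor for isolated singularities, the equivalence $\underline{\tt CM}^{\Z}(\bar A)\simeq{\rm D}^{\rm b}({\tt mod}\text{-}C(A))$, and a graded Auslander theorem for (2)$\Rightarrow$(1). However, the decisive implication (1)$\Rightarrow$(3) is not proved.

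\textbf{The gap in (1)$\Rightarrow$(3).} Mere existence of a Serre functor, carried across an abstract triangle equivalence, yields only $\gldim C(A)<\infty$. Your ``degree counting'' has no content as it stands. A nonzero $\Ext^1_{C(A)}(M,M')$ contradicts Serre duality only if you already know that the Serre functor of ${\rm D}^{\rm b}({\tt mod}\text{-}C(A))$ sends modules to modules, up to one fixed shift. The periodicity you invoke cannot supply this, for two reasons.
\begin{itemize}
\item[(a)] It is misstated. Since $\deg f=2$, one has $\Omega^2M\cong M_{\varphi}(-2)$, so $[2]$ matches $(2)$ up to a twist, not $(1)$. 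Also, since $\dim\bar A=n-1$, the translate $\tau$ involves $\Omega^{3-n}$, not $\Omega^{2-n}$.
\item[(b)] More importantly, periodicity is a relation internal to $\underline{\tt CM}^{\Z}(\bar A)$. An autoequivalence of ${\rm D}^{\rm b}({\tt mod}\text{-}C(A))$ need not preserve the standard heart, so periodicity says nothing about how $(1)$ or a twist acts on ${\tt mod}\text{-}C(A)$.
\end{itemize}

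\textbf{The missing idea.} You need to open up the equivalence, which factors as $\underline{\tt CM}^{\Z}(\bar A)\simeq{\rm D}^{\rm b}({\tt Tails}\,\bar A^!)\simeq{\rm D}^{\rm b}({\tt mod}\text{-}C(A))$.
\begin{itemize}
\item[(i)] In the Koszul-duality step, the degree shift $(1)$ becomes a degree shift composed with $[\pm1]$. Twists by graded automorphisms of $\bar A$ become twists by the dual automorphisms of $\bar A^!$.
\item[(ii)] The second step uses that ${\mc Cl}(A,f)$ is strongly graded. There, degree shifts become the exact autoequivalences $-\tsr_{C(A)}{\mc Cl}(A,f)_i$ of ${\tt mod}\text{-}C(A)$, and twists remain exact.
\item[(iii)] Hence Ueyama's explicit Serre functor $(-)_\nu(-\ell)[d-1]$, with $d=n-1$ and $\ell=n-2$, transports to $\sigma\circ[m]$, where $\sigma$ is an exact autoequivalence of ${\tt mod}\text{-}C(A)$.
\item[(iv)] Any Serre functor sends $C(A)$ to the module $D\,C(A)$, where $D=\Hom_{\k}(-,\k)$. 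This forces $m=0$.
\item[(v)] Then for modules $X,Y$ and $i\geq 1$ you get $\Ext^i_{C(A)}(X,Y)\cong D\Hom(Y,\sigma(X)[-i])=0$. So $C(A)$ is semisimple, and the finite-global-dimension criterion is not even needed.
\end{itemize}

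\textbf{A smaller gap in (3)$\Rightarrow$(2).} There you pass silently from ``finitely many indecomposables up to $[1]$'' to ``finitely many up to degree shift''. The same bookkeeping fixes this. By (i) and (ii), $(1)$ acts as $[\pm1]$ composed with an exact autoequivalence, and that autoequivalence permutes the finitely many simple $C(A)$-modules. Hence every indecomposable object of $\underline{\tt CM}^{\Z}(\bar A)$ is a degree shift of an object corresponding to a simple $C(A)$-module.
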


He-Ye, \cite{He-Ye}, studied the even Clifford algebra $C(A)$ in the case where $A$ is a noetherian Koszul Artin-Schelter regular algebra and $f \in A_2$ is a \emph{central} regular element. They proved that $C(A)$ is isomorphic to the degree $0$ part of a certain \emph{Clifford deformation} of the Koszul dual algebra $A^!$. 

Before discussing the main contributions of this paper, we draw attention to two observations regarding the above results. First, notice that the domains of all of the above equivalence functors are either stable categories of MCMs or categories of noncommmutative matrix factorizations. If one desires to actually construct MCMs or noncommutative matrix factorizations one could, in theory, write down quasi-inverses of these functors, however, this does not seem to be so easy to do. Second, most of the above results, at a minimum, require $A$ to be a Koszul algebra as input. 

In this paper, we work in greater generality and consider any quadratic algebra $A$ and any nonzero element $f \in A_2$. We refer to such a pair $(A, f)$ as a \emph{noncommutative quadratic quadric hypersurfaces}. We associate to such $(A, f)$ a canonical curved differential graded algebra $T^{\bullet}$. One of our main results, see Theorem \ref{Section 4 main} below, constructs a canonical faithful functor from the graded module category over $\bar{A}^!$ to the homotopy category of curved differential graded modules over the curved differential graded algebra $T^{\bullet}$. We show that the restriction of our functor to a natural full subcategory produces graded noncommutative matrix factorizations of $f$ over $A$ in the sense of \cite{Mori-Ueyama-1}. We also generalize the above mentioned result in \cite{He-Ye} regarding the even Clifford algebra. We show that when $A$ is a noetherian Koszul algebra of finite global dimension and $f \in A_2$ is normal and regular, then the even Clifford algebra is isomorphic to a canonical PBW-deformation of a Zhang twist of the $2$-Veronese subalgebra of the quadratic dual algebra $A^!$, see Theorem \ref{Cl(A, f) as PBW deformation}.

\subsection{Outline and main results}

In Section 2 we state preliminary results and establish notation. Most of Section 2 is standard and there are no new results, with the exception of, perhaps, Proposition \ref{easy CDGs} and Proposition \ref{easy CDG modules} where we give a general construction of some curved differential graded algebras and curved modules over them.

In Section 3 we consider quadratic or Koszul algebras $A$ along with quadratic elements
$f \in A_2$. We call such pairs $(A, f)$ \emph{noncommutative quadratic or Koszul quadric hypersurfaces}. If $A^!$ denotes the quadratic dual of $A$, then it is well known that there is a canonical quadratic element $f^! \in (A/\la f \ra)^!$, unique up to a scalar, such that $(A/ \la f \ra)^! \cong A^!$. We make the correspondence $(A, f) \to (A^!, f^!)$ into a functor by defining categories of noncommutative quadratic or Koszul quadric hypersurfaces. The main result of Section 3 is the following. The reader may refer to Definition \ref{category NCQuadQHypSurf} and Definition \ref{category NCKosHypSurf} for the unexplained notation.

\begin{theorem}[Theorem \ref{NCQuadQHypSurf is category with duality}, Corollary \ref{NCKosHypSurf is category with duality}]\label{Section 3 main}
The following triples are categories with duality
\begin{itemize}
\item[(1)] $({\tt Quad\text{-}QHS}, ( \ )^!, \eta)$,
\item[(2)] $(\tt{Kos\text{-}QHS}, ( \ )^!, \eta)$.
\end{itemize}

\end{theorem}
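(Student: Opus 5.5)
Since Definitions \ref{category NCQuadQHypSurf} and \ref{category NCKosHypSurf} are not yet visible, I will work with the natural guesses. Objects of ${\tt Quad\text{-}QHS}$ are pairs $(A,f)$ with $A = T(V)/\la R\ra$ quadratic ($V$ finite dimensional) and $0\neq f\in A_2$. Morphisms $(A,f)\to(B,g)$ are graded algebra maps $\phi$, determined by a linear map $\phi_1: V_A\to V_B$, with $\phi(f)\in \k^\times g$. The dual of $(A,f)$ is $(\bar A^!, f^!)$, where $\bar A = A/\la f\ra$. Writing $\bar R = R + \k\tilde f$ for a lift $\tilde f\in V\tsr V$ of $f$, we have $\bar A^! = T(V^*)/\la \bar R^\perp\ra$. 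Since $\bar R^\perp\subset R^\perp$ has codimension one, choose $f^!\in R^\perp$ spanning $R^\perp/\bar R^\perp$ and view it in $(\bar A^!)_2$. It is nonzero there, and $\bar A^!/\la f^!\ra = A^!$.

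A category with duality means a contravariant functor $D=(\ )^!$ together with a natural isomorphism $\eta:\id\to D^2$ satisfying $D(\eta_X)\circ\eta_{DX}=\id_{DX}$. The plan has four steps.

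\textbf{Step 1: the dual pair is an object.} $\bar A^!$ is quadratic. The element $f^!$ is nonzero in degree $2$ because $f^!\notin \bar R^\perp$. It is well defined only up to a scalar and modulo $\bar R^\perp$. I will either fix it by choosing the class pairing to $1$ with $\tilde f$ (using $R^\perp/\bar R^\perp \cong (\bar R/R)^*$), or, if morphisms allow rescaling of $f$, accept that the ambiguity becomes an isomorphism.

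\textbf{Step 2: functoriality.} Given $\phi:(A,f)\to(B,g)$, take the dual map $\phi_1^*:V_B^*\to V_A^*$. Since $\phi_1^{\tsr2}(R_A)\subset R_B$ and $\phi_1^{\tsr2}(\tilde f)\in \lambda\tilde g + R_B$, we get $\phi_1^{\tsr2}(\bar R_A)\subset \bar R_B$. Dualizing gives $(\phi_1^*)^{\tsr2}(\bar R_B^\perp)\subset \bar R_A^\perp$, so $\phi^*$ induces $\bar B^!\to\bar A^!$. The subtle point is $\phi^!(g^!)\in\k^\times f^!$. We know $(\phi_1^*)^{\tsr2}(R_B^\perp)\subset R_A^\perp$, and pairing gives $\la (\phi_1^*)^{\tsr 2} g^!,\tilde f\ra = \la g^!,\phi_1^{\tsr2}\tilde f\ra=\lambda$. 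Hence the image of $g^!$ equals $\lambda f^!$ modulo $\bar R_A^\perp$, which is a nonzero multiple exactly when $\lambda\neq0$. Composition and identities are preserved because $(\psi\phi)_1^*=\phi_1^*\psi_1^*$.

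\textbf{Step 3: the unit $\eta$.} Compute the double dual. The space of generators is $V^{**}\cong V$, and the relations of $\overline{\bar A^!}{}^{\,!}$ are $(\bar R^\perp + \k f^!)^\perp = (R^\perp)^\perp = R$. So $\overline{\bar A^!}{}^{\,!}\cong A$ via the canonical evaluation isomorphism $\iota:V\to V^{**}$. The dual element $(f^!)^!$ is the class in $\bar R/R$ pairing to $1$ with $f^!$, which is $\tilde f$, i.e.\ $f$ (up to the chosen scalar convention). Define $\eta_{(A,f)}$ as the map induced by $\iota$. Naturality reduces to $\phi_1^{**}\iota=\iota\phi_1$. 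The triangle identity reduces to the standard identity $\iota_{V^*}^{\,*}\circ\iota_{V^{**}}\ldots$, namely $(\iota_V)^*\circ\iota_{V^*}=\id_{V^*}$, applied degreewise.

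\textbf{Step 4: the Koszul case.} Corollary \ref{NCKosHypSurf is category with duality} follows once ${\tt Kos\text{-}QHS}$ is shown to be a full subcategory closed under $(\ )^!$. I expect its objects require $A$ and $\bar A$ Koszul, or similar. Since $\bar A^!$ is Koszul iff $\bar A$ is, and the double dual recovers $A$ and $\bar A$, the required conditions are symmetric. The closure check, for whatever exact Koszul condition the definition imposes, is the main potential obstacle. The scalar bookkeeping for $f^!$ in Steps 1 and 2 is the other delicate point: it must hold up in the definition so that $\eta$ is genuinely natural and is not merely natural up to scalar.
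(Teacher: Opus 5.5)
\textbf{Part (1).} Your argument is essentially the paper's proof. You fix $f^!$ as the class in $R^\perp/\bar R^\perp$ that pairs to $1$ with a lift $F$ of $f$; this is the paper's $\psi_{(A,f)}^{-1}(\phi_{(A,f)})$. You then use the pairing computation to get $\bar\varphi^!(g^!)=f^!$. The identity $\bar R^\perp+\k F^!=R^\perp$ gives $\overline{\bar A^!}=A^!$ and identifies $(f^!)^!$ with $f$ under $A\cong (A^!)^!$. Finally you reduce the triangle identity to $(\iota_V)^*\circ\iota_{V^*}=\id_{V^*}$. In the paper, morphisms satisfy $\varphi(f)=g$ exactly, so your $\lambda$ is $1$. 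A morphism is just an algebra map with a property, so naturality of $\eta$ is inherited from ${\tt Quad}$, and your worry about naturality ``up to scalar'' does not arise.

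\textbf{Part (2) has a genuine gap.} ${\tt Kos}\text{-}{\tt QHS}$ is the full subcategory of pairs $(A,f)$ with $A$ Koszul and $f\in A_2$ \emph{normal and regular}. Koszulity of $\bar A$ is not assumed. Your reduction to closure of objects under $(\ )^!$ is correct: fullness then gives you the morphisms, the components of $\eta$, and the triangle identity. But your symmetry argument does not establish closure. You need two facts.
\begin{enumerate}
\item[(a)] $\bar A^!$ is Koszul, i.e.\ $\bar A=A/\la f\ra$ is Koszul. This is a theorem, not a formal consequence of $A$ being Koszul: the quotient of a Koszul algebra by a normal regular quadratic element is Koszul \cite[Theorem 1.2]{Shelton-Tingey}.
\item[(b)] $f^!$ is normal and regular in $\bar A^!$.
\end{enumerate}
Part of (b) is formal. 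By \cite[Lemma 1.3]{Shelton-Tingey}, $f$ is normal iff $f^!$ is $1$-regular. Applying the same lemma to $(\bar A^!,f^!)$ and using $\eta$ to identify $(f^!)^!$ with $f$ shows that $f^!$ is normal, because $f$ is $1$-regular. However, regularity of $f^!$ in all degrees genuinely uses the Koszul hypothesis; this is \cite[Corollary 1.4]{Shelton-Tingey}, i.e.\ Proposition~\ref{normal, regular, Koszul case}. Without (a) and (b), the claim that $(\ )^!$ restricts to ${\tt Kos}\text{-}{\tt QHS}$ is unproved, and so is statement (2).
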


In Section 4 we associate to any noncommutative quadratic quadric hypersurface $(A, f)$ a canonical curved differential graded algebra $(T, d_T, \theta_T)$. Furthermore, under some additional conditions on the pair $(A, f)$, this association is strongly related to the notion of graded left matrix factorizations of $f$ in the sense of \cite{Mori-Ueyama-1}. The main result of Section 4 is the following. For some of the unexplained notation, the reader may refer to Proposition \ref{T^bullet CDG} and Definition \ref{category I}.

\begin{theorem}[Theorem \ref{the functor F}, Theorem \ref{F functor valued in NMF}, Proposition \ref{F functor valued in stable NMF}]\label{Section 4 main} 

Suppose that $(A, f) \in {\tt Quad}\text{-}{\tt QHS}$ and consider the curved DG algebra $T^{\bullet} = (A \tsr \bar{A}^!, d_T, f \tsr f^!)$.

\begin{itemize}
\item[(1)] There is a faithful functor ${\mc F}: {\tt GrMod}\text{-}\bar{A}^! \to {\tt Ho}({\tt CDGMod}\text{-}T^{\bullet})$.
\item[(2)] If $A$ satisfies the left strong rank condition and $\p_f: A \to A$ is injective, then there is a faithful functor ${\mc F}: {\tt P}(\bar{A}^!, f^!) \to {\underline {\tt NMF}}_{\Z}^A(f)$. 

\end{itemize}

\end{theorem}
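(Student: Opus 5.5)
Our plan is to build ${\mc F}$ from a Koszul-type twisting element. Write $A = T(V)/\la R\ra$, let $f\in A_2$ be nonzero, and set $\bar{A}=A/\la f\ra$. Then $\bar{A}^! = T(V^*)/\la R'^{\perp}\ra$, where $R' = R + \k\tilde f$ for a lift $\tilde f\in V\tsr V$. Fix a basis $x_1,\dots,x_n$ of $V$ with dual basis $x_1^*,\dots,x_n^*$ of $V^*$, and put
\[
e=\sum_i x_i\tsr x_i^*\in A_1\tsr \bar{A}^!_1 .
\]
We expect the differential on $T=A\tsr\bar{A}^!$ (Proposition \ref{T^bullet CDG}) to be commutator with $e$, or left multiplication by $e$, up to sign. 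The key identity is
\[
e^2=\sum_{i,j}x_ix_j\tsr x_i^*x_j^* = f\tsr f^! .
\]
Here $\sum_{i,j} x_ix_j\tsr x_i^*x_j^*$ is the canonical element of $V^{\tsr2}\tsr (V^*)^{\tsr 2}$. Its image in $A_2\tsr\bar{A}^!_2$ is computed using $A_2=V^{\tsr2}/R$ and $\bar{A}^!_2=(V^*)^{\tsr2}/R'^{\perp}\cong (R')^*$. Since $R'/R$ is spanned by $\tilde f$, the image lies in $(R'/R)\tsr (R'/R)^*$, and it equals $f\tsr f^!$ once $f^!$ is normalized as the functional dual to $\tilde f$. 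Everything in the paper's Section 3 normalization should match this.

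For part (1), given a graded right $\bar{A}^!$-module $M$, we set ${\mc F}(M)=A\tsr M$ with the grading induced from Section 2. The module structure is the one produced by Proposition \ref{easy CDG modules}, with curved differential $d_M(a\tsr m)=\sum_i a x_i\tsr m x_i^*$, i.e. right action of $e$. Then $d_M^2$ is the action of $e^2=f\tsr f^!$, which is exactly the curvature condition, and the Leibniz rule follows from the construction of $d_T$. A degree-zero map $\phi: M\to N$ goes to $\id_A\tsr\phi$, which commutes with $d$ because $\phi$ is $\bar{A}^!$-linear. Functoriality is then clear.

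Faithfulness in ${\tt Ho}$ is the real point, since we must show that if $\id\tsr\phi$ is null-homotopic then $\phi=0$. Given a homotopy $h$ with $\id\tsr\phi=d_Nh+hd_M$, we would apply the augmentation $\varepsilon: A\to\k$ and restrict to $1\tsr M$. Since $d$ raises $A$-degree by one, the terms $d_Nh$ and $hd_M$, composed with $\varepsilon\tsr\id$ and restricted to the $A_0$-component, should vanish. Indeed, $hd_M(1\tsr m)$ lands in $A_{\ge1}\tsr N$, because $h$ is $A$-linear of internal degree $0$ and therefore cannot lower $A$-degree. Similarly, $d_Nh(1\tsr m)$ lands in $A_{\ge 1}\tsr N$. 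The $A_0\tsr N$ component then gives $\phi(m)=0$. The main obstacle is pinning down the precise bigrading conventions, so that $h$ genuinely preserves or raises $A$-degree. We would use the internal $\Z$-grading of the graded CDG modules to force this.

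For part (2), ${\tt P}(\bar{A}^!,f^!)$ is the subcategory on which ${\mc F}(M)$ splits as $A\tsr M_{\rm even}\oplus A\tsr M_{\rm odd}$ into finitely generated free pieces. We define $\phi=d|_{\rm even}$ and $\psi=d|_{\rm odd}$; then $\psi\phi$ and $\phi\psi$ are multiplication by $f$ twisted by $f^!$. Here $f^!$ acts invertibly or by a fixed shift on such $M$, which is exactly what Definition \ref{category I} should impose. Thus we obtain a graded left matrix factorization in the sense of \cite{Mori-Ueyama-1}. The left strong rank condition together with injectivity of $\p_f$ gives that the free modules $F$ and $G$ have equal finite rank and that $\phi$ and $\psi$ are injective, as required in Mori-Ueyama's definition. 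We would then check that homotopies in ${\tt Ho}$ correspond to the null-homotopies in ${\underline{\tt NMF}}^A_{\Z}(f)$, so the functor descends. Faithfulness follows by the same augmentation argument as in part (1), now arguing that morphisms factoring through trivial factorizations $(1,f)$ or $(f,1)$ induce zero after reduction modulo $A_{\ge1}$ on the $f^!$-torsion-free parts.
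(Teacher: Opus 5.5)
\textbf{Gap in part (2): you never produce an object of ${\tt NMF}^A_{\Z}(f)$.} By Definition \ref{category of matrix factorizations}, an object is a $\Z$-indexed sequence $\{\phi_i : F_i \to F_{i+1}\}$ of degree-$0$ maps of graded free left $A$-modules, all of one finite rank $r$, with isomorphisms $\nu_i$ under which $\phi_{i+1}\phi_i$ becomes exactly $\p_f$. It is not a $\Z/2$-folded pair $F \rightleftarrows G$. Your $A \tsr M_{\rm even} \rightleftarrows A \tsr M_{\rm odd}$ fails on two counts.
First, for $0 \ne M \in {\tt P}(\bar{A}^!, f^!)$ the isomorphisms $M_i \cong M_{i+2}$ force $M_i \neq 0$ for infinitely many $i$, so these free modules have infinite rank.
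Second, the composite is $\p_{f \tsr f^!}$, which you leave ``twisted by $f^!$''.
Folding along $\p_{f^!}$ onto $A \tsr M_0$ and $A \tsr M_1$ does not repair this. The resulting maps are not $2$-periodic in general, since $f^!$ need not be central and in (2) $f$ is not even assumed normal. See Example \ref{quantum plane}, Case 1, where $\phi_i$ involves $q^i$.

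The missing idea is the paper's: keep the $\Z$-indexing and take $F_i = \mc T(M)_i = A(i) \tsr M_i$. The shift $A(i)$ is what makes $\phi_i = \p_b$ a degree-$0$ map of graded left $A$-modules. Then use the left $A$-bases $\{1 \tsr \p_{f^!}^{\s(i)} m_{k,j}\}$ transported from fixed bases of $M_0$ and $M_1$. Since $(a \tsr \p_{f^!}^{\s(i)} m)(f \tsr f^!) = af \tsr \p_{f^!}^{\s(i+2)} m$, in these bases $\phi_{i+1}\phi_i = \p_{f \tsr f^!}$ is literally $\p_f$. This is exactly where bijectivity of $\p_{f^!}\colon M_i \to M_{i+2}$ is used. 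Injectivity of $\p_f$ then makes every $\phi_i$ injective, and the left strong rank condition gives $\dim M_0 = \dim M_1$, as the common rank $r$ requires. Injectivity of the $\phi_i$ is not itself part of the definition.

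\textbf{The rest is sound or salvageable.}

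In part (1) the obstacle you flag is illusory. There is also no internal $A$-grading in ${\tt CDGMod}\text{-}T^{\bullet}$ to appeal to, since $T_n = A \tsr \bar{A}^!_n$. You do not need one: $(1 \tsr m)b = \sum_k (1 \tsr m\l_k)(x_k \tsr 1)$, so right $T$-linearity of $h$ alone puts both $h d_M(1 \tsr m)$ and $d_N h(1 \tsr m)$ in $A_{\ge 1} \tsr N$.
The paper's argument is shorter still. For modules of the form $(N, d_{N,b})$ one has $d_{N'}\s + \s d_N = 0$ for every $T$-linear $\s$ of degree $-1$. So there are no nonzero null-homotopic maps, and faithfulness reduces to that of $\psi \mapsto \id \tsr \psi$.
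Separately, your $d_M$ drops the sign $(-1)^{|m|+1}$ of Proposition \ref{easy CDG modules}. Without it $d_M^2 = +\p_{\theta_T}$ and the graded Leibniz rule with $d_T$ fails.

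For faithfulness in part (2), your reduction modulo $A_{\ge 1}$ does work once the objects are the correct $\Z$-indexed ones. It is a clean alternative to the paper's degree argument. Suppose $\mc F(\psi) = \iota\pi$ factors through $\phi^F$. For even $j$ one has $\pi_j = \pi_{j+1}\p_b$ and $\iota_{j+1} = \p_b\iota_j$. Hence, by left $A$-linearity, both $\iota_j\pi_j = \iota_j\pi_{j+1}\p_b$ and $\iota_{j+1}\pi_{j+1} = \p_b\iota_j\pi_{j+1}$ send $1 \tsr m$ into $A_{\ge 1} \tsr M'$. This forces $\psi = 0$, without even using injectivity of $\p_b$; the case ${}^G\phi$ is symmetric, and sums follow.
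As written, though, your argument is phrased via ``$(1,f)$ or $(f,1)$'' and ``$f^!$-torsion-free parts'', so it refers to the wrong category. Also, no descent from ${\tt Ho}$ is needed: ${\underline{\tt NMF}}^A_{\Z}(f)$ is the quotient by maps factoring through ${\tt H}$, and $\mc F$ is defined on ${\tt P}(\bar{A}^!, f^!)$ directly.
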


In Section 5 we restrict attention to noetherian Koszul algebras $A$ of finite global dimension and normal regular elements $f \in A_2$. By \cite[Lemma 4.2, Proposition 4.6]{Mori-Ueyama-2} the noncommutative projective scheme ${\tt Tails} \, \bar{A}^!$ is equivalent to the category of right modules over the \emph{even Clifford algebra} ${\mc C}(A, f) = \bar{A}^![(f^!)^{-1}]_0$. Motivated by this, we have the following main result of Section 5. 

\begin{theorem}[Theorem \ref{NMFs from reps}, Theorem \ref{Cl(A, f) as PBW deformation}]\label{Section 5 main} 

Let $A$ be a noetherian Koszul algebra of finite global dimension, and let $(A, f) \in {\tt Kos}\text{-}{\tt QHS}$. 

\begin{itemize}
\item[(1)] There exists a faithful functor ${\mc F}:{\tt fdim}\text{-} \mc{C}(A, f) \to \underline{{\tt NMF}}_{\Z}^A(f)$. 
\item[(2)] The even Clifford algebra ${\mc C}(A, f)$ is isomorphic to a PBW-deformation of the Zhang twist algebra $[A^!_{(2)}]^{\Phi}$.
\end{itemize}

\end{theorem}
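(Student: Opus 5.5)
The proof splits along the two parts of the statement. For part (1) I would compose functors already available. By \cite[Lemma 4.2, Proposition 4.6]{Mori-Ueyama-2}, ${\tt Tails}\,\bar{A}^!$ is equivalent to ${\tt Mod}\text{-}\mc{C}(A,f)$, and the relevant inverse direction is explicit: a right $\mc{C}(A,f)$-module $N$ goes to the induced graded module $N \tsr_{\mc{C}(A,f)} \bar{A}^![(f^!)^{-1}]$. Since $\bar{A}^![(f^!)^{-1}]$ is strongly graded, with $f^!$ a homogeneous unit of degree $2$, this induced module is controlled by its degree-$0$ and degree-$1$ pieces, namely $N$ and $N \tsr_{\mc{C}} \bar{A}^![(f^!)^{-1}]_1$. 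I would take the truncation $M_{\geq 0}$, regarded as a graded $\bar{A}^!$-module. When $N$ is finite dimensional, $M$ is finite dimensional in each degree and right multiplication by $f^!$ on $M_{\ge 0}$ is injective, indeed bijective onto degrees $\ge 2$. The point to verify is that this truncation lies in the full subcategory ${\tt P}(\bar{A}^!, f^!)$ of Definition \ref{category I}. I expect that subcategory to be cut out by finiteness of the graded pieces together with a condition that $f^!$ acts injectively, or periodically, in high degrees. Once this is checked, I would compose with the functor of Theorem \ref{Section 4 main}(2).

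The hypotheses of Theorem \ref{Section 4 main}(2) must also be confirmed. A noetherian Koszul algebra of finite global dimension is a domain-like setting where the left strong rank condition holds; noetherian rings satisfy it. In addition, $\p_f$ is injective because $f$ is regular. Faithfulness of the composite then follows from three facts: faithfulness of $\mc F$, faithfulness of the truncated induction functor, and the fact that morphisms of $\mc{C}$-modules are recovered in degree $0$.

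For part (2) I would make the Zhang twist concrete. Since $f$ is normal, there is a graded automorphism $\s$ with $af = f\s(a)$. This dualizes to an automorphism of $A^!$, and hence to a twisting system $\Phi$ on the Veronese $A^!_{(2)}$, so that $f^!$ becomes normal in $\bar{A}^!$. I would then map $\bar{A}^!_2$ into $\mc{C}(A,f)$ by $x \mapsto x (f^!)^{-1}$. Since $f^! u = \s^!(u) f^!$, the image elements multiply as $x(f^!)^{-1} y (f^!)^{-1} = x (\s^!)^{-1}(y)(f^!)^{-2}$, up to the chosen conventions. This shows that the quadratic relations of $[A^!_{(2)}]^{\Phi}$ hold modulo the relation $f^! \mapsto 1$.

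Next I would identify $\mc{C}(A,f)$ with $\bar A^!_{(2)}/\la f^!-1\ra$, twisted. Using $(A/\la f\ra)^! \cong \bar A^!$, the Veronese of $\bar A^!$ is a central-type extension of that of $A^!$ by $f^!$, and setting $f^!=1$ produces a filtered algebra whose associated graded is a quotient of $[A^!_{(2)}]^{\Phi}$. The remaining step is to show that this surjection from the twisted Veronese onto the associated graded is an isomorphism, which is exactly the PBW property. This is the main obstacle. I would attack it with the Braverman–Gaitsgory / Positselski PBW criterion: $A^!$ is Koszul, since $A$ is, so its Veronese and any Zhang twist of it are Koszul, and it then suffices to verify the Jacobi-type conditions on the deformed relations. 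Alternatively, one can compare Hilbert series. Regularity of $f$ gives that $f^!$ is regular in $\bar A^!$, and this yields $\dim \mc{C}(A,f)_{\le k}$ by dehomogenization, to be matched against the Hilbert series of $A^!_{(2)}$.
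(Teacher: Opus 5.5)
\textbf{Part (1) has a genuine gap: the truncation step.} Definition~\ref{category I} requires $\p_{f^!}\colon M_i \to M_{i+2}$ to be an isomorphism for \emph{every} $i \in \Z$, not only in high degrees. Your $M_{\geq 0}$ violates this already at $i=-2$, since $\p_{f^!}\colon M_{-2}=0 \to M_0 = N$ is not surjective once $N \neq 0$.

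This is more than a definitional mismatch. The construction in Theorem~\ref{F functor valued in NMF} uses the condition in all degrees to make every ${\mc T}(M)_i = A(i)\tsr M_i$ free of one fixed rank $r$, as Definition~\ref{category of matrix factorizations} demands. For your truncation, ${\mc T}(M)_i = 0$ for $i<0$, while ${\mc T}(M)_0$ is free of rank $\dim N$. So ${\mc F}(M_{\geq 0})$ is not an object of ${\tt NMF}_{\Z}^A(f)$, and there is nothing to compose with.

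The fix is simply not to truncate. Keep the full $\Z$-graded module $\widehat{N} = \bigoplus_{i \in \Z} N \tsr_{\mc{C}(A,f)} {\mc Cl}(A,f)_i$. On it, $\p_{f^!}$ is bijective in every degree because $f^!$ is a unit of degree $2$ in ${\mc Cl}(A,f)$. Its pieces are finite-dimensional by Proposition~\ref{Cl(A,f) finite-dimensional}(1). Hence $\widehat{N} \in {\tt P}(\bar{A}^!, f^!)$, which is the paper's Proposition~\ref{hat N to factorization}.

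Faithfulness then follows from three facts:
\begin{itemize}
\item $N \mapsto \widehat{N}$ is an equivalence ${\tt Mod}\text{-}\mc{C}(A,f) \to {\tt GrMod}\text{-}{\mc Cl}(A,f)$, because ${\mc Cl}(A,f)$ is strongly graded;
\item restriction of scalars to $\bar{A}^!$ is faithful;
\item Proposition~\ref{F functor valued in stable NMF} gives faithfulness into the stable category.
\end{itemize}
The detour through the ${\tt Tails}$ equivalence of Mori--Ueyama is unnecessary. Your checks of the left strong rank condition and of injectivity of $\p_f$ are fine.

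\textbf{Part (2) is sound along the Hilbert-series route, which is essentially the paper's argument; drop the Jacobi-condition route.} The paper lifts each quadratic relation of $A^!_{(2)}$ to $\bar{A}^!$ as $\sum \lambda_{i,j,k} g_ig_j = b_kf^!$. Dehomogenizing gives an explicit $Q(W,P)$ with a surjection $\delta\colon Q(W,P)\to\mc{C}(A,f)$. It then compares total dimensions: $\dim Q(W,P) \le \dim Q(W,R) = \dim A^!_{(2)} = \dim \mc{C}(A,f) \le \dim Q(W,P)$, where the middle equality is Proposition~\ref{Cl(A,f) finite-dimensional}(2).

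Your filtration-wise version also works. Take $F^k = \bar{A}^!_{2k}(f^!)^{-k}$; then regularity of $f^!$ gives $\dim F^k = \dim \bar{A}^!_{2k} = \sum_{j\le k}\dim A^!_{2j}$. This yields the same conclusion degree by degree, without needing finite-dimensionality. You still need to write down the nonhomogeneous presentation $Q(W,P)$ with $\pi_2(P)=R$, and observe that the composite $Q(W,R)\to {\rm gr}\,Q(W,P)\to{\rm gr}\,\mc{C}(A,f)$ is an isomorphism; this forces $\delta$ to be an isomorphism.

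Finally, normality and regularity of $f^!$ come from Proposition~\ref{normal, regular, Koszul case} (Shelton--Tingey), not from any twisting. $\Phi$ is just the normalizing automorphism of $f^!$ descended to $A^!$, and no twisting system is needed.
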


Finally, in Section 6 we apply our main results by studying three examples of Artin-Schelter regular algebras of global dimensions $2$, $3$ and $4$, and some of their quadratic normal regular elements. Using our constructions, we find interesting noncommutative matrix factorizations of these elements.

\section{Preliminaries}

In this section we gather together various standard facts we will need, as well as fixing notation. For the reader's convenience, we have divided the section into subsections with descriptive titles. Aside from perhaps Proposition \ref{easy CDGs} and Proposition \ref{easy CDG modules}, nothing in this section is new, so the reader may want to skip this section and refer back to it as necessary. Let $\k$ be a field.

\subsection{Vector spaces}\label{vector spaces}

Let ${\tt Vect}$ be the category of $\k$-vector spaces and $\k$-linear maps. The term \emph{vector space} refers to an object of ${\tt Vect}$. The tensor product of vector spaces $V$ and $W$ is denoted by $V\tsr W$ and $\Hom(V, W)$ is the vector space of $\k$-linear maps from $V$ to $W$. We write $V^* = \Hom(V, \k)$ for the $\k$-linear dual of the vector space $V$. If $V$ is a $\Z$-graded vector space and $v \in V$ is homogeneous, we write $|v|$ for the degree of $v$. When $V$ and $W$ are $\Z$-graded, 
the space $V\tsr W$ admits a $\Z$-grading via the formula
\[(V\tsr W)_m=\bigoplus_{k+\ell=m} V_k\tsr W_{\ell}.\]  

If $V$ is a finite-dimensional vector space and $n$ is a positive integer, consider the natural pairing between $V^{\tsr n}$ and $(V^*)^{\tsr n}$, \[\la \ , \ \ra: V^{\tsr n} \tsr (V^*)^{\tsr n} \to \k\] 
given by \[\la v_1 \tsr \cdots \tsr v_n, \l_1 \tsr \cdots \tsr \l_n\ra = \l_1(v_1) \cdots \l_n(v_n),\] 
where $\l_i \in V^*$ and $v_i \in V$ for all $i$. We will also denote the right hand side of the last equation either by $[\l_1 \tsr \cdots \tsr \l_n](v_1 \tsr \cdots \tsr v_n)$ or by $[v_1 \tsr \cdots \tsr v_n](\l_1 \tsr \cdots \tsr \l_n)$. If $I$ is a subspace of $V^{\tsr n}$ or $(V^*)^{\tsr n}$, then $I^{\perp}$ will denote the \emph{orthogonal complement} of $I$ in $(V^*)^{\tsr n}$ or $V^{\tsr n}$, respectively, under this pairing. 

Let $U, V, W \in {\tt Vect}$, and assume that $V$ and $W$ are finite-dimensional. It is well known that the map \[\Omega: V \tsr W \to \Hom(V^*, W),  \quad \big[\Omega\big(\sum_i v_i \tsr w_i\big)\big](\l) = \sum_i \l(v_i)w_i\] is an isomorphism in ${\tt Vect}$. Let \[\Omega_{*}: \Hom(U, V \tsr W) \to \Hom(U, \Hom(V^*, W))\] 
be the pushforward isomorphism associated to $\Omega$.  Let \[\Psi: \Hom(U \tsr V^*, W) \to \Hom(U, \Hom(V^*, W))\] 
be the canonical adjoint isomorphism. Let $\Delta = \Omega_{*}^{-1} \circ \Psi$, then we have the commutative diagram 
\[\xymatrix{&\Hom(U \tsr V^*, W) \ar[rr]^-{\Psi} \ar@/_1pc/[rrd]_-{\Delta} & &  \Hom(U, \Hom(V^*, W)) \\ & &  &\Hom(U, V \tsr W). \ar[u]^-{\Omega_{*}}
}
\]

We will need the following explicit description of the map $\Delta$. Let $\{x_k\} \subset V$ and $\{\lambda_k\} \subset V^*$ be a pair of dual bases. For all $\p \in \Hom(U \tsr V^*, W)$, the map $\Delta(\p)$ is given by 

\begin{equation}
\label{Delta}
 \Delta(\p)(u) = \sum_{k} x_k \tsr \p(u \tsr \l_k) \quad \text{for all } u \in U.
\end{equation}

To see this, note that it suffices to check that \[\Psi(\p)(u) = \Omega\big(\sum_k x_k \tsr \p(u \tsr \l_k)\big) \quad \text{for all } u \in U,\] 
and this follows by evaluating each side on the $\l_i$ and unwinding definitions. We will refer to the map $\Delta(\p)$ as the \emph{adjoint of $\p$}. 

\subsection{Associative algebras}\label{algebras}

In this paper, the term \emph{algebra} refers to an associative $\k$-algebra, and \emph{graded algebra} refers to a $\Z$-graded, locally finite-dimensional, associative $\k$-algebra. A \emph{connected graded algebra} is an $\N$-graded algebra $A$ such that $A_0 = \k$. The \emph{tensor algebra} on the vector space $V$ is denoted by $T(V)$. We always consider $T(V)$ as an $\N$-graded algebra in the usual way; that is, $V$ is considered as the degree $1$ component of $T(V)$. If $A$ is a graded algebra and $d \in \N$, the \emph{$d$-Veronese} of $A$ is the subalgebra \[A_{(d)} = \bigoplus_{i \in \Z} A_{di}\] of $A$. Let ${\tt GrAlg}$ denote the category of graded algebras with degree-preserving algebra morphisms. We say that a graded algebra $A$ is \emph{one-generated} if the natural map $\pi_A: T(A_1) \to A$ is surjective. Let ${\tt GrAlg}^1$ denote the full subcategory of ${\tt GrAlg}$ consisting of all one-generated algebras. For any morphism $\varphi:A\to B$ in ${\tt GrAlg}^1$, there is the induced morphism $\varphi: T(A_1)\to T(B_1)$, also in ${\tt GrAlg}^1$. Moreover, it is clear that $\varphi(\ker \pi_A)\subset \ker \pi_B$. In fact, the association $A \to T(A_1)$ yields a functor $T: {\tt GrAlg}^1 \to {\tt GrAlg}^1$. For the purposes of this paper, there is no harm or loss in generality in assuming that the objects in ${\tt GrAlg}^1$ are explicitly presented in the form $T(V)/I$ for some finite-dimensional vector space $V$ and some ideal $I$ of $T(V)$. Furthermore, a morphism $T(V)/I \to T(W)/J$ in ${\tt GrAlg}^1$ determines, and is determined by, a morphism $T(V) \to T(W)$  in ${\tt GrAlg}^1$ carrying $I$ into $J$.  An algebra $T(V)/I \in {\tt GrAlg}^1$ is a \emph{quadratic algebra} if the ideal $I \subset T(V)$ is generated by its degree $2$ component $I_2$. We let ${\tt Quad}$ denote the full subcategory of ${\tt GrAlg}^1$ consisting of all quadratic algebras. We will denote objects of ${\tt Quad}$ as $A = T(V)/\la I_A \ra$, with the implication that $I_A$ is a subspace of $V \tsr V$. 

If $A$ is a graded algebra and $f \in A$, then $f$ is \emph{normal} if $fA = Af$. The element $f \in A$ is \emph{regular} if $f$ is neither a left nor a right zero-divisor in $A$. We will say that $f \in A$ is \emph{1-regular} if both left and right multiplication by $f$ on $A_1$ is injective. If $f$ is both normal and regular, then the \emph{normalizing automorphism of $f$} is the algebra isomorphism $\varphi_f: A \to A$ determined by $fa = \varphi_f(a)f$ for all $a \in A$. When $A$ is one-generated and $f \in A$ is normal, regular and homogeneous, then the normalizing automorphism of $f$ is graded and is completely determined by its restriction to $A_1$.

Let $A = \bigoplus_{i \in \Z} A_i$ be a $\Z$-graded associative $\k$-algebra. For any graded algebra automorphism $\zeta$ of $A$, the \emph{Zhang twist} of the algebra $A$ by the automorphism $\zeta$ is the $\Z$-graded associative $\k$-algebra $A^{\zeta}$ given by $A^{\zeta} =  \bigoplus_{i \in \Z} A_i$ as $\Z$-graded vector spaces with multiplication given by \[x \ast y = x \zeta^n(y) \quad \text{for all } \,  x \in A_n, \, y \in A.\]
We refer to \cite{Zhang} for more details.  

\subsection{Modules}\label{modules}

If $A$ is an algebra, then $A\text{-}\tt{Mod}$ denotes the category of left $A$-modules. The categories $A\text{-}\tt{mod}$ and $A\text{-}\tt{fdim}$ are the full subcategories of $A\text{-}\tt{Mod}$ consisting of finitely generated modules, respectively, finite-dimensional modules. We use corresponding notation for right $A$-module categories: ${\tt{Mod}}\text{-}A$, ${\tt{mod}}\text{-}A$ and ${\tt{fdim}}\text{-}A$. If $M \in A\text{-}\tt{Mod}$ and $a \in A$, then $\l_a: M \to M$ denotes left multiplication by $a$. Similarly, if $M$ is a right $A$-module, then $\p_a: M \to M$ denotes right multiplication by $a \in A$. If $A$ is noetherian (meaning left and right noetherian), then the \emph{global dimension of $A$} is denoted by $\gldim A$. Recall that $A$ is said to satisfy the \emph{left strong rank condition} if whenever $A^m \to A^n$ is a monomorphism of finitely generated free left $A$-modules, then $m \leq n$; see \cite[Section 1.1]{Lam}. It is well known that every left noetherian algebra satisfies the left strong rank condition. 

If $A$ is a graded algebra, we denote by $A \text{-} {\tt GrMod}$ the category of $\Z$-graded left $A$-modules, with degree-preserving morphisms. We have the following full subcategories of $A \text{-} {\tt GrMod}$: $A\text{-}\tt{grmod}$ consists of all finitely generated graded modules, and $A\text{-}\tt{grfdim}$ consists of all finite-dimensional graded modules. For categories of graded right $A$-modules we use corresponding notation: ${\tt{GrMod}}\text{-}A$, ${\tt{grmod}}\text{-}A$ and ${\tt{grfdim}}\text{-}A$.

For $M\in A\text{-}{\tt GrMod}$ and $i\in \Z$, we denote by $M(i)$ the degree-shifted graded left $A$-module defined by $M(i)_n=M_{n+i}$ for all $n\in\Z$. The \emph{graded Hom functor} for graded left $A$-modules is given by
\[\Hom_A(M,N)=\bigoplus_{i\in\Z} \Hom_{A\text{-}{\tt GrMod}}(M,N(i)).\]
 This functor is left exact, and its $i$-th right derived functor is denoted $\Ext^i_A(M,N)$. The space $\Ext^i_A(M,N)$ inherits a grading from the graded Hom functor, and we denote the homogeneous degree $j$ component by $\Ext^{i,j}_A(M,N)$. One may compute the space $\Ext^i_A(M,N)$ as the homology group $H^i(P_{\bullet},N)$ where $(P_{\bullet},d_{\bullet})$ is a graded free resolution of $M$. Thus 
\[\Ext_A(M,N)=\bigoplus_i \Ext^i_A(M,N)=\bigoplus_{i,j} \Ext^{i,j}_A(M,N)\] is bigraded.
When $A$ is a connected graded algebra and $M=N= \k$ is the trivial module, the vector space $E(A)=\Ext_A(\k,\k)$ admits the structure of a bigraded algebra via the Yoneda composition product, see \cite[p. 4]{PP} for more details. 
The algebra $E(A)$ is called the \emph{Yoneda algebra} of $A$.

\subsection{Quadratic and Koszul algebras}\label{quadratic and Koszul}

Let $A = T(V)/\la I_A \ra$ be a quadratic algebra. Then $I_A^{\perp}$, the orthogonal complement of $I_A$, is a subspace of $V^* \tsr V^*$. The \emph{quadratic dual} of $A$ is the quadratic algebra $A^! := T(V^*)/\la I_A^{\perp} \ra$. If $A = T(V)/\la I_{A} \ra$ and $B = T(W)/\la I_{B} \ra$, and $\varphi: A \to B$ is a morphism in ${\tt Quad}$, then there is a natural morphism $\varphi^{!}: B^! \to A^!$ in ${\tt Quad}$ induced by the $\k$-linear dual map $\varphi^*: W^* \to V^*$, and of course $\varphi^* : T(W^*) \to T(V^*)$ carries $\la (I_B)^{\perp} \ra$ into $\la (I_A)^{\perp} \ra$.

Recall that a connected graded algebra $A$ is \emph{Koszul} if the trivial module $\k$ admits a resolution in $A\text{-}{\tt GrMod}$
\[\cdots\to P_3\to P_2\to P_1\to P_0 \to \k\to 0\]
such that each $P_i$ is a graded free left $A$-module generated in degree $i$. It is immediate from the definition that every Koszul algebra is quadratic.  
It is well known that $A$ is Koszul if and only if the canonical homomorphism $A^! \to E(A)$ is an algebra isomorphism. 

\subsection{Curved algebras and modules}

In this section we discuss the notions of curved differential graded algebras and modules. Following \cite{PP}, a \emph{curved differential graded algebra} (\emph{CDG-algebra}) is a triple $B^{\bullet} = (B, d_B, \theta_B)$ consisting of a $\Z$-graded, associative $\k$-algebra $B = \oplus_{i \in \Z} B_i$, an odd derivation of degree $1$ \[d_B: B \to B, \quad d_B(xy) = d_B(x)y + (-1)^{|x|}x d_B(y) \quad \text{for all homogeneous } x, y \in B,\] and an element \[\theta_B \in B_2 \ \ \text{such that} \ \ d_B(\theta_B) = 0 \ \ \text{and} \ \ d_B^2(x) = [\theta_B, x] = \theta_B x - x \theta_B \ \ \text{for all }  x \in B.\] The element $\theta_B$ is called the \emph{curvature element}. There is a simple way to produce examples of CDG-algebras.

\begin{proposition}\label{easy CDGs}
Let $B = \oplus_{i \in \Z} B_i$ be a $\Z$-graded, associative $\k$-algebra and let $b \in B_1$. Define $d_b: B \to B$ by $d_b(x) = bx - (-1)^{|x|}xb$ for all homogeneous $x \in B$.  Then the triple $(B, d_b, b^2)$ is a CDG-algebra.
\end{proposition}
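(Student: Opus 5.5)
We check directly the three defining conditions of a CDG-algebra for the triple $(B, d_b, b^2)$: that $d_b$ is an odd derivation of degree $1$, that $d_b(b^2) = 0$, and that $d_b^2(x) = [b^2, x]$ for all $x \in B$. The one observation driving everything is that $d_b$ is the graded commutator with the odd element $b$, $d_b(x) = [b, x]_{\rm gr} = bx - (-1)^{|x|}xb$. All three identities then reduce to the standard facts about graded commutators with an odd element, so we only need to verify them on homogeneous elements and extend by linearity.

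First, the degree. Since $b \in B_1$, both $bx$ and $xb$ lie in $B_{|x|+1}$, so $d_b$ is homogeneous of degree $1$. For the graded Leibniz rule, take homogeneous $x, y$ and expand
\[d_b(x)y + (-1)^{|x|}x\,d_b(y) = bxy - (-1)^{|x|}xby + (-1)^{|x|}xby - (-1)^{|x|+|y|}xyb.\]
The middle terms cancel, leaving $bxy - (-1)^{|xy|}xyb = d_b(xy)$, since $|xy| = |x| + |y|$.

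Next, $b^2 \in B_2$ and $d_b(b^2) = b\cdot b^2 - (-1)^2 b^2 \cdot b = 0$. For the curvature identity, apply $d_b$ twice to a homogeneous $x$, using that $d_b(x)$ has degree $|x|+1$:
\[d_b^2(x) = b(bx - (-1)^{|x|}xb) - (-1)^{|x|+1}(bx - (-1)^{|x|}xb)b.\]
This expands to
\[b^2x - (-1)^{|x|}bxb + (-1)^{|x|}bxb - xb^2 = b^2 x - x b^2 = [b^2, x].\]

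The only step requiring any care is tracking the sign $(-1)^{|x|+1}$ in the second application of $d_b$, which comes from the degree shift. With that sign handled correctly, the cross terms $bxb$ cancel and no real obstacle remains.
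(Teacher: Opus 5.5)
Your proof is correct and is essentially the paper's argument: a direct verification of the three conditions on homogeneous elements, using that $d_b$ is the graded commutator with $b$. The one difference is in computing $d_b^2(x)$. You apply the definition of $d_b$ to the homogeneous element $d_b(x)$ of degree $|x|+1$, while the paper expands $d_b(bx) - (-1)^{|x|}d_b(xb)$ via the Leibniz rule and $d_b(b)=2b^2$. Your route is slightly more direct, and you also write out the Leibniz rule, which the paper leaves as routine.
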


\begin{proof}
Let $c = b^2$. It is routine to check that $d_b$ is an odd derivation of degree $1$, and clearly $d_b(c) = 0$. We will check that $d_b^2(x) = [c, x]$. First note that $d_b(b) = 2b^2 = 2c$. For homogeneous $x \in B$ we have 
\begin{align*}
d_b^2(x) &= d_b(bx-(-1)^{|x|}xb) \\
&= d_b(bx) - (-1)^{|x|} d_b(xb) \\
&= d_b(b)x-bd_b(x)-(-1)^{|x|}(d_b(x)b + (-1)^{|x|}d_b(b)) \\
&= 2cx -b(bx-(-1)^{|x|}xb) - (-1)^{|x|}(bx-(-1)^{|x|}xb)b-2xc \\
&= [c, x].
\end{align*}
\end{proof}

A \emph{morphism of CDG-algebras} is $(f, c): (B, d_B, \theta_B) \to (C, d_C, \theta_C)$ such that $f: B \to C$ is a graded algebra morphism and $c \in C_1$, satisfying \begin{align*} f(d_B(x)) &= d_C(f(x)) + \{c, f(x)\} \\ 
 f(\theta_B) &= \theta_C+d_C(c) + c^2, \end{align*} where $\{x, y\} = xy-(-1)^{|x||y|} yx$. We note that the construction in Proposition \ref{easy CDGs} can be made into a functor. If $f: B \to C$ is a graded algebra morphism such that $f(b) = c$ for $b \in B_1$, then $(f, 0): (B, d_b, b^2) \to (C, d_c, c^2)$ is easily checked to be a morphism of CDG-algebras. It is clear that this correspondence between pairs $(B, b)$ and the CDG-algebra $(B, d_b, b^2)$ determines a faithful covariant functor. 
 
 Now we define curved modules over a CDG-algebra. A \emph{right curved differential graded module} (\emph{right CDG-module}) over the CDG-algebra $(B, d_B, \theta_B)$ is a pair $(N, d_N)$ consisting of a $\Z$-graded right $B$-module $N$ and an odd derivation of degree $1$, $d_N: N \to N$: \[d_N(nx) = d_N(n)x+(-1)^{|n|}nd_B(x) \quad \text{for all homogeneous } n \in N, x \in B,\] such that $d^2_N(n) = -n \theta_B$ for all $n \in N$. 
 
 The next result constructs CDG-modules over the CDG-algebras defined in Proposition \ref{easy CDGs}. The proof is straightforward and left to the reader. 
 
 \begin{proposition}\label{easy CDG modules}
 Let $B$ be a $\Z$-graded, associative $\k$-algebra, let $b \in B_1$, and consider the CDG-algebra $(B, d_b, b^2)$ defined in Proposition \ref{easy CDGs}. Let $N \in {\tt GrMod}\text{-}B$, and define a map $d_{N, b}: N \to N$ by $d_{N, b}(n) = (-1)^{|n|+1}nb$ for all homogeneous $n \in N$. Then the pair $(N, d_{N, b})$ is a right CDG-module over $(B, d_b, b^2)$.
 \end{proposition}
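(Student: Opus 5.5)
We need to check two things about $d_{N,b}$: that it is an odd derivation of degree $1$ compatible with $d_b$, and that $d_{N,b}^2(n) = -n b^2$. Both follow by direct computation from the definitions, in the same style as the proof of Proposition \ref{easy CDGs}.

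First, $d_{N,b}$ has degree $1$, since $b \in B_1$ and $N$ is a graded right $B$-module, so $nb \in N_{|n|+1}$. Next we verify the Leibniz rule. Let $n \in N$ and $x \in B$ be homogeneous, so that $|nx| = |n| + |x|$. The left side is
\[d_{N,b}(nx) = (-1)^{|n|+|x|+1} nxb.\]
For the right side we compute the two terms separately:
\[d_{N,b}(n)x = (-1)^{|n|+1} nbx,\]
\[(-1)^{|n|} n\, d_b(x) = (-1)^{|n|} n\big(bx - (-1)^{|x|} xb\big) = (-1)^{|n|} nbx + (-1)^{|n|+|x|+1} nxb.\]
The $nbx$ terms carry opposite signs $(-1)^{|n|+1}$ and $(-1)^{|n|}$, so they cancel. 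What remains is $(-1)^{|n|+|x|+1} nxb$, which is exactly $d_{N,b}(nx)$. Linearity then extends the identity to all of $N$ and $B$.

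Finally we check the curvature condition. For homogeneous $n$, the element $d_{N,b}(n)$ has degree $|n|+1$. Therefore
\[d_{N,b}^2(n) = (-1)^{|n|+2}\,\big((-1)^{|n|+1} nb\big) b = (-1)^{2|n|+3}\, n b^2 = -n b^2 = -n\theta.\]
Here $\theta = b^2$ is the curvature element of $(B, d_b, b^2)$, so this is the required identity $d_N^2(n) = -n\theta_B$.

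There is no genuine obstacle. The only care needed is in tracking the sign exponents, in particular using $|d_{N,b}(n)| = |n|+1$ in the second application of $d_{N,b}$.
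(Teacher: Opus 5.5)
Your verification is correct: in the Leibniz computation the $nbx$ terms cancel, leaving $(-1)^{|n|+|x|+1}nxb$, and the curvature computation $d_{N,b}^2(n) = -nb^2$ both check out. This is the same routine direct check that the paper leaves to the reader with ``This is straightforward.''
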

 
 \begin{proof}
 This is straightforward. 
 \end{proof}
 
 Let $(B, d_B, \theta_B)$ be a CDG-algebra. We now describe the \emph{DG category of right CDG-modules over $B^{\bullet} = (B, d_B, \theta_B)$}. This DG category will be notated as ${\tt CDGMod}\text{-}B^{\bullet}$. A basic general reference for DG categories is \cite[Chapter 3]{Y}. Let $(N_1, d_{N_1})$ and $(N_2, d_{N_2})$ be objects  in ${\tt CDGMod}\text{-}B^{\bullet}$. Since $N_1$ and $N_2$ are $\Z$-graded right $B$-modules we may consider the $\Z$-graded vector space \[\Hom_{B}(N_1, N_2) = \bigoplus_{i \in \Z} \Hom_{{\tt GrMod}\text{-}B}(N_1, N_2(i)).\] Then it is well known that $\Hom_{B}(N_1, N_2)$ is a DG vector space with degree $1$ differential $d: \Hom_{B}(N_1, N_2)_i \to \Hom_{B}(N_1, N_2)_{i+1}$ given by \[[d(\phi)](n_1) = d_{N_2} \phi(n_1) - (-1)^{i} \phi d_{N_1}(n_1) \quad \text{for all } \phi \in \Hom_{B}(N_1, N_2)_i, \ n_1 \in N_1.\] One can easily check that $d^2 = 0$. 
 
 We will denote the \emph{homotopy category of right CDG modules over $(B, d_B, \theta_B)$} by ${\tt{Ho}}({\tt{CDGMod}}\text{-}B^{\bullet})$. This is the category with the same objects as ${\tt CDGMod}\text{-}B^{\bullet}$ with morphism sets equal to $H^0(\Hom_B(N_1, N_2))$. In more detail, morphisms in ${\tt{Ho}}({\tt{CDGMod}}\text{-}B^{\bullet})$ are cochain maps of degree $0$, $\phi: N_1 \to N_2$, up to coboundaries of $d$. We will also write 
 \begin{align*} &Z^{0}(\Hom_B(N_1, N_2)) = \ker(d\colon \Hom_B(N_1, N_2)_0 \to \Hom_B(N_1, N_2)_1), \\ 
 &B^{0}(\Hom_B(N_1, N_2)) = \im(d\colon \Hom_B(N_1, N_2)_{-1} \to \Hom_B(N_1, N_2)_0).\end{align*}
 See \cite[Definition 3.4.6]{Y} for more details.

\section{Categories of noncommutative quadratic and Koszul quadric hypersurfaces} 

Given a quadratic algebra $A$ and $f \in A_2$, it is well known that there is a canonical quadratic element $f^! \in (A/\la f \ra)^!$, unique up to an element of $\k^*$, such that $(A/\la f \ra)^!/\la f^! \ra \cong A^!$, see \cite{PP, Shelton-Tingey}. In this section we make this correspondence more precise as a functorial construction. To do this, we introduce two categories and then prove that they are categories with duality.

\begin{definition}\label{category NCQuadQHypSurf}
Let ${\tt Quad\text{-}QHS}$ denote the category whose objects are pairs $(A, f)$, where $A$ is a quadratic algebra and $0 \ne f \in A_2$. By definition, a morphism in ${\tt Quad\text{-}QHS}$, say $(A, f) \to (B,g)$, is a morphism $\varphi: A \to B$ in ${\tt GrAlg}$ such that $\varphi(f) = g$. We call ${\tt Quad\text{-}QHS}$ the \emph{category of noncommutative quadratic quadric hypersurfaces}. 
\end{definition}

\begin{construction}\label{duality functor on NC-Quad-QHypSurf}
We construct a contravariant functor \[( \ ) ^{!}: {\tt Quad\text{-}QHS} \to {\tt Quad\text{-}QHS}\] 
as follows. 
First, we define $( \ ) ^{!}: {\tt Quad\text{-}QHS} \to {\tt Quad\text{-}QHS}$ on objects. Consider some pair $(A, f) \in {\tt Quad\text{-}QHS}$, where $A = T(V)/\la I_A \ra$ and $0 \ne f \in A_2$. Choose any $F \in V \tsr V$ such that $f= F + I_A$, and consider the subspace $I_A + \k F$ of $V \tsr V$. Note that this sum is direct and the subspace does not depend on the choice of $F$. We let $\bar{A} = T(V)/\la I_A + \k F \ra$, and define $I_{\bar{A}} = I_A + \k F$. The quadratic dual of $\bar{A}$ is $\bar{A}^! = T(V^*)/\la (I_{\bar{A}})^{\perp} \ra$. It is not hard to check that the map defined by
\begin{equation}
\label{psi_(A,f)}
\psi_{(A, f)}: \bar{A}^!_2 \to I_{\bar{A}}^*, \quad [\psi_{(A, f)}(t+(I_{\bar{A}})^{\perp})](r) = t(r)
\end{equation}
is a vector space isomorphism. Define the linear functional $\phi_{(A, f)} \in I_{\bar{A}}^*$ by 
\begin{equation}
\label{phiAf}
\phi_{(A, f)}(I_A) = 0 \ \ \text{and} \ \ \phi_{(A,f)}(F) = 1,
\end{equation}
and note that $\phi_{A, f}$ does not depend on the choice of $F \in V \tsr V$ made above. Let \[f^! = \psi^{-1}_{(A,f)}(\phi_{(A, f)}) \in \bar{A}^!_2,\]
then one checks that $f^!$ only depends on $f$, so the notation $f^!$ is well-chosen. Noting that, indeed, $(\bar{A}^!, f^!) \in {\tt Quad\text{-}QHS}$, we define \[(A, f)^! = (\bar{A}^!, f^!).\]

Now we define $( \ ) ^{!}: {\tt Quad\text{-}QHS} \to {\tt Quad\text{-}QHS}$ on morphisms. Let $(A, f)$, $(B, g)$ be objects in ${\tt Quad\text{-}QHS}$ with $A = T(V)/\la I_A \ra$, $0 \ne f \in A_2$ and $B = T(W)/\la I_B \ra$, $0 \ne g \in B_2$. Then we may consider the pairs $(\bar{A}^!, f^!)$ and $(\bar{B}^!, g^!)$. Suppose that we have some morphism $\varphi: (A, f) \to (B, g)$ in ${\tt Quad\text{-}QHS}$. Since $\varphi: A \to B$ maps $f$ to $g$, it is clear that we have an induced morphism $\bar{\varphi}: \bar{A} \to \bar{B}$. So, taking quadratic duals, we have a morphism $\bar{\varphi}^!: \bar{B}^! \to \bar{A}^!$. Now we will check that $\bar{\varphi}^!$ maps $g^!$ to $f^!$. By assumption, $\varphi(f) = g$, so it follows that if we have chosen $F \in V \tsr V$ and $G \in W \tsr W$ such that $f = F+I_A$ and $g = G+I_B$, then we have $\varphi(F) - G \in I_B$. Consider the following diagram 
\[\xymatrix{&(\bar{B})^!_2 \ar[r]^-{\psi_{(B, g)}} \ar[d]_-{\bar{\varphi}^!} & (I_{\bar{B}})^* \ar[d]^{\rho} \\ & (\bar{A})^!_2 \ar[r]^-{\psi_{(A, f)}} & (I_{\bar{A}})^*.
}
\]
The rightmost vertical map, $\rho: (I_{\bar{B}})^* \to (I_{\bar{A}})^*$, is the $\k$-linear dual of the map $\overline{\varphi} \tsr \overline{\varphi}: I_{\bar{A}} \to I_{\bar{B}}$. A straightforward unwinding of the definitions shows that this diagram is commutative. It is easy to check that $\rho(\phi_{(B, g)}) = \phi_{(A, f)}$ as linear functionals on $I_{\bar{A}}$, so using commutativity of the diagram we have 
\begin{align*}
\varphi^!(g^!) &= \varphi^!(\psi^{-1}_{(B, g)}(\phi_{(B, g)})) \\
&= \psi^{-1}_{(A, f)} \rho(\phi_{(B, g)}) \\
&= \psi^{-1}_{(A, f)}(\phi_{(A, f)}) \\
&= f^!.
\end{align*}
Hence we have the morphism $\varphi^!: (B, g)^! \to (A, f)^!$ in ${\tt Quad\text{-}QHS}$. 

It is obvious that the definition of $( \ )^{!}$ on ${\tt Quad\text{-}QHS}$ respects identity maps, composition of morphisms, and reverses arrows, so \[( \ )^{!}: {\tt Quad\text{-}QHS} \to {\tt Quad\text{-}QHS}\] is a contravariant functor. This completes the construction.

\end{construction}

Recall the notion of a \emph{category with duality}, see \cite[Definition 1]{Balmer}. This is a triple $({\tt C}, \ast, \eta)$ consisting of a category $\tt C$, a contravariant functor $\ast: {\tt C} \to {\tt C}$ and a natural isomorphism $\eta: {\rm Id}_{\tt C} \to \ast \circ \ast$, such that for all objects $M \in {\tt C}$, 
\[(\eta_M)^{\ast} \circ \eta_{M^{\ast}} = {\rm id}_{M^{\ast}}.\]

For example, consider the triple $({\tt FinVect}, \ast, \eta)$, where ${\tt FinVect}$ is the full subcategory of ${\tt Vect}$ consisting of finite-dimensional vector spaces, the functor $*: {\tt FinVect} \to {\tt FinVect}$ is given by $*(V) = V^*$, the usual $\k$-linear dual of $V$, and $\eta_V: V \to (V^*)^*$ is the natural map given by sending $v \in V$ to the linear functional on $V^*$ given by evaluation at $v$. Then it is easy to check that $({\tt FinVect}, \ast, \eta)$ is a category with duality.

\begin{theorem}\label{NCQuadQHypSurf is category with duality}
Let $\eta: {\rm Id}_{{\tt Quad\text{-}QHS}} \to ( \ )^! \circ ( \ )^!$ be the natural isomorphism whose component at $(A, f) \in {\tt Quad\text{-}QHS}$ is induced by the natural isomorphism $A \to (A^!)^!$. Then the triple $({\tt Quad\text{-}QHS}, ( \ )^!, \eta)$ is a category with duality. 
\end{theorem}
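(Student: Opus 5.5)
The plan is to compute $(\ )^!\circ(\ )^!$ explicitly on objects, identify the component $\eta_{(A,f)}$ concretely, and then verify three things in order: $\eta_{(A,f)}$ is a morphism in ${\tt Quad\text{-}QHS}$, $\eta$ is natural, and the duality identity $(\eta_{(A,f)})^!\circ\eta_{(A,f)^!}=\id_{(A,f)^!}$ holds.

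\textbf{Step 1: the double dual.} Let $A=T(V)/\la I_A\ra$ and $(A,f)^!=(\bar A^!,f^!)$, where $\bar A^! = T(V^*)/\la I_{\bar A}^\perp\ra$. To form $(\bar A^!,f^!)^!$ we choose a lift $F^!\in V^*\tsr V^*$ of $f^!$. Then $I_{\overline{\bar A^!}} = I_{\bar A}^\perp + \k F^!$. By the definition of $f^!$ through $\psi_{(A,f)}$ and $\phi_{(A,f)}$, $F^!$ vanishes on $I_A$, takes value $1$ on $F$, and $F^!\notin I_{\bar A}^\perp$. A dimension count then gives
\[
(I_{\bar A}^\perp+\k F^!)^\perp = I_{\bar A}\cap (F^!)^\perp = I_A
\]
inside $V^{**}\tsr V^{**}\cong V\tsr V$. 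Consequently the underlying algebra of the double dual is $T(V^{**})/\la I_A\ra\cong A$, and this identification is exactly the canonical isomorphism $A\to (A^!)^!$ induced by $\eta_V: V\to V^{**}$.

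\textbf{Step 2: $\eta_{(A,f)}$ is a morphism.} It remains to see that $\eta_{(A,f)}$ sends $f$ to $(f^!)^!$. By definition $(f^!)^!$ is the class of the element of $I_{\overline{\bar A^!}}^*$ that is $0$ on $I_{\bar A}^\perp$ and $1$ on $F^!$. Under $\psi$ this is represented by the evaluation at $F$, since $F$ kills $I_{\bar A}^\perp$ and $F^!(F)=1$. Hence $(f^!)^!=\eta(F)+\la I_A\ra=\eta_{(A,f)}(f)$.

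\textbf{Step 3: naturality.} For a morphism $\varphi:(A,f)\to(B,g)$, the double dual $(\bar\varphi^!)^{!}$ is induced by $(\varphi^*)^*$ on $V^{**}$. Naturality of $\eta$ therefore reduces to the naturality of $V\to V^{**}$ in ${\tt FinVect}$.

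\textbf{Step 4: the duality identity.} Both sides of $(\eta_{(A,f)})^!\circ\eta_{(A,f)^!}=\id$ are morphisms out of the one-generated algebra $\bar A^!$, so it suffices to check the identity on generators $V^*$. There it becomes $(\eta_V)^*\circ\eta_{V^*}=\id_{V^*}$, which is the category-with-duality axiom for $({\tt FinVect},\ast,\eta)$ noted before the theorem.

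\textbf{Main obstacle.} The hardest part is the bookkeeping in Steps 1 and 2. One must check that the orthogonal complement computation is compatible with the identification $V\cong V^{**}$, so that the relations come out as exactly $I_A$ rather than something merely isomorphic. One must also confirm that the normalizations in $\psi$ and $\phi$ give $(f^!)^!=f$ on the nose, not just up to a scalar. I would handle this by fixing dual bases and using the explicit formula \eqref{psi_(A,f)}. The remaining steps are formal.
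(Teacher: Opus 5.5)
Your proposal is correct and follows essentially the same route as the paper. The paper shows $(I_{\bar A})^{\perp}+\k F^! = I_A^{\perp}$, which is the orthogonal-complement form of your computation $(I_{\bar A}^{\perp}+\k F^!)^{\perp}=I_A$; it then checks $\eta_A(f)=(f^!)^!$ by evaluating against $F$ (its map $\l$, $\l(X+I_A)(r)=r(X)$), declares naturality clear, and reduces the duality identity to $(\eta_V)^*\circ\eta_{V^*}=\id_{V^*}$ for finite-dimensional vector spaces, exactly as in your Steps 1–4.
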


\begin{proof}

Let $A = T(V)/\la I_A \ra$ be a quadratic algebra. Then \[(A^!)^! = T((V^*)^*)/\la (I_A^{\perp})^{\perp}\ra,\] where $(I_A^{\perp})^{\perp}$ is considered as a subspace of $(V^*)^* \tsr (V^*)^*$. It is well known that the algebra map $\eta_A: A \to (A^!)^!$ induced by the canonical map $V \to (V^*)^*$ is an isomorphism, and, moreover, defines a natural isomorphism $\eta: {\rm Id}_{\tt Quad} \to ( \ )^! \circ ( \ )^!$. Now consider some pair $(A, f) \in {\tt Quad\text{-}QHS}$. We will now prove that the isomorphism $\eta_A: A \to (A^!)^!$ in ${\tt Quad}$ yields an isomorphism $\eta_{(A, f)}: (A, f) \to ((A, f)^!)^! $ in ${\tt Quad\text{-}QHS}$. Recall that $(A, f)^! = (\bar{A}^!, f^!)$, and choose some $F^! \in V^* \tsr V^*$ such that $f^! = F^!+(I_{\bar{A}})^{\perp}$. Then it is easy to see that we have $(I_{\bar{A}})^{\perp} + \k F^! = I_A^{\perp}$ as subspaces of $V^* \tsr V^*$, hence \[\overline{(\bar{A}^!)} = T(V^*)/\la (I_{\bar{A}})^{\perp} + \k F^! \ra = T(V^*)/\la I_A^{\perp} \ra = A^!.\] Therefore we see that $(\bar{A}^!, f^!)^! = ((A^!)^!, (f^!)^!)$. Now we will show that the morphism $\eta_A: A \to (A^!)^!$ sends $f$ to $(f^!)^!$. Consider the following diagram 
\[\xymatrix{&A_2 \ar[r]^-{\eta_A} \ar[rd]_-{\l} & (A^!)^!_2 \ar[d]^{\psi} \\ &  & (I_A^{\perp})^*,
}
\]
where the maps are given by \[\psi: (A^!)^!_2 \to (I_A^{\perp})^*, \quad \psi(t+(I_A^{\perp})^{\perp})(r) = t(r) \text{ for all } t \in (V^*)^* \tsr (V^*)^* \text{ and } r \in I_A^{\perp};\] and \[\l: A_2 \to (I_A^{\perp})^*, \quad \l(X+I_A)(r) = r(X) \text{ for all } X \in V \tsr V \text{ and } r \in I_A^{\perp}.\] 
One can easily check that these maps are well-defined, that $\psi$ is the vector space isomorphism defined above, and that the diagram commutes. Furthermore, we can see that $\l(f) = \phi_{(\bar{A}^!, f^!)}$, where $\phi_{(\bar{A}^!, f^!)} = \psi((f^!)^!)$ is the functional defined above, so it follows that $\eta_A(f) = (f^!)^!$. Thus, in ${\tt Quad\text{-}QHS}$ we have the isomorphism $\eta_{(A, f)}: (A, f) \to ((A, f)^!)^!$, induced by $A \to (A^!)^!$. It is clear that $\eta_{(A, f)}$ is natural in $(A, f)$, whence we have a natural isomorphism of functors $\eta: {\rm Id}_{{\tt Quad\text{-}QHS}} \to ( \ )^! \circ ( \ )^!$. 

Finally, the required identity \[(\eta_{(A, f)})^! \circ \eta_{(A, f)^!} = {\rm id}_{(A, f)^!}\] 
boils down to the fact that the analogous identity \[(\eta_V)^* \circ \eta_{V^*} = {\rm id}_{V^*}\] holds in the triple $({\tt FinVect}, \ast, \eta)$, as discussed above.
\end{proof}


We now begin to narrow our focus to a certain full subcategory of ${\tt Quad\text{-}QHS}$ consisting of pairs of Koszul algebras and quadratic normal regular elements. This category is defined below, Definition \ref{category NCKosHypSurf}, and will be a main object of consideration in the following sections when we study matrix factorizations.

Given some $(A, f) \in {\tt Quad\text{-}QHS}$, we first consider the relationship between the conditions of normality and regularity for the elements $f \in A$ and $f^! \in \bar{A}^!$. We have the following interesting result.

\begin{proposition}[{\cite[Lemma 1.3]{Shelton-Tingey}}] The element $f \in A$ is normal if and only if the element $f^! \in \bar{A}^!$ is 1-regular.
\end{proposition}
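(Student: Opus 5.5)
The plan is to dualize everything and reduce $1$-regularity of $f^!$ to a statement about the degree $3$ component of $A$. Write $R = I_{\bar A} = I_A \oplus \k F$, so that $\bar A^!_1 = V^*$ and
\[\bar A^!_3 = (V^*)^{\tsr 3}/\big(R^{\perp}\tsr V^* + V^* \tsr R^{\perp}\big).\]
By the standard orthogonality calculus, this gives $(\bar A^!_3)^* \cong (R\tsr V)\cap(V\tsr R)$. I would choose $F^! \in V^*\tsr V^*$ representing $f^!$. By \eqref{phiAf} and \eqref{psi_(A,f)}, $F^!$ restricted to $R$ is the functional $\phi_{(A,f)}$: it kills $I_A$ and takes the value $1$ on $F$.

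First I treat right multiplication $\p_{f^!}\colon V^* \to \bar A^!_3$, $\l \mapsto \l f^!$. It is injective if and only if its transpose $(R\tsr V)\cap(V\tsr R) \to V$ is surjective. To compute the transpose, take $w \in (R\tsr V)\cap(V\tsr R)$ and write $w = \sum_k x_k\tsr r_k$ with $r_k\in R$, using the dual basis formula \eqref{Delta}. Then $\la w, \l\tsr F^!\ra = \sum_k \l(x_k)\phi_{(A,f)}(r_k)$, so the transpose sends $w$ to $\sum_k \phi_{(A,f)}(r_k)\,x_k$. In words, the transpose reads off the ``$F$-coefficient'' of the right-hand tensor factor. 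Hence surjectivity means the following: for every $v\in V$ there is some $w\in R\tsr V$ with $w \equiv v\tsr F \pmod{V\tsr I_A}$. Equivalently, $v\tsr F \in I_A\tsr V + \k F\tsr V + V\tsr I_A$ in $V^{\tsr 3}$. Passing to $A_3 = V^{\tsr 3}/(I_A\tsr V + V\tsr I_A)$, this says exactly that $vf \in fA_1$. So injectivity of right multiplication by $f^!$ on $\bar A^!_1$ is equivalent to $A_1 f \subseteq f A_1$.

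By the symmetric argument, with the roles of the two tensor factors swapped, injectivity of left multiplication by $f^!$ on $\bar A^!_1$ is equivalent to $fA_1 \subseteq A_1 f$. I should double-check which side corresponds to which, but the statement is symmetric, so this does not affect the result. Putting the two together, $f^!$ is $1$-regular if and only if $fA_1 = A_1 f$.

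It remains to check that $fA_1 = A_1 f$ is equivalent to $f$ being normal. If $fA = Af$, then comparing degree $3$ components gives $fA_1 = A_1 f$, since $f$ is homogeneous of degree $2$. Conversely, because $A$ is generated by $A_1$, induction on degree gives $fA_n = A_n f$ for all $n$, hence $fA = Af$.

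I expect the main obstacle to be the careful identification of the transpose of $\p_{f^!}$, and in particular justifying the membership criterion. One must show that $v\tsr F$ lies in $(R\tsr V) + (V\tsr I_A)$ precisely when some element of the intersection $(R\tsr V)\cap(V\tsr R)$ has $F$-coefficient $v$. This is an elementary linear algebra step, but the bookkeeping of which side carries $I_A$ and which carries $R$ needs to be tracked precisely.
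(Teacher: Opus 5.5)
Your proposal is correct, and the step you flagged as the main obstacle is immediate. Since $V\tsr R=(V\tsr I_A)\oplus(V\tsr \k F)$, each $w\in(R\tsr V)\cap(V\tsr R)$ can be written uniquely as $y+u\tsr F$ with $y\in V\tsr I_A$, and the transpose of $\p_{f^!}$ sends $w$ to $u$. Conversely, if $v\tsr F=w-y$ with $w\in R\tsr V$ and $y\in V\tsr I_A$, then $w$ automatically lies in $V\tsr R$ and maps to $v$.

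The paper does not prove this statement; it quotes it from Shelton--Tingey. Your argument is, however, the same orthogonality computation the paper uses in the proof of Proposition \ref{normal, 1-regular}. There $r\in I_{\bar A}\tsr V\cap V\tsr I_{\bar A}$ is written as $r=r_1+F\tsr v_1=r_2+v_2\tsr F$, and the coefficient maps $r\mapsto v_1$ and $r\mapsto v_2$ are exactly the transposes of $\l_{f^!}$ and $\p_{f^!}$ on $\bar A^!_1=V^*$.

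The two arguments differ in what they extract from these maps. The paper assumes $f$ is normal and $1$-regular, and uses injectivity of left multiplication by $f$ on $A_1$ to show $v_1=\varphi^{-1}(v_2)$. This gives the explicit normalizing map $(\varphi^{-1})^*$, but leaves $1$-regularity of $f^!$ to the citation. You instead identify the images of the two coefficient maps. This gives the full equivalence with no hypothesis on $f$, and also the one-sided refinements: $\p_{f^!}$ is injective on $\bar A^!_1$ if and only if $A_1f\subseteq fA_1$, and $\l_{f^!}$ is injective if and only if $fA_1\subseteq A_1f$.

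Your side assignment is correct. For example, take $A=\k\la x,y\ra/\la yx\ra$ and $f=x^2$. Then $A_1f=\k x^3\subsetneq fA_1$. In $\bar A^!=\k\la\xi,\eta\ra/\la\xi\eta,\eta^2\ra$ with $f^!=\xi^2$, right multiplication by $f^!$ is injective in degree $1$, while left multiplication kills $\eta$.
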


We make this more precise in the following.


%

\begin{proposition}\label{normal, 1-regular}
Let $A = T(V)/\la I_A \ra$ be a quadratic algebra. Suppose that $f \in A_2$ is normal and $1$-regular, and let $\varphi: V \to V$ be the unique invertible linear map such that $fv = \varphi(v)f$ for all $v \in V$. Then the element $f^! \in \bar{A}^!_2$ is normal and $1$-regular. Moreover, $f^! \l = (\varphi^{-1})^*(\l)f^!$ for all $\l \in V^*$, where $(\varphi^{-1})^*$ is the $\k$-linear dual of $\varphi^{-1}$.
\end{proposition}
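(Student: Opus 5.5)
The plan is to reduce everything to a statement about the degree $3$ component $\bar{A}^!_3 = (V^*)^{\tsr 3}/\big((I_{\bar A})^{\perp}\tsr V^* + V^*\tsr (I_{\bar A})^{\perp}\big)$, using the standard duality $\bar{A}^!_3 \cong \big((I_{\bar A}\tsr V)\cap(V\tsr I_{\bar A})\big)^*$. Under this identification, an element $t \in (V^*)^{\tsr 3}$ vanishes in $\bar{A}^!_3$ if and only if $t$ annihilates $(I_{\bar A}\tsr V)\cap(V\tsr I_{\bar A})$. Fix $F\in V\tsr V$ lifting $f$ and a lift $F^!\in V^*\tsr V^*$ of $f^!$; by \eqref{phiAf} we may take $F^!$ with $F^!(I_A)=0$ and $F^!(F)=1$.

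The first step is to prove the commutation formula $f^!\l = (\varphi^{-1})^*(\l) f^!$ in $\bar{A}^!_3$. Take $\l\in V^*$ and consider $t = F^!\tsr\l - (\varphi^{-1})^*(\l)\tsr F^!$. It suffices to show $t$ kills every $w\in (I_{\bar A}\tsr V)\cap(V\tsr I_{\bar A})$. Write $w = R_1 + F\tsr u$ with $R_1 \in I_A\tsr V$, and $w = R_2 + u'\tsr F$ with $R_2 \in V\tsr I_A$. Evaluating gives $t(w) = \l(u) - \l(\varphi^{-1}(u'))$. The key input is now the normality relation: in $A_3$ we have $0 = w = fu = u' f$ modulo $I_A$-terms. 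More precisely, $F\tsr u - u'\tsr F \in I_A\tsr V + V\tsr I_A$, so $u' f = f u = \varphi(u) f$ in $A_3$. By $1$-regularity (right multiplication by $f$ is injective on $A_1$) we get $u' = \varphi(u)$. Hence $t(w)=0$, which gives the formula.

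Next comes normality of $f^!$. Since $(\varphi^{-1})^*$ is invertible on $V^*$ and $\bar{A}^!$ is generated by $V^*$, the formula gives $f^! V^* = V^* f^!$, and by induction $f^!\bar{A}^! = \bar{A}^! f^!$. For $1$-regularity, suppose $f^!\l = 0$ in $\bar{A}^!_3$. Then $F^!\tsr\l$ kills all $w$ as above, so $\l(u)=0$ for every $u$ arising this way. I will show that every $u\in V$ arises. Given $u$, since $F\tsr u - \varphi(u)\tsr F$ maps to $fu-\varphi(u)f=0$ in $A_3$, it lies in $I_A\tsr V + V\tsr I_A$. Write it as $R_1 - R_2$. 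Then $w := R_1 + F\tsr u\cdot 0$\ldots{} more carefully, set $w = F\tsr u - R_1 = \varphi(u)\tsr F - R_2$. This lies in both $I_{\bar A}\tsr V$ and $V\tsr I_{\bar A}$, and its "$F\tsr$-coefficient" is $u$. Hence $\l=0$. Left $1$-regularity follows from the formula together with the invertibility of $(\varphi^{-1})^*$.

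The main obstacle is making the extraction of "the $F$-coefficient" $u$ well defined. Because $I_{\bar A} = I_A\oplus \k F$ is a direct sum, the decomposition $I_{\bar A}\tsr V = (I_A\tsr V)\oplus (F\tsr V)$ makes $u$ unique, and similarly for $u'$. Getting the sign conventions of the pairing, and the direction of $\varphi$ versus $\varphi^{-1}$, right in the evaluation $t(w)$ is where I expect to have to be careful.
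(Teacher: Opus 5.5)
Your argument is essentially the paper's. The paper pairs $F^!\tsr\l-(\varphi^{-1})^*(\l)\tsr F^!$ against $w=R_1+F\tsr u=R_2+u'\tsr F\in (I_{\bar{A}}\tsr V)\cap(V\tsr I_{\bar{A}})$, and normality plus $1$-regularity of $f$ then force $u'=\varphi(u)$; the only differences in your version are that you cancel $f$ on the right where the paper cancels on the left (immaterial), and that you derive normality and $1$-regularity of $f^!$ directly, the latter by showing every $u\in V$ is the $F$-coefficient of some such $w$, where the paper just cites Shelton--Tingey — this derivation is correct and makes the proof self-contained.
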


\begin{proof}
The statement that $f^!$ is normal and $1$-regular is proved in \cite[Lemma 1.3]{Shelton-Tingey}. However, note that for the implication \[f \text{ 1-regular } \implies f^! \text{ normal},\] we can use our result above that $\eta_{(A, f)}: (A, f) \to ((A^!)^!, (f^!)^!)$ is an isomorphism in ${\tt Quad\text{-}QHS}$. 

We proceed to prove the moreover statement. Fix a choice of $F \in V \tsr V$ such that $f = F+I_A$. Let $I_{\bar{A}} = I_A + \k F$ and $\bar{A} = T(V)/\la I_{\bar{A}} \ra$. Fix a choice of $F^! \in V^* \tsr V^*$ such that $f^! = F^!+(I_{\bar{A}})^{\perp}$. Let $\phi = \phi_{(A, f)} \in (I_{\bar{A}})^*$ be as above. 

We first claim that the map \[\psi:= \phi \tsr 1-\varphi^{-1} \tsr \phi: I_{\bar{A}} \tsr V \cap V \tsr I_{\bar{A}} \to V\] 
is the zero map. To see this, let $r \in I_{\bar{A}} \tsr V \cap V \tsr I_{\bar{A}}$ and write \[r = r_1+F \tsr v_1 = r_2 + v_2 \tsr F\] for some $r_1 \in I_A \tsr V$, $r_2 \in V \tsr I_A$; $v_1, v_2 \in V$. Then $\psi(r) = v_1-\varphi^{-1}(v_2)$. 

By assumption, \[v_2 \tsr F \equiv F \tsr \varphi^{-1}(v_2) \pmod{V \tsr I_A + I_A \tsr V}.\] Since $F \tsr v_1 - v_2 \tsr F = r_2 - r_1 \in V \tsr I_A + I_A \tsr V$, we have \[F\tsr(v_1-\varphi^{-1}(v_2)) \equiv F \tsr v_1 - v_2 \tsr F \equiv 0 \pmod{V \tsr I_A + I_A \tsr V}.\] So $f(v_1-\varphi^{-1}(v_2)) = 0$ in $A$, and so $v_1-\varphi^{-1}(v_2) = 0$, whence the claim. 

Now we claim that \[F^! \tsr \l-(\varphi^{-1})^*(\l) \tsr F^! \in (I_{\bar{A}})^{\perp} \tsr V^* + V^* \tsr (I_{\bar{A}})^{\perp} \quad \text{ for all } \l \in V^*.\] 
To see this, since \[[(I_{\bar{A}})^{\perp} \tsr V^* + V^* \tsr (I_{\bar{A}})^{\perp}]^{\perp} = I_{\bar{A}} \tsr V \cap V \tsr I_{\bar{A}},\] 
it is equivalent to show that \[r(F^! \tsr \l - (\varphi^{-1})^*(\l) \tsr F^!) = 0 \quad \text{ for all } r \in I_{\bar{A}} \tsr V \cap V \tsr I_{\bar{A}}.\] 
Write $r = r_1+F\tsr v_1 = r_2+v_2 \tsr F$ as above. Then 
\begin{align*}
r(F^! \tsr \l - (\varphi^{-1})^*(\l) \tsr F^!) &= \l(v_1)-[(\varphi^{-1})^*(\l)](v_2) \\
&= \l(v_1-\varphi^{-1}(v_2)) \\
&= \l(0) \quad \text{ by the first claim} \\
&= 0.
\end{align*}
This proves the second claim, and we see that $f^!\l = (\varphi^{-1})^*(\l)f^!$ in $\bar{A}^!$ for all $\l \in V^*$, as desired.
\end{proof}

In the case where $A$ is a Koszul algebra, we have the following stronger result. 
\begin{proposition}\label{normal, regular, Koszul case}
Suppose $A = T(V)/\la I_A \ra$ is Koszul and $f \in A_2$ is normal and regular with normalizing automorphism $\varphi_f: A \to A$. Then $f^! \in (\bar{A})^!_2$ is normal and regular. Furthermore, the normalizing automorphism $\varphi_{f^!}: \bar{A}^! \to \bar{A}^!$ of $f^!$ is induced by the linear map $(\varphi^{-1})^*: V^* \to V^*$.
\end{proposition}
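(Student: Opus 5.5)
Since $f$ is normal and regular, it is in particular $1$-regular, so Proposition \ref{normal, 1-regular} applies. It shows that $f^!$ is normal and $1$-regular, and that $f^!\l = (\varphi^{-1})^*(\l) f^!$ for all $\l \in V^*$. Because $\bar{A}^!$ is generated in degree one, this relation propagates multiplicatively: $f^! x = \sigma(x) f^!$ for all $x \in \bar{A}^!$. Here $\sigma$ is the graded map on $T(V^*)$ induced by $(\varphi^{-1})^*$. Two things remain to show. First, $\sigma$ preserves $(I_{\bar A})^{\perp}$, so it descends to an automorphism of $\bar{A}^!$; this makes it the normalizing automorphism once regularity is known. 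Second, $f^!$ is regular, i.e.\ not a zero divisor in all degrees, not just degree one.

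For the first point, I would argue by duality. The graded automorphism $\varphi_f$ of $A$ fixes $f$ up to scalar, since $f f = \varphi_f(f) f$ and regularity give $\varphi_f(f)=f$. Hence $\varphi_f$, and therefore $\varphi^{-1}$, is an automorphism of the pair $(A,f)$ in ${\tt Quad\text{-}QHS}$. By Construction \ref{duality functor on NC-Quad-QHypSurf}, $(\varphi^{-1})^! = \overline{\varphi^{-1}}^{\,!}$ is an automorphism of $(\bar{A}^!, f^!)$ induced by $(\varphi^{-1})^*$ on $V^*$. So $\sigma$ is a well-defined graded automorphism of $\bar{A}^!$ fixing $f^!$. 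Combined with the degree-one relation, it is the normalizing automorphism, provided $f^!$ is regular.

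For regularity I would use Koszulity. By Shelton–Tingey / Positselski–Polishchuk style results (\cite{PP, Shelton-Tingey}), if $A$ is Koszul and $f\in A_2$ is normal and regular, then $\bar A = A/\la f\ra$ is Koszul. For Koszul algebras, a central or normal regular quadratic element gives a Hilbert series relation $H_{\bar A}(t) = (1-t^2)H_A(t)$. Koszulity of both gives $H_{\bar A^!}(t) = H_{\bar A}(-t)^{-1}$ and $H_{A^!}(t)=H_A(-t)^{-1}$. Hence
\[H_{\bar A^!}(t) = \frac{H_{A^!}(t)}{1-t^2}.\]
Now $f^!$ is normal and $\bar A^!/\la f^!\ra \cong A^!$ (Theorem \ref{NCQuadQHypSurf is category with duality}). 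Consider the exact sequence
\[0\to K \to \bar A^!(-2)\xrightarrow{\cdot f^!} \bar A^! \to A^!\to 0,\]
where $K$ is the right annihilator of $f^!$, suitably shifted. It gives $H_{\bar A^!}(t)(1-t^2) + H_K(t) = H_{A^!}(t)$. The Hilbert series identity forces $H_K=0$, so $f^!$ is not a left zero divisor. Normality with the bijective $\sigma$ then gives that it is not a right zero divisor either.

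The main obstacle I expect is justifying that $\bar A$ is Koszul, together with the Hilbert series relation, in this generality. I would cite the standard result that the quotient of a Koszul algebra by a normal regular quadratic element is Koszul, with $\bar A^!$ a central or normal extension of $A^!$; see \cite[Ch.~3]{PP} and \cite{Shelton-Tingey}.
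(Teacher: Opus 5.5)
Your argument is correct, but it takes a different route from the paper. The paper's proof is a citation. Normality and regularity of $f^!$ are quoted from \cite[Corollary 1.4]{Shelton-Tingey}. The description of $\varphi_{f^!}$ is then read off from the degree-one relation $f^!\l=(\varphi^{-1})^*(\l)f^!$ of Proposition~\ref{normal, 1-regular}, since a graded normalizing automorphism of a one-generated algebra is determined in degree one.

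You instead reprove the Shelton--Tingey input:
\begin{itemize}
\item You show $(\varphi^{-1})^*$ descends to a graded automorphism of $\bar{A}^!$ fixing $f^!$. You do this by applying the duality functor of Construction~\ref{duality functor on NC-Quad-QHypSurf} to the automorphism $\varphi_f^{-1}$ of the pair $(A,f)$, using $\varphi_f(f)=f$.
\item You obtain regularity from the Hilbert series identity $H_{\bar A^!}(t)(1-t^2)=H_{A^!}(t)$ together with $\bar{A}^!/\la f^!\ra\cong A^!$. This needs only Koszulity of $A$ and $\bar A$.
\end{itemize}
What this buys is transparency. It isolates Koszulity of $\bar A$ (\cite[Theorem 1.2]{Shelton-Tingey}, which the paper also invokes right after this proposition) as the only external input. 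It also makes explicit why $(\varphi^{-1})^*$ is well defined on $\bar{A}^!$, a point the paper's ``follows immediately'' leaves implicit.

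Some minor remarks:
\begin{itemize}
\item The relation $H_{\bar A}(t)=(1-t^2)H_A(t)$ uses only that $f$ is normal and regular, not Koszulity.
\item Your exact-sequence step needs only the inclusion $f^!\bar A^!\subseteq \bar A^! f^!$, which the degree-one relation already gives. So the descent of $\sigma$ could alternatively be deduced afterwards: once $yf^!=0$ forces $y=0$, the equation $\overline{\sigma(X)}f^!=f^!\overline{X}$ shows $\sigma$ preserves the defining ideal.
\item Your ``left/right'' labels for the annihilator and the zero divisor are swapped relative to the usual convention. You swap them consistently, so nothing breaks.
\end{itemize}
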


\begin{proof}
The statement that $f^! \in (\bar{A})^!_2$ is normal and regular is proved in \cite[Corollary 1.4]{Shelton-Tingey}. The statement that the normalizing automorphism of $f^! \in (\bar{A})^!_2$ is induced by $(\varphi^{-1})^*: V^* \to V^*$ follows immediately from Proposition \ref{normal, 1-regular}.
\end{proof}

Motivated, in part, by the last result, we now define the promised full subcategory of ${\tt Quad\text{-}QHS}$. 

\begin{definition}\label{category NCKosHypSurf}
Let $\tt{Kos\text{-}QHS}$ denote the full subcategory of the category ${\tt Quad\text{-}QHS}$ whose objects are pairs $(A, f)$, where $A$ is a Koszul algebra and $f \in A_2$ is a normal, regular element. We call $\tt{Kos\text{-}QHS}$ the \emph{category of noncommutative Koszul quadric hypersurfaces}.
\end{definition}

Given some $(A, f) \in \tt{Kos\text{-}QHS}$, the well-known result that $A/\la f \ra$ is Koszul, (\cite[Theorem 1.2]{Shelton-Tingey}), implies that the algebra $\bar{A}^!$ is also Koszul. Hence, using Proposition \ref{normal, regular, Koszul case}, the contravariant functor \[( \ )^!: {\tt Quad\text{-}QHS} \to {\tt Quad\text{-}QHS}\] 
restricts to a functor $( \ )^!: \tt{Kos\text{-}QHS} \to \tt{Kos\text{-}QHS}$. The next result then follows immediately 
from Theorem \ref{NCQuadQHypSurf is category with duality}.

\begin{corollary}\label{NCKosHypSurf is category with duality}
The triple $(\tt{Kos\text{-}QHS}, ( \ )^!, \eta)$ is a category with duality.
\end{corollary}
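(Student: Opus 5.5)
The plan is to deduce the corollary from Theorem \ref{NCQuadQHypSurf is category with duality} by restriction to a full subcategory. In general, suppose $({\tt C}, \ast, \eta)$ is a category with duality and ${\tt D} \subset {\tt C}$ is a full subcategory such that $M^{\ast} \in {\tt D}$ for every object $M \in {\tt D}$. Then $\ast$ restricts to a contravariant functor ${\tt D} \to {\tt D}$. The components $\eta_M$ for $M \in {\tt D}$ are isomorphisms $M \to M^{\ast\ast}$ between objects of ${\tt D}$, and they lie in ${\tt D}$ because ${\tt D}$ is full. Naturality of $\eta$ on ${\tt D}$ and the identity $(\eta_M)^{\ast} \circ \eta_{M^{\ast}} = {\rm id}_{M^{\ast}}$ are equations between morphisms of ${\tt C}$, so they are inherited. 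Inverses of the $\eta_M$ also stay in ${\tt D}$ by fullness, so $\eta$ restricts to a natural isomorphism ${\rm Id}_{\tt D} \to \ast \circ \ast$.

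It therefore suffices to check that ${\tt Kos\text{-}QHS}$ is closed under $(\ )^!$. Take $(A, f) \in {\tt Kos\text{-}QHS}$, so $A$ is Koszul and $f \in A_2$ is normal and regular. First, $\bar{A} = A/\la f \ra$ is Koszul by \cite[Theorem 1.2]{Shelton-Tingey}. The quadratic dual of a Koszul algebra is Koszul, so $\bar{A}^!$ is Koszul. Second, Proposition \ref{normal, regular, Koszul case} shows that $f^! \in \bar{A}^!_2$ is normal and regular; it is nonzero because $\phi_{(A,f)} \neq 0$. Hence $(A, f)^! = (\bar{A}^!, f^!) \in {\tt Kos\text{-}QHS}$, and Theorem \ref{NCQuadQHypSurf is category with duality} together with the restriction argument gives the result.

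There is no real obstacle here; the only substantive inputs are the cited Koszulity and regularity results. The one point to watch is that the components $\eta_{(A,f)}$ and the double dual $((A,f)^!)^!$ stay inside ${\tt Kos\text{-}QHS}$. This follows from applying the closure property twice, together with fullness of the subcategory.
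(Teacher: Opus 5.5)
Your proposal is correct and follows the paper's argument. The paper likewise notes that $\bar{A}^!$ is Koszul because $\bar{A}$ is Koszul by \cite[Theorem 1.2]{Shelton-Tingey}, and uses Proposition \ref{normal, regular, Koszul case} to get that $f^!$ is normal and regular; it concludes that $(\ )^!$ restricts to ${\tt Kos\text{-}QHS}$, so the duality is inherited from Theorem \ref{NCQuadQHypSurf is category with duality}. Your remarks on fullness (so that $\eta_{(A,f)}$ and its inverse stay in the subcategory) and on $f^! \neq 0$ spell out details the paper leaves implicit.
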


\section{Homotopy category of curved modules and noncommutative matrix factorizations}

Suppose that $(A, f)$ is a noncommutative quadratic quadric hypersurface. In this section we first construct a functor from the category of graded modules over the algebra $\bar{A}^!$ to the homotopy category of CDG-modules over a certain CDG-algebra. More precisely, let $T = A \tsr \bar{A}^!$ and $\theta_T = f \tsr f^! \in T$. We show that there is a canonical CDG-algebra $T^{\bullet} = (T, d_T, \theta_T)$. Then we construct and prove that there is a faithful functor ${\mc F} : {\tt GrMod}\text{-}\bar{A}^! \to{\tt Ho}( {\tt CDGMod}\text{-}T^{\bullet})$. We then impose some mild restrictions on the pair $(A,f)$ and consider a certain full subcategory, denoted ${\tt P}(\bar{A}^!, f^!)$, of ${\tt GrMod}\text{-}\bar{A}^!$. We then prove that the restriction of the functor ${\mc F} : {\tt GrMod}\text{-}\bar{A}^! \to{\tt Ho}( {\tt CDGMod}\text{-}T^{\bullet})$ to ${\tt P}(\bar{A}^!, f^!)$ is valued in the category of graded left noncommutative matrix factorizations of $f$, in the sense of \cite{Mori-Ueyama-1}.


\subsection{The curved DG algebra associated to $(A, f) \in {\tt Quad}\text{-}{\tt QHS}$}

Suppose that $(A, f)$ is a noncommutative quadratic quadric hypersurface. Consider the tensor product algebra $T = A \tsr \bar{A}^!$ (so $A$ and $\bar{A}^!$ are commuting subalgebras), and endow $T$ with an $\N$-graded algebra structure via $T_n = A \tsr \bar{A}^!_n$. Suppose that $A = T(V)/\la I_A \ra$, and let $\{x_k\} \subset V$, $\{\l_k\} \subset V^*$ be a pair of dual bases. Let $b = \sum_k x_k \tsr \l_k \in T_1$. It is well known that the element $b \in T_1$ does not depend on the choice of dual bases. To see this, one can consider the canonical isomorphism $\Omega: V \tsr V^* \to \Hom(V, V)$ given by $v \tsr \l \mapsto [w \mapsto \l(w)v]$ and then check that $\Omega(b) = \id_V$. 

\begin{lemma}\label{magic square lemma}
Let $T$ and $b \in T_1$ be defined as above. Then, in the algebra $T$, we have $b^2 = f \tsr f^!$. 

\end{lemma}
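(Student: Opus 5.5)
Expanding $b^2$ in the tensor product algebra $T = A \tsr \bar{A}^!$, where the two factors commute, gives
\[b^2 = \sum_{k,l} x_k x_l \tsr \l_k \l_l,\]
which is the image of the canonical element
\[\beta = \sum_{k,l} (x_k \tsr x_l) \tsr (\l_k \tsr \l_l) \in (V\tsr V)\tsr (V^*\tsr V^*).\]
The element $\beta$ corresponds to the identity of $V \tsr V$ under the canonical isomorphism $(V\tsr V)\tsr (V\tsr V)^* \cong \Hom(V\tsr V, V\tsr V)$. As for $b$, it is therefore independent of the choice of dual bases, and we may in fact use \emph{any} pair of dual bases of $V\tsr V$ and $(V\tsr V)^* = V^*\tsr V^*$.

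Next I choose adapted bases. Set $I_{\bar A} = I_A \oplus \k F$ and pick a complement $W$ with $V\tsr V = I_A \oplus \k F \oplus W$. Take a basis $\{r_i\}$ of $I_A$ and a basis $\{w_j\}$ of $W$; together with $F$ these form a basis of $V\tsr V$. Let $\{\rho_i\}, \Phi, \{\omega_j\}$ be the dual basis in $V^*\tsr V^*$. Then
\[\beta = \sum_i r_i \tsr \rho_i + F\tsr \Phi + \sum_j w_j \tsr \omega_j.\]
I then check the three groups of terms:
\begin{itemize}
\item Each $r_i$ lies in $I_A$, so $r_i$ maps to $0$ in $A_2$.
\item Each $\omega_j$ vanishes on $I_A \oplus \k F = I_{\bar A}$, so $\omega_j \in (I_{\bar A})^\perp$ and maps to $0$ in $\bar{A}^!_2$.
\item In the remaining term, $F \mapsto f$ in $A$.
\end{itemize}
The functional $\Phi$ restricted to $I_{\bar A}$ kills $I_A$ and sends $F$ to $1$, so $\Phi|_{I_{\bar A}} = \phi_{(A,f)}$. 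By the definition of $\psi_{(A,f)}$ in \eqref{psi_(A,f)}, this means $\psi_{(A,f)}(\Phi + (I_{\bar A})^\perp) = \phi_{(A,f)}$, hence $\Phi + (I_{\bar A})^\perp = f^!$. Combining these gives $b^2 = f \tsr f^!$.

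The only delicate step is the basis-independence of $\beta$, that is, that the products $x_kx_l$ and $\l_k\l_l$ can be regrouped via an arbitrary dual basis pair of $V\tsr V$. I would handle it by observing that $\{x_k\tsr x_l\}$ and $\{\l_k\tsr\l_l\}$ are dual bases under the pairing of Section \ref{vector spaces}, and that $\sum_\alpha e_\alpha\tsr e^\alpha$ is independent of the dual basis pair, by the same $\Omega$ argument used for $b$. Everything else is bookkeeping.
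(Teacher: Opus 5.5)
Your argument is correct, but it takes a somewhat different route from the paper's. Both proofs rest on the same fact: $\sum_{k,l} x_k\tsr x_l\tsr \l_k\tsr \l_l$ is the canonical element of $(V\tsr V)\tsr (V\tsr V)^*$, which the paper phrases as $\Omega(S)=\id_{V\tsr V}$.

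The paper never chooses a basis adapted to $I_{\bar{A}}$. Instead it builds a commutative square relating $\mu_A\tsr\mu_{\bar{A}^!}$ to "restrict to $I_{\bar{A}}$, then apply $\mu_A$". It then detects the equality $b^2=f\tsr f^!$ after applying $\Psi\colon A_2\tsr \bar{A}^!_2\to \Hom(I_{\bar{A}},A_2)$, checking that both sides send $r=t+aF$ to $af$. This needs $\Psi$ to be injective, which follows from $\psi_{(A,f)}$ being an isomorphism.

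You instead rewrite the canonical element in a dual basis adapted to $I_A\oplus \k F\oplus W$ and compute its image term by term:
\begin{itemize}
\item the $I_A$-terms die on the $A$ side;
\item the complement terms die on the $\bar{A}^!$ side, because their dual functionals lie in $(I_{\bar{A}})^{\perp}$;
\item the single surviving term is visibly $f\tsr f^!$.
\end{itemize}

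Your version is more elementary and shows exactly why each piece vanishes, at the small cost of choosing a complement $W$. The paper's version avoids that choice and packages the computation as a naturality statement. In passing, it shows that the image of $S$ corresponds under $\Psi$ to $\mu_A|_{I_{\bar{A}}}$.
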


\begin{proof}

Let $A = T(V) \la I_A \ra$, and recall the definitions of $\bar{A}^!$ and the isomorphism in \ref{psi_(A,f)}, 
$\psi_{(A, f)}: \bar{A}^!_2 \to I_{\bar{A}}^*$. Let $\mu: A \tsr A$ and $\mu_{\bar{A}^!}$ denote the multiplication maps in $A$ and $\bar{A}^!$, respectively. Consider the following diagram in ${\tt Vect}$

\begin{equation}
\label{magic square}
\xymatrix{&V \tsr V \tsr V^* \tsr V^* \ar[rr]^-{\mu_A \tsr \mu_{\bar{A}^!}} \ar[d]_-{\Omega} & & A_2 \tsr \bar{A}^!_2 \ar[d]^{\Psi} \\ &{\rm Hom}_{\k}(V \tsr V, V \tsr V) \ar[rr]^-{(\mu_A)_*  \p} & & {\rm Hom}_{\k}(I_{\bar{A}}, A_2)
}
\end{equation}
where the vertical maps are given by
\begin{align*}
&\Omega: v_1 \tsr v_2 \tsr \l_1 \tsr \l_2 \mapsto [u_1 \tsr u_2 \mapsto \l_1(u_1)\l_2(u_2) \, v_1 \tsr v_2], \\
&\Psi: x \tsr \t \mapsto [r \mapsto [\psi_{(A, f)}(\tau)](r) \, x],
\end{align*}
and the map $(\mu_A)_{*} \p$ is restriction to $I_{\bar{A}}$ followed by the pushforward of the multiplication map $\mu_A: V \tsr V \to A_2$. We note that the vertical maps in diagram \ref{magic square} are isomorphisms. This is well known for $\Omega$, and easy to see for $\Psi$ since $\Psi$ is the composite mapping
\[
\xymatrix{
& A_2 \tsr \bar{A}_2^! \ar[rr]^-{\id \tsr \psi_{(A,f)}} & & A_2  \tsr I_{\bar{A}}^* \ar[r] & \Hom_{\k}(I_{\bar{A}}, A_2),
}
\]
where the last map is the canonical isomorphism. 

We now check that diagram \ref{magic square} commutes. Let $v_1 \tsr v_2 \tsr \l_1 \tsr \l_2 \in V \tsr V \tsr V^* \tsr V^*$ and $r \in I_{\bar{A}}$. Then, along the east-south path we have

\begin{align*}
\big[ [\Psi \circ (\mu_A \tsr \mu_{\bar{A}^!})](v_1 \tsr v_2 \tsr \l_1 \tsr \l_2)\big](r) &= \big[\Psi\big((v_1 \tsr v_2 + I_A) \tsr (\l_1 \tsr \l_2 +I_{\bar{A}^!})\big)\big](r) \\
&= [\psi_{(A, f)}(\l_1 \tsr \l_2 + I_{\bar{A}^!})](r)\big(v_1 \tsr v_2 + I_A \big) \\
&= (\l_1 \tsr \l_2)(r) \big(v_1 \tsr v_2 + I_A\big), 
\end{align*}
and along the south-east path we have

\begin{align*}
\big[[(\mu_A)_* \p \circ \Omega](v_1 \tsr v_2 \tsr \l_1 \tsr \l_2)\big](r) &= \mu_A([\Omega(v_1 \tsr v_2 \tsr \l_1 \tsr \l_2)](r)) \\
&= (\l_1 \tsr \l_2)(r) \big(v_1 \tsr v_2 + I_A\big),
\end{align*}
showing that diagram \ref{magic square} is commutative. 

Fix dual bases $\{x_k\} \subset V$, $\{\l_k\} \subset V^*$ and consider the element \[S = \sum_{k, l} x_k \tsr x_l \tsr \l_k \tsr \l_l \in V \tsr V \tsr V^* \tsr V^*.\] It is clear that $\Omega(S) = \id_{V \tsr V}$. Recall that $I_{\bar{A}} = I_A \oplus \k F$, where $\mu_A(F) = f$. Let $r \in I_{\bar{A}}$ and write $r = t+aF$ for some $t \in I_A$ and $a \in \k$. Then unraveling definitions yields \[\big[(\mu_A)_* \p \, \Omega(S)\big](r) = af.\] 

Since diagram \ref{magic square} is commutative, we have 
\[\big[\Psi (\mu_A \tsr \mu_{\bar{A}^!})(S)\big](r) = \big[\Psi(b^2)\big](r) = af.\]

We also have
\begin{align*}
\big[\Psi(f \tsr f^!)\big](r) &= \psi_{(A, f)}(f^!)(r) \, f, \quad \text{definition of $\Psi$} \\
&= \phi_{(A, f)}(r) \, f,  \quad \ \ \ \ \   \text{definition of $f^!$ and \ref{phiAf}} \\
&= \phi_{(A, f)}(t+aF) \, f \\
&= af, \quad \ \ \ \ \ \ \ \ \ \ \ \ \ \ \ \,   \text{definition of $\phi_{(A, f)}$}.
\end{align*}

So injectivity of $\Psi$ yields the result.
\end{proof}

The next result follows immediately from Lemma \ref{magic square lemma} and Proposition \ref{easy CDGs}.

\begin{proposition}\label{T^bullet CDG}
Let $T$ and $b \in T_1$ be as defined above. Define the map $d_T: T \to T$ by \[d_T(t) = bt - (-1)^{|t|} tb \quad \text{for all homogeneous } t \in T,\] and let $\theta_T = f \tsr f^{!}$. Then $T^{\bullet} = (T, d_T, \theta_T)$ is a CDG-algebra. 
\end{proposition}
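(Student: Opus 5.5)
The proposition is a direct specialization of Proposition \ref{easy CDGs}, so the plan is to apply that result to the graded algebra $B = T = A \tsr \bar{A}^!$ with the element $b = \sum_k x_k \tsr \l_k$. First I will check that the hypotheses of Proposition \ref{easy CDGs} hold. $T$ is a $\Z$-graded (indeed $\N$-graded) associative $\k$-algebra via $T_n = A \tsr \bar{A}^!_n$. Since each $\l_k \in V^* = \bar{A}^!_1$, we have $b \in A \tsr \bar{A}^!_1 = T_1$. The element $b$ does not depend on the choice of dual bases, as noted before Lemma \ref{magic square lemma}, so $d_T$ is canonically defined.

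With $B = T$, the map $d_b$ of Proposition \ref{easy CDGs} is exactly $d_T(t) = bt - (-1)^{|t|}tb$. That proposition then gives that $(T, d_T, b^2)$ is a CDG-algebra. In particular, $d_T$ is an odd derivation of degree $1$, $d_T(b^2) = 0$, and $d_T^2 = [b^2, -\,]$.

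The only remaining step is to identify the curvature. Lemma \ref{magic square lemma} says $b^2 = f \tsr f^!$ in $T$, so $b^2 = \theta_T$ and the triple is $(T, d_T, \theta_T)$ as claimed.

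There is no real obstacle here; the substantive work was done in Lemma \ref{magic square lemma}. One point deserves care. The grading on $T$ uses only the $\bar{A}^!$-degree, so $|x \tsr \alpha| = |\alpha|$ and $A$ sits in degree $0$. The signs in $d_T$ are computed with respect to this grading, and Proposition \ref{easy CDGs} holds for an arbitrary $\Z$-grading, so nothing further is needed.
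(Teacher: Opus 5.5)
Your proposal is correct and is exactly the paper's argument, which derives the proposition directly from Proposition~\ref{easy CDGs} applied to $B = T$ and $b = \sum_k x_k \tsr \l_k \in T_1$, with Lemma~\ref{magic square lemma} identifying the curvature $b^2$ with $f \tsr f^!$. Your remark that signs are taken with respect to the $\bar{A}^!$-grading $T_n = A \tsr \bar{A}^!_n$ is consistent with how the paper grades $T$.
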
 

\subsection{The functor ${\mc F} : {\tt GrMod}\text{-}\bar{A}^! \to {\tt Ho}({\tt CDGMod}\text{-}T^{\bullet})$}

We continue to consider the CDG-algebra $T^{\bullet} = (T, d_T, \theta_T)$ from Proposition \ref{T^bullet CDG}, where $T = A \tsr \bar{A}^!$, and the element $b \in T_1$, the map $d_T$ and $\theta_T = f \tsr f^!$ are as described above. We proceed to construct a functor ${\mc F} : {\tt GrMod}\text{-}\bar{A}^! \to {\tt Ho}({\tt CDGMod}\text{-}T^{\bullet})$. 

To define ${\mc F}$ on objects, consider a graded module  $M = \bigoplus_{i \in \Z} M_i$ in ${\tt GrMod}\text{-} \bar{A}^!$. Define the $\Z$-graded vector space ${\mc T}(M)$ by \[{\mc T}(M) = \bigoplus_{i \in \Z} {\mc T}(M)_i = \bigoplus_{i \in \Z} A(i) \tsr M_i.\] The reason for the somewhat strange shifted module $A(i)$ will become apparent in Section \ref{nmfs}. There is an obvious right action of $T$ on ${\mc T}(M)$ and it is clear that this makes ${\mc T}(M)$ into a graded right $T$-module. We define a degree $1$ linear map $d_{{\mc T}(M)} : {\mc T}(M) \to {\mc T}(M)$ by \[d_{{\mc T}(M)}(a \tsr m) = (-1)^{|m|+1} (a \tsr m)b \quad \text{for all } a \in A, \text{ and homogeneous } m \in M.\]
Then by Proposition \ref{easy CDG modules}, the pair $({\mc T}(M), d_{{\mc T}(M)})$ is an object of the homotopy category ${\tt Ho}({\tt CDGMod}\text{-}T^{\bullet})$ and we set ${\mc F}(M) = ({\mc T}(M), d_{{\mc T}(M)})$. 

We wish to explain, from another point of view, that the definition of $d_{{\mc T}(M)}$ is a natural one. Let $\p_i^M: M_i \tsr V^* \to M_{i+1}$ be the right action map, and recall the definition of the adjoint map $\Delta(p_i^M): M_i \to V \tsr M_{i+1}$, see subsection \ref{vector spaces}.  We define the map $\phi_i^M: \mc{T}(M)_i \to \mc{T}(M)_{i+1}$ to be the composite
\[\mc{T}(M)_i = A(i) \tsr M_i \xrightarrow{{\rm id} \tsr \Delta(\rho_i^M)} A(i) \tsr V \tsr M_{i+1} \xrightarrow{\mu_A \tsr {\rm id}} A(i+1) \tsr M_{i+1} = \mc{T}(M)_{i+1},\] 
where $\mu_A: A \tsr V \to A$ is multiplication in the algebra $A$. Using equation \ref{Delta} it is easy to check that $\phi_i^M$ is right multiplication by $b \in T_1$.  

Now we turn to defining ${\mc F}$ on morphisms. Let $M$, $M'$ be in ${\tt GrMod}\text{-}\bar{A}^!$, and suppose that $\psi: M \to M'$ is a morphism in ${\tt GrMod}\text{-}\bar{A}^!$. We consider the graded map $\id \tsr \psi: {\mc T}(M) \to {\mc T}(M')$ whose $i$th component is $\id_{A(i)} \tsr \psi_i: {\mc T}(M)_i \to {\mc T}(M')_i$. It is straightforward to check that $\id \tsr \psi \in Z^0(\Hom_T({\mc F}(M), {\mc F}(M')))$, and we define ${\mc F}(\psi) = [\id \tsr \psi]$ to be the class of $\id \tsr \psi$ in $H^0(\Hom_T({\mc F}(M), {\mc F}(M')))$.

\begin{theorem}\label{the functor F}
Let $(A, f) \in {\tt Quad}\text{-}{\tt QHS}$ and let $T^{\bullet} = (T, d_T, f \tsr f^!)$ be the CDG-algebra given in Proposition \ref{T^bullet CDG}. The construction above defines a faithful functor \[{\mc F}: {\tt GrMod}\text{-}\bar{A}^! \to {\tt Ho}({\tt CDGMod}\text{-}T^{\bullet}).\]
\end{theorem}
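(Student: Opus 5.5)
The plan is to verify the three ingredients in turn: $\mc F$ is well defined on morphisms, $\mc F$ is a functor, and $\mc F$ is faithful. Faithfulness should come almost for free once the coboundaries are identified.

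\emph{Well-definedness.} For $\psi\colon M\to M'$ in ${\tt GrMod}\text{-}\bar{A}^!$, the map $\id\tsr\psi\colon \mc T(M)\to\mc T(M')$ is right $T$-linear, because $\psi$ is $\bar A^!$-linear and the $A$-factor is untouched; it also preserves the $T$-grading, which is indexed by the $M$-degree. Closedness is then immediate. Since $d_{\mc T(M)}$ is, up to the sign $(-1)^{|m|+1}$, right multiplication by $b$, and $\id\tsr\psi$ preserves degrees, we get
\[
d_{\mc T(M')}(\id\tsr\psi)=(\id\tsr\psi)d_{\mc T(M)}.
\]
Hence $d(\id\tsr\psi)=0$, so $\id\tsr\psi\in Z^0$.

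\emph{Functoriality.} We have $\id\tsr\id_M=\id_{\mc T(M)}$ and $(\id\tsr\psi')(\id\tsr\psi)=\id\tsr\psi'\psi$ already at the level of $Z^0$. These identities therefore descend to $H^0$, and $\mc F$ is a functor.

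\emph{Faithfulness.} The key step is to compute $B^0(\Hom_T(\mc F(M),\mc F(M')))$, and I expect it to vanish. Let $h\in\Hom_T(\mc T(M),\mc T(M'))_{-1}$, so $h$ is $T$-linear and sends $\mc T(M)_i$ to $\mc T(M')_{i-1}$. Taking $i=-1$ in the formula for the differential gives $d(h)=d_{\mc T(M')}h+h\,d_{\mc T(M)}$. For homogeneous $x\in\mc T(M)_i$ we have $h(x)\in\mc T(M')_{i-1}$, so
\[
d_{\mc T(M')}(h(x))=(-1)^{i}h(x)b .
\]
On the other hand, $T$-linearity of $h$ gives
\[
h(d_{\mc T(M)}x)=(-1)^{i+1}h(xb)=(-1)^{i+1}h(x)b .
\]
These two terms cancel, so $d(h)=0$ and $B^0=0$. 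Thus $H^0=Z^0$, and $\mc F(\psi)=0$ forces $\id\tsr\psi=0$ on the nose. Evaluating on $1\tsr m\in A(i)\tsr M_i$ with $1\in A_0=\k$ gives $1\tsr\psi(m)=0$, so $\psi(m)=0$ for all $m$, and $\mc F$ is faithful.

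The main obstacle is the sign convention for morphisms in ${\tt CDGMod}\text{-}T^\bullet$. The argument above uses that degree $-1$ elements of $\Hom_T$ commute strictly with the right $T$-action, which is how $\Hom_B(N_1,N_2)$ is defined in the paper. If a Koszul sign convention were intended instead, say $h(xt)=(-1)^{|t|}h(x)t$, the cancellation would have to be rechecked. In that case I would fall back on a direct argument: suppose $\id\tsr\psi=d_{\mc T(M')}h+h\,d_{\mc T(M)}$, and compare the internal $A$-degree components of both sides on elements $1\tsr m$. Right multiplication by $b$ strictly raises the $A$-degree by one, so the degree-$0$ part of the $A$-component on the right-hand side is a sum of terms that already contain a factor of $V$. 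That part therefore cannot equal $1\tsr\psi(m)$ unless $\psi(m)=0$. This again yields faithfulness.
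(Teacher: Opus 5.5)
Your proof is correct and is essentially the paper's argument. The paper likewise uses strict $T$-linearity to show that any degree $-1$ map $\sigma$ with $d\sigma=\id\tsr\psi$ satisfies $d_{{\mc F}(M')}\sigma+\sigma d_{{\mc F}(M)}=0$, which is your observation that $B^0=0$, and then concludes $\psi=0$. Your fallback for a Koszul sign convention is not needed, since the paper defines $\Hom_B(N_1,N_2)$ to consist of strictly $B$-linear graded maps.
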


\begin{proof}
It is clear that ${\mc F}$ preserves identity morphisms and composition, so ${\mc F}$ is a functor. To check that ${\mc F}$ is faithful, note that both ${\tt GrMod}\text{-}\bar{A}^!$ and ${\tt Ho}({\tt CDGMod}\text{-}T^{\bullet})$ are $\k$-linear categories. Let $\psi: M \to M'$ be a morphism in ${\tt GrMod}\text{-}\bar{A}^!$ and suppose that ${\mc F}(\psi) = 0$ in ${\tt Ho}({\tt CDGMod}\text{-}T^{\bullet})$. This means that there is some map $\s \in \Hom_T({\mc F}(M), {\mc F}(M'))_{-1}$ such that $d \s = \id \tsr \psi$. So we have \[d_{{\mc F}(M')} \s + \s d_{{\mc F}(M)} = \id \tsr \psi.\] Evaluating the left hand side of the last equation at $a \tsr m \in A(i) \tsr M_i$ yields \[(-1)^i \s(a \tsr m)b + \s((-1)^{i+1}(a \tsr m)b) = 0,\] where the last equality holds since $\s$ preserves the right action of $T$ on ${\mc T}(M)$. Therefore $\id \tsr \psi = 0$ and $\psi = 0$ follows since the functor $A \tsr_{\k} \underline{\ \ } $ is exact. Hence ${\mc F}$ is faithful. 
\end{proof}

The categories ${\tt GrMod}\text{-}\bar{A}^!$ and ${\tt Ho}({\tt CDGMod}\text{-}T^{\bullet})$ are both additive $\k$-linear categories, so the following result follows immediately from the fact that the functor  ${\mc F}: {\tt GrMod}\text{-}\bar{A}^! \to {\tt Ho}({\tt CDGMod}\text{-}T^{\bullet})$ is faithful.

\begin{corollary}\label{F is nontrivial}
Let $(A, f) \in {\tt Quad}\text{-}{\tt QHS}$ and let $T^{\bullet} = (T, d_T, f \tsr f^!)$ be the CDG-algebra given above. For $M \in {\tt GrMod}\text{-}\bar{A}^!$, ${\mc F}(M) = 0$ in ${\tt Ho}({\tt CDGMod}\text{-}T^{\bullet})$ if and only if $M = 0$ in ${\tt GrMod}\text{-}\bar{A}^!$.
\end{corollary}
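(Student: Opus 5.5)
The plan is to derive the corollary directly from faithfulness of ${\mc F}$ (Theorem \ref{the functor F}), using that in any additive $\k$-linear category an object is zero exactly when its identity morphism is zero.

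\textbf{The easy direction.} Suppose $M = 0$. Then ${\mc T}(M) = \bigoplus_i A(i) \tsr M_i = 0$ as a graded right $T$-module. Hence ${\mc F}(M)$ is the zero CDG-module, which is a zero object of ${\tt Ho}({\tt CDGMod}\text{-}T^{\bullet})$. (Alternatively, additive functors preserve zero objects, and ${\mc F}$ is clearly additive since $A\tsr_{\k}(-)$ is.)

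\textbf{The converse.} Suppose ${\mc F}(M) \cong 0$ in ${\tt Ho}({\tt CDGMod}\text{-}T^{\bullet})$.

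\begin{enumerate}
\item The identity $\id_{{\mc F}(M)}$ factors through the zero object, so it is the zero morphism in the homotopy category. Concretely, $\id_{{\mc F}(M)}$ lies in $B^0(\Hom_T({\mc F}(M),{\mc F}(M)))$.
\item Since ${\mc F}$ is a functor, ${\mc F}(\id_M) = [\id \tsr \id_M] = \id_{{\mc F}(M)} = 0$.
\item By faithfulness of ${\mc F}$ we have ${\mc F}(\id_M) = {\mc F}(0)$, and therefore $\id_M = 0$.
\item Hence $M = 0$.
\end{enumerate}

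\textbf{Where care is needed.} The only step requiring attention is the identification of ``zero object'' in the homotopy category with ``identity is null-homotopic.'' This holds in any additive category: an object $X$ is zero if and only if $\id_X = 0$, because then every morphism into or out of $X$ equals its composite with $\id_X$, hence is zero. So nothing beyond Theorem \ref{the functor F} is needed.

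One can also argue directly, and this does not even use faithfulness abstractly. The computation in the proof of Theorem \ref{the functor F} shows that $d\s = 0$ for every $\s$ of degree $-1$. So $B^0 = 0$, and $\id \tsr \id_M = 0$ forces $A \tsr M_i = 0$ for all $i$, hence $M = 0$ since $A \ne 0$.
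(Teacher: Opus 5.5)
Your proposal is correct and matches the paper's argument. The paper deduces the corollary directly from the faithfulness of ${\mc F}$ (Theorem \ref{the functor F}) together with the fact that both categories are additive $\k$-linear, which is exactly your argument via $\id_{{\mc F}(M)} = 0$. Your supplementary direct argument is also valid: between CDG-modules of the form $(N, d_{N,b})$, the Hom differential kills every degree $-1$ map, so $B^0 = 0$, and this is implicit in the paper's proof of that theorem.
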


Let ${\tt CDGMod}\text{-}T^{\bullet}_b$ be the full subcategory of ${\tt CDGMod}\text{-}T^{\bullet}$ whose objects are the right CDG-modules of the form $(N, d_{N, b})$ (as described in Proposition \ref{easy CDG modules}), where $b \in T_1$ is defined as above. Note that the functor ${\mc F}$ from Theorem \ref{the functor F} has its values in the category ${\tt CDGMod}\text{-}T^{\bullet}_b$ so we may consider ${\mc F}: {\tt GrMod}\text{-}\bar{A}^! \to {\tt CDGMod}\text{-}T^{\bullet}_b$. As we will now prove, this functor is left adjoint to a functor ${\mc G}: {\tt CDGMod}\text{-}T^{\bullet}_b \to {\tt GrMod}\text{-}\bar{A}^!$. 

We now proceed to define ${\mc G}$. For $(N, d_{N, b}) \in {\tt CDGMod}\text{-}T^{\bullet}_b$ define \[{\mc G}(N, d_{N, b}) = \Hom_T(T, N) = \bigoplus_{i \in \Z} \Hom_{{\tt GrMod}\text{-}T}(T, N(i)).\]
Then ${\mc G}(N, d_{N, b})$ is endowed with the structure of a graded right $\bar{A}^!$-module as follows. For $\phi \in \Hom_{{\tt GrMod}\text{-}T}(T, N(i))$ and $a^! \in \bar{A}^!$, using the canonical left action of $\bar{A}^!$ on $T$, define \[\phi a^!: T \to N \quad (\phi a^!)(t) = \phi(a^! t), \quad \text{for all } t \in T.\] 
If $\psi: (N_1, d_{N_1, b}) \to (N_2, d_{N_2, b})$ is a morphism in ${\tt CDGMod}\text{-}T^{\bullet}_b$, then we define \[{\mc G}(\psi): {\mc G}(N_1, d_{N_1, b}) \to {\mc G}(N_2, d_{N_2, b}), \quad {\mc G}(\psi)(\phi) = \psi \circ \phi \quad \text{for all } \phi \in {\mc G}(N_1, d_{N_1, b}).\] One now easily checks that this defines a functor ${\mc G}: {\tt CDGMod}\text{-}T^{\bullet}_b \to {\tt GrMod}\text{-}\bar{A}^!$.

To check that $({\mc F}, {\mc G})$ is an adjoint pair of functors, define maps $\tau$ and $\sigma$ as in 
\[\xymatrix{ \Hom_{{\tt CDGMod}\text{-}T^{\bullet}_b}({\mc F}(M), (N, d_{N, b})) \ar@<.5ex>[r]^-{\tau}  & \Hom_{{\tt GrMod}\text{-}\bar{A}^!}(M, {\mc G}(N, d_{N, b})) \ar@<.5ex>[l]^-{\sigma}}\]
by \begin{align*} 
[\tau(\phi)(m)](a \tsr a^!) &= \phi(a \tsr ma^!) \quad \text{for all } m \in M, a \in A, a^! \in \bar{A}^! \\
[\s(\psi)](a \tsr m) &= \psi(m)(a \tsr 1) \quad \text{for all } a \in A, m \in M.
\end{align*}
It is then straightforward to check that $\t$ and $\s$ are well-defined mutually inverse natural isomorphisms. 

\begin{proposition}\label{adjoint pair}
As defined above, the functors ${\mc F}: {\tt GrMod}\text{-}\bar{A}^! \to {\tt CDGMod}\text{-}T^{\bullet}_b$ and ${\mc G}: {\tt CDGMod}\text{-}T^{\bullet}_b \to {\tt GrMod}\text{-}\bar{A}^!$  form an adjoint pair $({\mc F}, {\mc G})$ of functors.  
\end{proposition}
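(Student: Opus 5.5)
The plan is to verify directly that the maps $\tau$ and $\sigma$ displayed before the statement are well defined, mutually inverse, and natural in both variables; this is the Hom–tensor adjunction in disguise. The point to keep in mind is that ${\tt CDGMod}\text{-}T^{\bullet}_b$ is regarded as an ordinary category: its morphisms are degree-$0$ $T$-linear maps commuting with the differentials. Because every object has differential $d_{N,b}(n)=(-1)^{|n|+1}nb$, any degree-$0$ $T$-linear map $\phi$ automatically satisfies $d_{N,b}\phi=\phi\, d_{{\mc T}(M)}$. Indeed,
\[
\phi\big((-1)^{|x|+1}xb\big)=(-1)^{|x|+1}\phi(x)b,
\]
and $|\phi(x)|=|x|$. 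So the morphisms are exactly the graded right $T$-module maps ${\mc T}(M)\to N$, and the differentials impose no extra constraint.

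Well-definedness of $\tau$ comes first. For $\phi:{\mc T}(M)\to N$ and homogeneous $m\in M_i$, the map $\tau(\phi)(m): T\to N$, $a\tsr a^!\mapsto \phi(a\tsr ma^!)$, needs two checks.
\begin{itemize}
\item It is right $T$-linear. Since $T=A\tsr\bar{A}^!$ with commuting factors, we have $(a\tsr ma^!)(a'\tsr a'^!)=aa'\tsr ma^!a'^!$, and right linearity follows from $T$-linearity of $\phi$.
\item It lies in $\Hom_{{\tt GrMod}\text{-}T}(T,N(i))$. This is a bookkeeping check of the degree convention ${\mc T}(M)_j=A(j)\tsr M_j$ against $T_n=A\tsr\bar{A}^!_n$: the element $a\tsr ma^!$ sits in ${\mc T}(M)_{i+|a^!|}$, so the map shifts degree by $i$.
\end{itemize}
Next, $m\mapsto\tau(\phi)(m)$ must be a degree-$0$ right $\bar{A}^!$-module map for the action $(\psi a^!)(t)=\psi(a^!t)$. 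Since $a^!$ commutes with $A$, we get $\tau(\phi)(mc^!)(a\tsr a^!)=\phi(a\tsr mc^!a^!)=[\tau(\phi)(m)c^!](a\tsr a^!)$.

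Dually, for $\sigma$: given $\psi:M\to{\mc G}(N)$, the map $a\tsr m\mapsto\psi(m)(a\tsr1)$ is $\k$-bilinear, so it gives a map ${\mc T}(M)\to N$, and it is degree $0$ by the same shift bookkeeping. For $T$-linearity I will compute
\[
\sigma(\psi)\big((a\tsr m)(a'\tsr a'^!)\big)=\psi(ma'^!)(aa'\tsr1)=\psi(m)(a'^!\,aa'\tsr 1)=\psi(m)(aa'\tsr a'^!),
\]
and then rewrite this as $\psi(m)\big((a\tsr1)(a'\tsr a'^!)\big)=\psi(m)(a\tsr1)\,(a'\tsr a'^!)$. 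By the observation in the first paragraph, $\sigma(\psi)$ is therefore a morphism of CDG-modules.

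Mutual inverseness is then two one-line computations. First, $\sigma\tau(\phi)(a\tsr m)=\phi(a\tsr m)$. Second,
\[
\tau\sigma(\psi)(m)(a\tsr a^!)=\psi(ma^!)(a\tsr1)=\psi(m)(a\tsr a^!),
\]
using the left $\bar{A}^!$-action and commutativity of the two tensor factors. Naturality in $M$ and in $N$ is immediate from the formulas: precomposition with $\id\tsr\alpha$ for $\alpha:M'\to M$, and postcomposition with a CDG-morphism $N\to N'$, both commute with $\tau$. The step I expect to need the most care is the grading and sign bookkeeping: matching the shifts $A(i)$ in ${\mc T}(M)$ with the shift $N(i)$ in ${\mc G}$, and confirming that no Koszul signs arise when $\bar{A}^!$ acts through $T$ on the left. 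I would check these by writing degrees explicitly for $a\in A_p$ and $a^!\in\bar{A}^!_q$.
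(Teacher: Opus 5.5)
Your proposal is correct and follows the same route as the paper, which simply asserts that $\tau$ and $\sigma$ are well-defined, mutually inverse and natural. Your explicit checks fill in exactly the details the paper leaves to the reader. This includes the key observation that every degree-$0$ $T$-linear map between objects of the form $(N,d_{N,b})$ automatically commutes with the differentials, so $\sigma(\psi)$ really is a morphism in ${\tt CDGMod}\text{-}T^{\bullet}_b$.
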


\begin{question}\label{muse on derived equivalence}
We have checked that the functor ${\mc G}: {\tt CDGMod}\text{-}T^{\bullet}_b \to {\tt GrMod}\text{-}\bar{A}^!$ also induces a functor ${\mc G}: {\tt Ho}({\tt CDGMod}\text{-}T^{\bullet}_b) \to {\tt GrMod}\text{-}\bar{A}^!$. It would interesting if the pair $({\mc F}, {\mc G})$ can be upgraded to an equivalence of categories by passing to some kind of derived categories. Based on noncommutative algebraic geometry, one could try upgrading ${\tt GrMod}\text{-}\bar{A}^!$ to the bounded derived category ${\rm D}^{\rm b}({\tt Tails} \, \bar{A}^!)$. It is less clear how one might go about ``deriving" the category ${\tt CDGMod}\text{-}T^{\bullet}$ since the differential, $d_N$, in a CDG-module, $(N, d_N)$, need not square to zero, hence the notion of a quasi-isomorphism of CDG-modules does not make sense. Positselski and Becker have proposed several candidates for derived categories of CDG-modules (or categories of matrix factorizations), see \cite{Becker, Positselski}. So we have the following question. Can the functor ${\mc F}: {\tt GrMod}\text{-}\bar{A}^! \to {\tt Ho}({\tt CDGMod}\text{-}T^{\bullet})$ be extended to a category equivalence \[{\mc F}: {\rm D}^{\rm b}({\tt Tails} \, \bar{A}^!) \to {\rm D}({\tt CDGMod}\text{-}T^{\bullet}),\] where ${\rm D}({\tt CDGMod}\text{-}T^{\bullet})$ is one of Positselski's or Becker's candidates?
\end{question}

\subsection{Noncommutative graded left matrix factorizations}\label{nmfs}

In this section we consider $(A, f) \in {\tt Quad}\text{-}{\tt QHS}$ and assume that the algebra $A$ satisfies the strong left rank condition, and that the map $\p_f: A \to A$ is injective. 
\begin{definition}\label{category I}
Let ${\tt P}(\bar{A}^!, f^!)$ denote the full subcategory of ${\tt GrMod}\text{-}\bar{A}^!$ consisting of graded right locally finite-dimensional $\bar{A}^!$-modules $M = \oplus_{i \in \Z} M_i$ such that the map $\p_{f^!}: M_i \to M_{i+2}$ is a $\k$-linear isomorphism for all $i \in \Z$.
\end{definition}
We will now explain how the functor ${\mc F}: {\tt GrMod}\text{-}\bar{A}^! \to {\tt Ho}({\tt CDGMod}\text{-}T^{\bullet})$ from Theorem \ref{the functor F} may also be considered to be a functor 
${\mc F}: {\tt P}(\bar{A}^!, f^!) \to {\tt NMF}_{\Z}^A(f)$, where ${\tt NMF}_{\Z}^A(f)$ is the category of noncommutative graded left matrix factorizations of $f$ as defined in \cite{Mori-Ueyama-1}. We now proceed to give the definition of the category ${\tt NMF}_{\Z}^A(f)$.

Let ${\tt Fun}(\Z, A\text{-}{\tt GrMod})$ denote the usual functor category, where $\Z$ is the poset ($\leq$) category of the integers. So an object of ${\tt Fun}(\Z, A\text{-}{\tt GrMod})$ is a sequence of graded left $A$-module homomorphisms, $\{\phi_i : F_i \to F_{i+1}\}_{i \in \Z}$, and a morphism of ${\tt Fun}(\Z, A\text{-}{\tt GrMod})$, say $\eta: \{\phi_i : F_i \to F_{i+1}\}_{i \in \Z} \to \{\g_i : G_i \to G_{i+1}\}_{i \in \Z}$, is a sequence of graded left $A$-module homomorphisms $\eta = \{\eta_i: F_i \to G_i\}_{i \in \Z}$ such that the diagram
\[\xymatrix{&F_i \ar[r]^-{\phi_i} \ar[d]_-{\eta_i} & F_{i+1} \ar[d]^{\eta_{i+1}} \\ & G_i \ar[r]^-{\g_{i}} & G_{i+1}
}
\]
commutes for all $i \in \Z$. 

\begin{definition}\label{category of matrix factorizations}
Let $(A, f) \in {\tt Quad\text{-}QHS}$. Following \cite{Mori-Ueyama-1}, let ${\tt NMF}_{\Z}^A(f)$ denote the full subcategory of ${\tt Fun}(\Z, A\text{-}{\tt GrMod})$ whose objects are sequences of graded left $A$-module homomorphisms $\{\phi_i : F_i \to F_{i+1}\}_{i \in \Z}$ such that there exists some $r \in \N$ and graded left $A$-module isomorphisms $\{\nu_i: F_i \to \oplus_{j = 1}^rA(-d_{i, j})\}_{i \in \Z}$, for some $d_{i, j} \in \Z$, so that the diagram 
\[\xymatrix{&F_i \ar[r]^-{\phi_{i+1} \phi_i} \ar[d]_-{\nu_i} & F_{i+2} \ar[d]^{\nu_{i+2}} \\ & \oplus_{j= 1}^r A(-d_{i,j}) \ar[r]^-{\p_f} & \oplus_{j=1}^r A(-d_{i, j} + 2)
}
\]
commutes for all $i \in \Z$. An object of ${\tt NMF}_{\Z}^A(f)$ is called a \emph{noncommutative graded left matrix factorization of $f$}. 
\end{definition}


%

As we will now prove, the functor ${\mc F}: {\tt GrMod}\text{-}\bar{A}^! \to {\tt Ho}({\tt CDGMod}\text{-}T^{\bullet})$ from Theorem \ref{the functor F} determines noncommutative graded left matrix factorizations of $f$ in the following  precise sense. 

\begin{theorem}
\label{F functor valued in NMF}
Let $(A, f ) \in {\tt Quad\text{-}QHS}$ and assume that $A$ satisfies the left strong rank condition, and that the map $\p_{f} : A \to A$ is injective. Recall the category ${\tt P}(\bar{A}^!, f^!)$ from Definition \ref{category I}. Then there is a functor ${\mc F}: {\tt P}(\bar{A}^!, f^!) \to {\tt NMF}_{\Z}^A(f)$. 
\end{theorem}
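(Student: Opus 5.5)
Given $M \in {\tt P}(\bar{A}^!, f^!)$, the plan is to view $\mc{T}(M)$ as a sequence of graded left $A$-modules $F_i := \mc{T}(M)_i = A(i) \tsr M_i$, with the maps $\phi_i = \phi_i^M : F_i \to F_{i+1}$ defined above. Thus $\phi_i$ is the composite of $\id \tsr \Delta(\p_i^M)$ with multiplication $\mu_A \tsr \id$, which is right multiplication by $b$.

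\textbf{Step 1: each $\phi_i$ is a map of graded left $A$-modules.} Since $A$ and $\bar{A}^!$ commute in $T$, right multiplication by $b = \sum_k x_k \tsr \l_k$ sends $a \tsr m$ to $\sum_k a x_k \tsr m\l_k$. This formula is visibly left $A$-linear. It is also degree preserving for the shifted gradings, because $x_k$ raises $A$-degree by one while the shift passes from $A(i)$ to $A(i+1)$; this is exactly why the twist $A(i)$ was introduced.

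\textbf{Step 2: the composite $\phi_{i+1}\phi_i$ is right multiplication by $f$.} The composite is right multiplication by $b^2$, which equals $f \tsr f^!$ by Lemma \ref{magic square lemma}. So $\phi_{i+1}\phi_i(a \tsr m) = af \tsr mf^!$.

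\textbf{Step 3: the freeness data.} Since $M$ is locally finite-dimensional, choose a basis of $M_i$ for $i=0,1$. Transport these bases along the isomorphisms $\p_{f^!} : M_i \to M_{i+2}$ (and their inverses) to get bases of every $M_i$. With these bases, $F_i \cong A(i)^{\dim M_i}$ is graded free, so $d_{i,j} = -i$ for all $j$. Under these identifications $\phi_{i+1}\phi_i$ becomes exactly $\p_f$ on $\oplus_j A(i) \to \oplus_j A(i+2)$, since $af \tsr (m f^!)$ corresponds to $af$ in the coordinate of the transported basis vector.

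\textbf{Main obstacle: a uniform number of summands $r$.} Definition \ref{category of matrix factorizations} requires one $r$ for all $i$. The transport argument only gives $\dim M_i = \dim M_{i+2}$, and a priori $\dim M_0 \neq \dim M_1$. This is where I would use the two hypotheses. Right multiplication by $b$ together with the relation $b^2 = f \tsr f^!$ gives injective maps of finitely generated free left $A$-modules $A^{\dim M_0} \to A^{\dim M_1} \to A^{\dim M_2}$, whose composite is $\p_f$ on $A^{\dim M_0}$. Injectivity of $\p_f$ on $A$ makes the composite injective, hence the first map is injective. The left strong rank condition then gives $\dim M_0 \le \dim M_1$. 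Repeating the argument starting from index $1$ gives $\dim M_1 \le \dim M_2 = \dim M_0$. So all the $\dim M_i$ equal a common $r$, finite because $M$ is locally finite-dimensional. One must ensure the relevant maps can be viewed as maps between modules with the same shift, or argue after forgetting the grading; the strong rank condition applies to ungraded free modules, so forgetting the grading is harmless.

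\textbf{Step 4: functoriality.} For morphisms, send $\psi : M \to M'$ to $\{\id_{A(i)} \tsr \psi_i\}$. This family commutes with right multiplication by $b$ because $\psi$ is $\bar{A}^!$-linear. Compatibility with identities and compositions is immediate, which gives a functor ${\mc F} : {\tt P}(\bar{A}^!, f^!) \to {\tt NMF}_{\Z}^A(f)$.
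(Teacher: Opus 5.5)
Your proposal is correct and follows the paper's proof. Both transport bases of $M_0$ and $M_1$ along powers of $\rho_{f^!}$ to identify $\mathcal{T}(M)_i$ with $\bigoplus_j A(i)$ (so $d_{i,j}=-i$), and both use $b^2 = f \otimes f^!$ from Lemma \ref{magic square lemma} so that $\phi_{i+1}^M\phi_i^M$ becomes $\rho_f$; injectivity of $\rho_f$ then makes each $\phi_i^M$ an injective map of finite free modules, and the left strong rank condition forces $\dim M_0=\dim M_1$. Your functoriality check, that $\{\id\otimes\psi_i\colon \mathcal{T}(M)_i\to\mathcal{T}(M')_i\}$ commutes with right multiplication by $b$, is also correct.
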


\begin{proof}

It is convenient to define a function $\s: \Z \to \Z$ by \[\s(i) = \begin{cases} i/2 & i \equiv 0(2)\\
(i-1)/2 & i \equiv 1(2).
\end{cases}
\] Then $\s(i+2) = \s(i)+1$ for all $i \in \Z$. 

Let $M = \oplus_{i \in \Z} M_i$ be an object of ${\tt P}(\bar{A}^!, f^!)$. By assumption, there exist $r, s \in \N$ such that \[\dim_{\k} M_i = \begin{cases} r & i \equiv 0(2) \\
s & i \equiv 1(2).
\end{cases}
\] 
Fix bases $\{m_{0, j}\}_{1 \leq j \leq r}$ and $\{m_{1, j}\}_{1 \leq j \leq s}$ for $M_0$ and $M_1$, respectively. Recall from above that ${\mc T}(M)_i = A(i) \tsr M_i$ and the map $\phi_i^M: {\mc T}(M)_i \to {\mc T}(M)_{i+1}$ is right multiplication by $b = \sum_k x_k \tsr \l_k \in A \tsr \bar{A}^!$. We consider ${\mc T}(M)_i$ to be a $\Z$-graded vector space by declaring that its $n$th graded piece is \[({\mc T}(M)_i)_n = (A(i))_n \tsr M_i.\] It is then clear that ${\mc T}(M)_i$ is a graded free finitely generated left $A$-module under the obvious left action of $A$. Moreover, the map $\phi_i^M$ is a degree $0$ morphism of graded left $A$-modules, i.e., $\phi_i^M$ is a morphism in $A\text{-}{\tt GrMod}$, which finally explains the choice of the shifted module $A(i)$ in the definition of ${\mc T}(M)_i$.

 We have the following left $A$-basis for ${\mc T}(M)_i$: $\{1 \tsr \p_{f^!}^{\s(i)} m_{0, j}\}_{1 \leq j \leq r}$ for $i \equiv 0(2)$ and $\{1 \tsr \p_{f^!}^{\s(i)} m_{1, j}\}_{1 \leq j \leq s}$ for $i \equiv 1(2)$. Using this basis we define a left $A$-module homomorphism \[\nu_i: {\mc T}(M)_i \to \bigoplus_{j} A(i), \quad \nu_i(1 \tsr \p_{f^!}^{\s(i)} m_{k, j}) = e_{i, j}, \quad \text{for all } j,\]
where $k = 0, 1$ and $e_{i, j} \in \oplus_j A(i)$ is the $j$th standard basis vector. It is clear that $\nu_i$ is a graded left $A$-module isomorphism for all $i \in \Z$. Furthermore, using the fact that $\phi_{i+1}^M \phi_{i}^M = \p_{f \tsr f^!}$, see Lemma \ref{magic square lemma}, it is routine to check that the diagram 
\[\xymatrix{& {\mc T}(M)_i \ar[r]^-{\phi_{i+1}^M \phi_i^M} \ar[d]_-{\nu_i} &  {\mc T}(M)_{i+2} \ar[d]^{\nu_{i+2}} \\ &  \bigoplus_{j}A(i) \ar[r]^-{\p_{f}} &  \bigoplus_{j} A(i+2)
}
\]
commutes for all $i \in \Z$. 

To conclude that the sequence $\{\phi_i^M: {\mc T}(M)_i \to {\mc T}(M)_{i+1}\}_{i \in \Z}$ is a noncommutative graded left matrix factorization of $f$ we need to prove that $r = s$. The assumption that $\p_f: A \to A$ is injective together with the commutativity of the last diagram shows that $\phi_i^M: {\mc T}(M)_i \to {\mc T}(M)_{i+1}$ is an injective morphism of finite rank free left $A$-modules. Then the assumption that $A$ has the left strong rank condition yields $r \leq s$ and $s \leq r$. We set \[{\mc F}(M) = \{\phi_i^M: {\mc T}(M)_i \to {\mc T}(M)_{i+1}\}_{i \in \Z} \in {\tt NMF}_{\Z}^A(f).\]

As in the construction above, for a morphism $\psi: M \to M'$ in ${\tt GrMod}\text{-}\bar{A}^!$, the collection of maps ${\mc F}(\psi) = \{\id \tsr \psi_i: {\mc T}(M)_i \to {\mc T}(M)_{i+1}\}_{i \in \Z}$ is a morphism in ${\tt NMF}_{\Z}^A(f)$. It is then clear that we have defined a functor \[{\mc F}: {\tt P}(\bar{A}^!, f^!) \to {\tt NMF}_{\Z}^A(f).\]


\end{proof}

\begin{remark}\label{category of fat points}
If $(A, f) \in {\tt Quad}\text{-}{\tt QHS}$, the algebra $A$ satisfies the left strong rank condition, and the map $\p_{f}: A \to A$ is injective, then the proof of Theorem \ref{F functor valued in NMF} shows that $\dim_{\k} M_i$ is a constant function of $i$ for objects $M = \bigoplus_{i \in \Z} M_i$ in the category ${\tt P}(\bar{A}^!, f^!)$. Recall that for a connected graded algebra, graded modules for which the dimensions of the graded pieces are constant are sometimes called \emph{point modules} or \emph{fat point modules}. This is why we chose to use ${\tt P}$ in the notation ${\tt P}(\bar{A}^!, f^!)$, to draw a connection to \emph{fat point modules} over the algebra $\bar{A}^!$.
\end{remark}

In \cite{Mori-Ueyama-1} the authors also defined a certain quotient category of ${\tt NMF}_{\Z}^A(f)$ in which trivial matrix factorizations become isomorphic to the zero object. We adapt the definition of this quotient category in the following way. Let $F \in A\text{-}{\tt grmod}$ be a free module. Define objects $\phi^F = \{\phi^F_i\}_{i \in \Z}$ and $^F \phi = \{^F \phi_i\}_{i \in \Z}$ of ${\tt NMF}_{\Z}^A(f)$ by 
\[\phi_i^F = \begin{cases} \id: F(i) \to F(i) \quad &i \equiv 0(2) \\ \p_f: F(i-1) \to F(i+1) \quad & i\equiv 1(2), \end{cases}\]
\[^F\phi_i = \begin{cases} \p_f: F(i) \to F(i+2) \quad &i \equiv 0(2) \\ \id: F(i+1) \to F(i+1) \quad & i\equiv 1(2). \end{cases}\]
Recall that ${\tt NMF}_{\Z}^A(f)$ is a $\k$-linear additive category, so finite direct sums exist, and such sums are also direct products. Consider the full subcategory of ${\tt NMF}_{\Z}^A(f)$ defined by \[{\tt H} = \big\{\phi^F \oplus {^G\phi} : F, G \in A\text{-}{\tt grmod} \text{ are free}\big\}.\] By definition, the quotient category ${\underline {\tt NMF}}_{\Z}^A(f) = {\tt NMF}_{\Z}^A(f)/{\tt H}$ has the same objects as ${\tt NMF}_{\Z}^A(f)$ with morphisms given by \[\Hom_{{\underline {\tt NMF}}_{\Z}^A(f)}(\phi, \phi') = \Hom_{{\tt NMF}_{\Z}^A(f)}(\phi, \phi')/{\tt H}(\phi, \phi'),\] where ${\tt H}(\phi, \phi')$ is the subgroup of morphisms $\phi \to \phi'$ factoring through an object in ${\tt H}$. We now consider the functor ${\mc F}: {\tt P}(\bar{A}^!, f^!) \to {\tt NMF}_{\Z}^A(f)$ constructed in Theorem \ref{F functor valued in NMF}. By abuse of notation we write ${\mc F}: {\tt P}(\bar{A}^!, f^!) \to {\underline {\tt NMF}}_{\Z}^A(f)$ for the composition of ${\mc F}$ with the canonical quotient functor. 

\begin{proposition}\label{F functor valued in stable NMF}
Let $(A, f ) \in {\tt Quad\text{-}QHS}$ and assume that $A$ satisfies the left strong rank condition, and that the map $\p_{f} : A \to A$ is injective. Then the functor ${\mc F}: {\tt P}(\bar{A}^!, f^!) \to {\underline {\tt NMF}}_{\Z}^A(f)$ is faithful. 
\end{proposition}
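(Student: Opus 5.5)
Let $\psi: M \to M'$ be a morphism in ${\tt P}(\bar{A}^!, f^!)$ with ${\mc F}(\psi) = 0$ in ${\underline {\tt NMF}}_{\Z}^A(f)$. This means that the morphism $\{\id \tsr \psi_i\}_{i\in\Z}$ factors through an object $\phi^F \oplus {^G\phi}$ of ${\tt H}$. The goal is to show $\psi = 0$. Since $A \tsr_{\k} -$ is exact, as in the proof of Theorem \ref{the functor F}, it suffices to show $\id \tsr \psi_i = 0$ for every $i$.

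The approach mirrors the faithfulness argument for the homotopy category. First I would show that any morphism factoring through ${\tt H}$ is ``null-homotopic'' in the matrix factorization sense. Concretely, if $\eta: \{\phi_i^M\} \to \{\phi_i^{M'}\}$ factors through $\phi^F$ (or ${^G\phi}$), then there are graded left $A$-module maps $h_i: {\mc T}(M)_{i} \to {\mc T}(M')_{i-1}$ with
\[\eta_i = \phi^{M'}_{i-1} h_i + h_{i+1}\phi^M_i \quad\text{for all } i.\]
To obtain this, I write the factorization $\eta = \beta\alpha$ with $\alpha: \{\phi^M_i\} \to \phi^F$ and $\beta: \phi^F \to \{\phi^{M'}_i\}$. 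I then use that $\phi^F$ alternates identities and $\p_f$. On odd indices, $\alpha_{i+1} = \alpha_i$-compatible components are determined through the identity maps, so one can set $h_i$ equal to $\beta_{i-1}\alpha_i$ in the appropriate parity and $0$ in the other. Checking the identity uses that $\phi^{M'}_{i+1}\phi^{M'}_i = \p_f$ in the chosen bases. Injectivity of $\p_f$ on free modules lets one cancel $f$ where needed. The same is done for ${^G\phi}$, and homotopies add over direct sums.

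Second, I would show that such a homotopy forces $\id\tsr\psi_i = 0$. Each $h_i$ is a left $A$-linear map $A(i)\tsr M_i \to A(i-1)\tsr M'_{i-1}$ of degree $0$, so it is determined by its values on $1\tsr m$. These values lie in the degree $-i$ part of $A(i-1)\tsr M'_{i-1}$, that is, in $A_{-1}\tsr M'_{i-1} = 0$. Hence every $h_i = 0$, and therefore $\id\tsr\psi_i = \eta_i = 0$. This degree argument is the key simplification. It replaces the sign cancellation used in Theorem \ref{the functor F}.

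The main obstacle is the first step: correctly producing the homotopies from a factorization through $\phi^F\oplus{^G\phi}$ with the right degree shifts. I expect that a morphism into $\phi^F$ is determined by its even components, $\alpha_{2k+1} = \alpha_{2k}\circ$-related data, via the identity maps. I would verify the formula one parity at a time.

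Alternatively, I would avoid homotopies entirely by degree counting directly. Any graded morphism ${\mc T}(M)_i = A(i)\tsr M_i \to F_i$ and back factors through generators, and the constraint that $\eta_i$ restricted to $1\tsr M_i$ lands in $\k\tsr M'_i$ should force the relevant composite through $\p_f$ to vanish. If the homotopy construction becomes messy, I would fall back on this route.
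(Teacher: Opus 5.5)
Your argument is correct and rests on the same key observation as the paper: the composite ${\mc T}(M)_{i+1}\to F_{i+1}=F_i\to{\mc T}(M')_i$ through an identity slot of $\phi^F$ (or ${^G\phi}$) is a degree-$0$ map out of a module generated in degree $-(i+1)$ into one that vanishes in that degree, hence it is zero. Packaging this as the homotopy $\eta_i=\phi^{M'}_{i-1}h_i+h_{i+1}\phi^M_i$ kills both parities at once, so you avoid the paper's appeal to injectivity of $\p_b$, and it handles $\phi^F\oplus{^G\phi}$ cleanly by additivity; note also that checking the homotopy identity uses only the naturality squares of $\alpha$ and $\beta$, not $\phi^{M'}_{i+1}\phi^{M'}_i=\p_f$ or cancellation of $f$.
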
 

\begin{proof}
Let $\psi: M \to M'$ be a morphism in ${\tt P}(\bar{A}^!, f^!)$. We suppose that there exists some free module $F \in A\text{-}{\tt grmod}$ such that ${\mc F}(\psi): {\mc F}(M) \to {\mc F}(M')$ factors through the object $\phi^F \in {\tt NMF}_{\Z}^A(f)$. Then by definition we have the following commutative diagram for all $i \in \Z$, $i \equiv 0(2)$,
\[
\xymatrix{ 
 A(i-1) \tsr M_{i-1} \ar[d]_-{\pi_{i-1}} \ar[r]^-{\p_b} & A(i) \tsr M_i \ar[d]^-{\pi_{i+1} \p_b} \ar[r]^-{\p_b} & A(i+1) \tsr M_{i+1} \ar[d]^-{\pi_{i+1}}  \\
 F(i-2) \ar[d]_-{\iota_{i-1}} \ar[r]^-{\p_f} & F(i) \ar[d]^-{\iota_i} \ar[r]^-{\id} & F(i) \ar[d]^-{\p_b \iota_i}  \\
 A(i-1) \tsr M_{i-1}' \ar[r]^-{\p_b} & A(i) \tsr M_i' \ar[r]^-{\p_b} & A(i+1) \tsr M_{i+1}'
}
\]
where all maps are degree $0$ graded left $A$-module homomorphisms. The module $A(i+1) \tsr M_{i+1}$ is generated in degree $-(i+1)$ and the module $A(i) \tsr M_i'$ is zero in degree $-(i+1)$, hence the map $\iota_i \, \pi_{i+1}: A(i+1) \tsr M_{i+1} \to A(i) \tsr M_i'$ is the zero map. Therefore the map $\p_b \, \iota_i \, \pi_{i+1}: A(i+1) \tsr M_{i+1} \to A(i+1) \tsr M_{i+1}'$ is also the zero map. Note that by assumption, we have $\id \tsr \psi_{i+1} = p_b \, \iota_i \, \pi_{i+1}$, hence $\id \tsr \psi_{i+1} = 0$. Consider the commutative diagram
\[
\xymatrix{ A(i) \tsr M_i \ar[d]_-{\id \tsr \psi_i} \ar[r]^-{\p_b} & A(i+1) \tsr M_{i+1} \ar[d]^-{0} \\
 A(i) \tsr M_i' \ar[r]^-{\p_b} & A(i+1) \tsr M_{i+1}'.}
\]
The proof of Theorem \ref{F functor valued in NMF} shows that $\p_b$ is injective, hence $\id \tsr \psi_i = 0$. We conclude that $1 \tsr \psi = 0$, and then $\psi = 0$ follows from the fact that the functor $A \tsr_{\k} \underline{\ \ } $ is exact.

Similarly, one shows that if ${\mc F}(\psi)$ factors through the object $^G \phi \in {\tt NMF}_{\Z}^A(f)$ for some free module $G \in A\text{-}{\tt grmod}$, then $\psi = 0$. Finally, $\phi^F \oplus {^G \phi}$ for free modules $F, G \in A\text{-}{\tt grmod}$ is both a coproduct and a product in the additive category ${\tt NMF}_{\Z}^A(f)$, so it follows that if ${\mc F}(\psi)$ factors through an object in ${\tt H}$, then $\psi = 0$. We conclude that ${\mc F}: {\tt P}(\bar{A}^!, f^!) \to {\underline {\tt NMF}}_{\Z}^A(f)$ is faithful. 
\end{proof}

As a consequence of Proposition \ref{F functor valued in stable NMF} we have the next result. The proof is omitted since it follows immediately from the fact that ${\mc F}: {\tt P}(\bar{A}^!, f^!) \to {\underline {\tt NMF}}_{\Z}^A(f)$ is faithful, and ${\tt P}(\bar{A}^!, f^!)$, ${\underline {\tt NMF}}_{\Z}^A(f)$ are both additive $\k$-linear categories. 

\begin{corollary}\label{stable F is nontrivial}
Let $(A, f ) \in {\tt Quad\text{-}QHS}$ and assume that $A$ satisfies the left strong rank condition, and that the map $\p_{f} \colon A \to A$ is injective. Consider the functor ${\mc F}: {\tt P}(\bar{A}^!, f^!) \to {\underline {\tt NMF}}_{\Z}^A(f)$. For $M \in {\tt P}(\bar{A}^!, f^!)$, we have ${\mc F}(M) = 0$ in ${\underline {\tt NMF}}_{\Z}^A(f)$ if and only if $M = 0$ in ${\tt P}(\bar{A}^!, f^!)$.
\end{corollary}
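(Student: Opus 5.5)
The plan is to deduce this directly from Proposition \ref{F functor valued in stable NMF} by the standard argument that a faithful additive functor between additive categories detects zero objects. One direction is trivial. If $M = 0$ in ${\tt P}(\bar{A}^!, f^!)$, then ${\mc T}(M)_i = A(i) \tsr M_i = 0$ for all $i$, so ${\mc F}(M)$ is the zero sequence and is zero in ${\underline {\tt NMF}}_{\Z}^A(f)$. Equivalently, ${\mc F}$ is $\k$-linear and hence additive, so it preserves zero objects.

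For the converse, suppose ${\mc F}(M) \cong 0$ in ${\underline {\tt NMF}}_{\Z}^A(f)$. An object $X$ of a $\k$-linear (preadditive) category is a zero object exactly when $\id_X = 0$ in $\Hom(X,X)$. So $\id_{{\mc F}(M)} = 0$ in $\Hom_{{\underline {\tt NMF}}_{\Z}^A(f)}({\mc F}(M), {\mc F}(M))$. Since ${\mc F}$ is a functor, ${\mc F}(\id_M) = \id_{{\mc F}(M)} = 0 = {\mc F}(0)$. Here I use that ${\mc F}$ sends the zero morphism to the zero morphism, which is clear because ${\mc F}(\psi) = \{\id \tsr \psi_i\}$ is linear in $\psi$ and the quotient functor is additive. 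Faithfulness (Proposition \ref{F functor valued in stable NMF}) then gives $\id_M = 0$, so $M = 0$.

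The only point needing care is that the quotient ${\underline {\tt NMF}}_{\Z}^A(f) = {\tt NMF}_{\Z}^A(f)/{\tt H}$ is still a $\k$-linear category. This requires ${\tt H}(\phi,\phi')$ to be a subspace closed under composition on either side. Closure under composition is immediate from the definition (``factoring through an object of ${\tt H}$''). Closure under sums holds because ${\tt H}$ is closed under finite direct sums: $\phi^F \oplus {}^G\phi \oplus \phi^{F'} \oplus {}^{G'}\phi \cong \phi^{F\oplus F'} \oplus {}^{G \oplus G'}\phi$. Given that, "zero object" in the quotient means $\id$ factors through ${\tt H}$, and the argument above applies verbatim. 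I do not anticipate any real obstacle; this closure check is the only step that is not purely formal.
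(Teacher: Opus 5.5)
Your proposal is correct and is the same argument the paper has in mind: the paper omits the proof, citing only the faithfulness of ${\mc F}$ (Proposition \ref{F functor valued in stable NMF}) and the additive $\k$-linear structure of both categories, and your deduction $\mc{F}(\id_M) = \id_{\mc{F}(M)} = 0 \Rightarrow \id_M = 0$ is exactly that standard step. Your check that each ${\tt H}(\phi,\phi')$ is a subspace closed under composition, so that the quotient is genuinely $\k$-linear, supplies a detail the paper takes for granted.
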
 

Theorem \ref{F functor valued in NMF}, Proposition \ref{F functor valued in stable NMF} and Corollary \ref{stable F is nontrivial} show that the regularity of the elements $f \in A$ and $f^! \in \bar{A}^!$ is related to constructing noncommutative graded left matrix factorizations of $f$ from objects of ${\tt GrMod}\text{-}\bar{A}^!$. In the following section we discuss this relationship in more detail.

\section{Noncommutative matrix factorizations from finite-dimensional representations}

In this section we consider noncommutative Koszul quadric hypersurfaces and prove three main results. Associated to any $(A, f) \in {\tt Kos}\text{-}{\tt QHS}$, there are two important algebras ${\mathcal Cl}(A, f)$ and $\mc{C}(A, f)$, referred to, respectively, as the \emph{Clifford algebra} and the \emph{even Clifford algebra} of the pair $(A,f)$. First, we prove that the construction of the Clifford algebra yields a contravariant functor ${\mathcal Cl}: \tt{Kos\text{-}QHS} \to \tt{GrAlg}$.  Second, we use the results of Section 4 to prove that for certain pairs $(A,f)$, there is a faithful functor ${\mc F}: {\tt fdim}\text{-}\mc{C}(A, f) \to {\underline{\tt NMF}}_{\Z}^A(f)$. Finally, we prove that when $A$ is a noetherian algebra of finite global dimension, then the even Clifford algebra $\mc{C}(A, f)$ is a PBW-deformation of a Zhang twist of the $2$-Veronese subalgebra of the quadratic dual algebra $A^!$. 

\subsection{The Clifford algebra and even Clifford algebra associated to $(A, f)$}

\begin{definition}\label{C(A,f) definition}
Let $(A, f) \in \tt{Kos\text{-}QHS}$. By Proposition \ref{normal, regular, Koszul case}, the element $f^! \in \bar{A}^!$ is normal and regular, so the set $\{(f^!)^n : n \in \N\}$ is a right Ore set in $\bar{A}^!$. Hence, we define the algebra \[{\mathcal Cl}(A, f) = \bar{A}^![(f^!)^{-1}]\] to be the associated right ring of fractions. 
\end{definition}

When the algebra $A$ is commutative, the work \cite{BEH}, seminal for the study of matrix factorizations on commutative quadrics, reveals important connections between ${\mathcal Cl}(A, f)$ and classical Clifford algebras of quadratic forms. For this reason, we will call ${\mc Cl}(A, f)$ the \emph{Clifford algebra} of the pair $(A, f)$. 

The algebra ${\mathcal Cl}(A, f)$ is naturally $\Z$-graded. For any $i \in \Z$, the  $i$th graded component of ${\mc Cl}(A, f)$ is given by \[{\mathcal Cl}(A, f)_i = \big\{x(f^!)^{-k} : k \in \Z, \ x \in \bar{A}^!_{i+2k}\big\}.\] 
In fact, by \cite[Lemma 4.4]{Mori-Ueyama-2} the $\Z$-graded algebra ${\mc Cl}(A, f)$ is strongly graded. 

Furthermore, if $(A, f) \to (B, g)$ is a morphism in $\tt{Kos\text{-}QHS}$, then the morphism $(\bar{B}^!, g^!) \to (\bar{A}^!, f^!)$ induces a canonical morphism ${\mathcal Cl}(B, g) \to {\mathcal Cl}(A, f)$ in $\tt{GrAlg}$. It is then straightforward to prove the following result.

\begin{proposition}\label{Clifford functor}
Using the construction above, we have defined a contravariant functor ${\mathcal Cl}: \tt{Kos\text{-}QHS} \to \tt{GrAlg}$. 
\end{proposition}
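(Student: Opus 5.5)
The plan is to build ${\mathcal Cl}$ on morphisms from the universal property of Ore localization, and then deduce functoriality from uniqueness. Let $\varphi: (A, f) \to (B, g)$ be a morphism in $\tt{Kos\text{-}QHS}$. By Construction \ref{duality functor on NC-Quad-QHypSurf} it induces a graded algebra morphism $\bar{\varphi}^!: \bar{B}^! \to \bar{A}^!$ with $\bar{\varphi}^!(g^!) = f^!$.

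Compose $\bar{\varphi}^!$ with the canonical localization map $\iota_A: \bar{A}^! \to {\mathcal Cl}(A, f) = \bar{A}^![(f^!)^{-1}]$. Under this composite $g^!$ goes to $f^!$, which is a unit in ${\mathcal Cl}(A, f)$. Hence every power $(g^!)^n$ also maps to a unit.

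Proposition \ref{normal, regular, Koszul case} says $g^!$ is normal and regular, so $\{(g^!)^n\}$ is a right (in fact two-sided) Ore set. The universal property of the right ring of fractions then gives a unique algebra morphism
\[{\mathcal Cl}(\varphi): {\mathcal Cl}(B, g) \to {\mathcal Cl}(A, f), \qquad x (g^!)^{-k} \mapsto \bar{\varphi}^!(x)(f^!)^{-k},\]
extending $\iota_A \circ \bar{\varphi}^!$. It preserves the $\Z$-grading, because $\bar{\varphi}^!$ preserves degree: an element $x \in \bar{B}^!_{i+2k}$ goes to $\bar{\varphi}^!(x) \in \bar{A}^!_{i+2k}$. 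So $x(g^!)^{-k}$ of degree $i$ maps to an element of degree $i$ in the grading described after Definition \ref{C(A,f) definition}. Thus ${\mathcal Cl}(\varphi)$ is a morphism in $\tt{GrAlg}$. One should also record that ${\mathcal Cl}(A, f)$ is locally finite-dimensional. This holds because each ${\mathcal Cl}(A,f)_i$ is a directed union of the images of the spaces $\bar{A}^!_{i+2k}$, and these images stabilize once right multiplication by $f^!$ is an isomorphism on high degrees. Alternatively one can read $\tt{GrAlg}$ loosely here, as the paper implicitly does.

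Functoriality follows from uniqueness in the universal property. The identity on $(A,f)$ induces the identity on $\bar{A}^!$ by Construction \ref{duality functor on NC-Quad-QHypSurf}. Its extension is then forced to be $\id_{{\mathcal Cl}(A,f)}$. Now take $\varphi:(A,f)\to(B,g)$ and $\chi:(B,g)\to(C,h)$. We have $\overline{\chi\varphi}^! = \bar{\varphi}^! \circ \bar{\chi}^!$ because $(\ )^!$ is a contravariant functor. Both ${\mathcal Cl}(\chi\varphi)$ and ${\mathcal Cl}(\varphi)\circ{\mathcal Cl}(\chi)$ extend $\iota_A \circ \bar{\varphi}^! \circ \bar{\chi}^!$, so they coincide. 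This shows that ${\mathcal Cl}$ is a contravariant functor.

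The only step needing care is making the universal property available. First, we need the Ore condition for $\{(g^!)^n\}$, which comes from normality and regularity via Proposition \ref{normal, regular, Koszul case}. Second, we need to confirm that a ring morphism into a ring where the images of $(g^!)^n$ are invertible factors uniquely through the right ring of fractions. This is the standard universal property of Ore localization, so I expect no genuine obstacle beyond bookkeeping with the grading.
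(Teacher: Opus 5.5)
Your main argument is correct, and it is the argument the paper's one-line justification leaves implicit. You extend $\iota_A \circ \bar{\varphi}^!$ through the Ore localization, using that $g^!$ is normal and regular and that $\bar{\varphi}^!(g^!) = f^!$ is a unit. You then check that degrees are preserved, and you get identities and composites from uniqueness in the universal property.

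The one step that fails is your claim that ${\mathcal Cl}(A,f)$ is always locally finite-dimensional.
\begin{itemize}
\item Since $f^!$ is normal and regular, $\rho_{f^!}\colon \bar{A}^!_m \to \bar{A}^!_{m+2}$ is injective with cokernel $(\bar{A}^!/\la f^! \ra)_{m+2} \cong A^!_{m+2}$.
\item So the images of the $\bar{A}^!_{i+2k}$ stabilize only when $A^!$ is finite-dimensional, i.e.\ only when $A$ has finite global dimension. That is not part of the definition of ${\tt Kos\text{-}QHS}$.
\item For example, take $A = \k[x,y]/(x^2)$, which is Koszul and even noetherian, and $f = y^2$, which is central and regular. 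Then $\bar{A}^! = \k\la \xi, \eta \ra/(\xi\eta + \eta\xi)$ and $f^! = \eta^2$.
\item Hence ${\mathcal Cl}(A,f)$ is the skew Laurent ring $\k\la \xi, \eta^{\pm 1}\ra/(\xi\eta+\eta\xi)$. Its degree-zero part ${\mathcal C}(A,f)$ contains the linearly independent elements $\xi^a \eta^{a} (f^!)^{-a} = \xi^a\eta^{-a}$, $a \geq 0$, so it is infinite-dimensional.
\end{itemize}
Thus, with ${\tt GrAlg}$ defined as in Subsection \ref{algebras} (locally finite-dimensional), the statement itself fails, and the paper's ``straightforward'' proof silently ignores this.

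You should delete the stabilization argument and say explicitly which reading you use. Either let ${\tt GrAlg}$ mean all $\Z$-graded algebras (your fallback), or restrict ${\mathcal Cl}$ to the full subcategory of pairs with $A$ noetherian of finite global dimension, where Proposition \ref{Cl(A,f) finite-dimensional} gives local finiteness. With either fix, the rest of your proof goes through unchanged.
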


The degree $0$ subalgebra of ${\mathcal Cl}(A, f)$ will be very important in the sequel. 

\begin{definition}\label{Cl(A,f) definition}
Let $\mc{C}(A,f) = {\mc Cl}(A, f)_0$ be the degree $0$ subalgebra of ${\mathcal Cl}(A, f)$. We note that $\mc{C}(A,f)$ is often written as $C(A/\la f \ra)$ by other authors, \cite{Mori-Ueyama-1, Mori-Ueyama-2, Smith-vdB}. Due to the functorial emphasis of this paper, we use the notation $\mc{C}(A,f)$. We will refer to ${\mc C}(A, f)$ as the \emph{even Clifford algebra} of the pair $(A, f)$.
\end{definition}

The following result gives conditions ensuring that the graded pieces of ${\mc Cl}(A,f)$ are finite-dimensional over $\k$, as well as determining their dimensions. The proof closely follows that of \cite[Lemma 5.1(3)]{Smith-vdB}, but because we need a little more generality we include it. 

\begin{proposition}[{\cite[Lemma 5.1(3)]{Smith-vdB}, \cite[Lemma 4.7]{Mori-Ueyama-2}}]\label{Cl(A,f) finite-dimensional}

Let $(A, f) \in \tt{Kos\text{-}QHS}$ with $A$ a noetherian Koszul algebra of finite global dimension.  Then for all $i \in \Z$ the following statements hold:
\begin{itemize}
\item[(1)] ${\mc Cl}(A, f)_i$ is finite-dimensional over $\k$,
\item[(2)]  $\dim_{\k} {\mc C}(A, f) = \dim_{\k} A^!_{(2)}$,
\item[(3)]  $\dim_{\k} {\mc Cl}(A, f)_1 = \dim_{\k} \bigoplus_{n \geq 0} A^!_{2n+1}$, and
\item[(4)] $\p_{f^{!}}\colon {\mc Cl}(A,f)_i \to {\mc Cl}(A, f)_{i+2}$ is a vector space isomorphism.
\end{itemize}
\end{proposition}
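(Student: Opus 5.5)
The plan is to compare Hilbert series. Since $f$ is normal and regular in $A$ and $A$ is Koszul, $\bar{A} = A/\la f\ra$ is Koszul (\cite[Theorem 1.2]{Shelton-Tingey}), so $\bar{A}^!$ is Koszul and $h_{\bar{A}^!}(t)\,h_{\bar{A}}(-t)=1$. Also $h_{\bar{A}}(t) = (1-t^2)h_A(t)$ and $h_A(t)h_{A^!}(-t)=1$, which give
\[h_{\bar{A}^!}(t) = \frac{h_{A^!}(t)}{1-t^2}.\]
By Proposition \ref{normal, regular, Koszul case}, $f^!$ is normal and regular in $\bar{A}^!$, so right multiplication $\p_{f^!}\colon \bar{A}^!_n \to \bar{A}^!_{n+2}$ is injective. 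The Hilbert series identity then says $\dim \bar{A}^!_{n+2} - \dim \bar{A}^!_n = \dim A^!_{n+2}$, which is consistent with $\bar{A}^!/\la f^!\ra \cong A^!$. Since $A$ is noetherian of finite global dimension and Koszul, $A^!$ is finite-dimensional: $A^!_n \cong \Ext^n_A(\k,\k)_{-n}$ vanishes for $n > \gldim A$. Hence there is $N$ such that $\p_{f^!}\colon \bar{A}^!_n \to \bar{A}^!_{n+2}$ is an isomorphism for all $n \geq N$, and $\dim \bar{A}^!_{2m} = \dim A^!_{(2)}$ and $\dim \bar{A}^!_{2m+1} = \sum_{n\ge0}\dim A^!_{2n+1}$ for $m \gg 0$, obtained by summing the telescoping differences.

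Next, describe ${\mc Cl}(A,f)_i$ as a direct limit. Using the description ${\mc Cl}(A,f)_i = \{x(f^!)^{-k} : x \in \bar{A}^!_{i+2k}\}$ and regularity of $f^!$, each map $\bar{A}^!_{i+2k} \to {\mc Cl}(A,f)_i$, $x \mapsto x(f^!)^{-k}$, is injective. Then ${\mc Cl}(A,f)_i = \varinjlim_k \bar{A}^!_{i+2k}$ along the injective maps $\p_{f^!}$, because $x(f^!)^{-k} = x f^!(f^!)^{-k-1}$. Since these maps are isomorphisms for large $k$, the limit stabilizes. This gives (1), and taking $i=0$ and $i=1$ gives (2) and (3).

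For (4), note that right multiplication by $f^!$ on ${\mc Cl}(A,f)$ is bijective because $f^!$ is a unit there. It is homogeneous of degree $2$, so it restricts to an isomorphism ${\mc Cl}(A,f)_i \to {\mc Cl}(A,f)_{i+2}$.

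I expect the main obstacle to be the careful justification of the direct limit description. This means checking that every element of the Ore localization has the form $x(f^!)^{-k}$ with homogeneous $x$, and that two such representatives are equal exactly when they agree after multiplying by a power of $f^!$. For the latter, I will use normality, $(f^!)^k x = \varphi^k(x)(f^!)^k$, together with regularity, to reduce equality in the localization to equality in $\bar{A}^!$.
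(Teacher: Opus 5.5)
Your proposal is correct and follows essentially the same route as the paper. Both arguments use the Hilbert series relation $(1-t^2)H_{\bar{A}^!}(t)=H_{A^!}(t)$, finite-dimensionality of $A^!$, and regularity of $f^!$ to see that $\rho_{f^!}\colon \bar{A}^!_n\to\bar{A}^!_{n+2}$ is eventually an isomorphism, so that ${\mc Cl}(A,f)_i=\bar{A}^!_{i+2k}(f^!)^{-k}$ for $k\gg 0$, and (4) comes from invertibility of $f^!$. The differences are cosmetic: you derive the Hilbert series identity from Koszul duality numerics for $A$ and $\bar{A}$, whereas the paper reads it off from regularity of $f^!$ and $\bar{A}^!/\la f^!\ra\cong A^!$, and you obtain (1) directly from the stabilization for every $i$ rather than deducing it from (2)--(4).
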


\begin{proof}
Statement (4) is obvious since the inverse of $\p_{f^!}$ is $\p_{(f^!)^{-1}}$. Now we consider (2) and (3). Since $A$ is Koszul of finite global dimension the algebra $A^!$ is finite-dimensional over $\k$. Recall that $A^! \cong \bar{A}^!/\la f^! \ra$. Since $f^! \in \bar{A}^!_2$ is regular we have an equality in terms of Hilbert series \[H_{A^!}(t) = (1-t^2)H_{\bar{A}^!}(t).\] Since $A^!$ is finite-dimensional it follows that $\bar{A}_{m+2}^! = \bar{A}^!_m f^!$ for all  $m$ sufficiently large. Note that for all $i \geq 0$ it is clear that $\bar{A}^!_i \subseteq \bar{A}^!_{i+2} (f^!)^{-1}$ so we have \[{\mc Cl}(A, f)_1 = \bar{A}^!_1 + \bar{A}^!_3 (f^!)^{-1} + \bar{A}^!_5 (f^!)^{-2} + \cdots = \bar{A}^!_{2m+1}(f^!)^{-m}\] for all $m$ sufficiently large. 
Hence, for all $m$ sufficiently large, we have \[\dim_{\k} {\mc Cl}(A, f)_1 = \dim_{\k} \bar{A}^!_{2m+1} = \dim_{\k} \bigoplus_{n \geq 0} A^!_{2n+1},\] where the second equality follows from the Hilbert series equation. The proof of statement (2) for ${\mc Cl}(A, f)_0 = {\mc C}(A, f)$ is similar. 

Finally, statement (1) is immediate from (2), (3) and (4).
\end{proof} 

\subsection{The functor ${\mc F}:{\tt fdim}\text{-} \mc{C}(A, f) \to \underline{{\tt NMF}}_{\Z}^A(f)$} 

\begin{construction}\label{module to objects}
Assume that $A$ is a noetherian Koszul algebra of finite global dimension. Since $A$ is left noetherian, we note that $A$ satisfies the left strong rank condition, see \cite[Theorem 1.35]{Lam}. We also assume that $(A, f) \in {\tt Kos}\text{-}{\tt QHS}$, so by definition, $f \in A_2$ is normal and regular. 

Let $N \in {\tt{fdim}}\text{-}\mc{C}(A, f)$. We define a $\Z$-graded vector space by \[\widehat{N} = \bigoplus_{i \in \Z} \widehat{N}_i = \bigoplus_{i \in \Z} N \tsr_{\mc{C}(A, f)} {\mc Cl}(A, f)_i.\]
We consider $\widehat{N}$ to be an object of ${\tt GrMod}\text{-}\bar{A}^!$ via the natural inclusion $\bar{A}^! \to {\mc Cl}(A, f)$ of $\Z$-graded algebras. 

\begin{proposition}\label{hat N to factorization}
Let $\widehat{N} \in {\tt GrMod}\text{-}\bar{A}^!$ be as in the previous paragraph. Consider the functor $\mc{F}: {\tt P}(\bar{A}^!, f^!) \to {\tt NMF}_{\Z}^A(f)$ from Theorem \ref{F functor valued in NMF}. Then $\widehat{N} \in {\tt P}(\bar{A}^!, f^!)$, so $\mc{F}(\widehat{N}) \in {\tt NMF}_{\Z}^A(f)$. 
\end{proposition}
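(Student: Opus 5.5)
To show $\widehat{N} \in {\tt P}(\bar{A}^!, f^!)$, we need three facts: $\widehat{N}$ is a graded right $\bar{A}^!$-module, each graded piece $\widehat{N}_i$ is finite-dimensional, and right multiplication by $f^!$ gives an isomorphism $\widehat{N}_i \to \widehat{N}_{i+2}$ for every $i$. The conclusion $\mc{F}(\widehat{N}) \in {\tt NMF}_{\Z}^A(f)$ then follows from Theorem \ref{F functor valued in NMF}. Its hypotheses hold here: $A$ is left noetherian, so it has the left strong rank condition, and $f$ is regular, so $\p_f$ is injective.

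For the module structure, note that $\widehat{N} = \bigoplus_i N \tsr_{\mc{C}(A,f)} {\mc Cl}(A,f)_i$ is exactly $N \tsr_{\mc{C}(A,f)} {\mc Cl}(A,f)$ with its induced $\Z$-grading. It is therefore a graded right ${\mc Cl}(A,f)$-module: the action ${\mc Cl}(A,f)_i \tsr {\mc Cl}(A,f)_j \to {\mc Cl}(A,f)_{i+j}$ is right $\mc{C}(A,f)$-linear in the first factor and left linear in the second, so it descends to the tensor product. Restricting along the graded inclusion $\bar{A}^! \to {\mc Cl}(A,f)$ makes $\widehat{N}$ a graded right $\bar{A}^!$-module, with $\bar{A}^!_j$ sending $\widehat{N}_i$ to $\widehat{N}_{i+j}$.

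For finite-dimensionality, Proposition \ref{Cl(A,f) finite-dimensional}(1),(2) give that ${\mc Cl}(A,f)_i$ and $\mc{C}(A,f)$ are finite-dimensional, and $N$ is finite-dimensional by assumption. Hence $N \tsr_{\mc{C}(A,f)} {\mc Cl}(A,f)_i$ is a quotient of the finite-dimensional space $N \tsr_{\k} {\mc Cl}(A,f)_i$, so it is finite-dimensional. Thus $\widehat{N}$ is locally finite-dimensional.

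The key step is the isomorphism $\p_{f^!}\colon \widehat{N}_i \to \widehat{N}_{i+2}$. Under the identification above, this map is $\id_N \tsr \p_{f^!}$, where $\p_{f^!}\colon {\mc Cl}(A,f)_i \to {\mc Cl}(A,f)_{i+2}$ is right multiplication in ${\mc Cl}(A,f)$. This map is a homomorphism of left $\mc{C}(A,f)$-modules by associativity. By Proposition \ref{Cl(A,f) finite-dimensional}(4) it is bijective, with inverse $\p_{(f^!)^{-1}}$, which is also left $\mc{C}(A,f)$-linear. So $\p_{f^!}$ is an isomorphism of left $\mc{C}(A,f)$-modules, and applying the functor $N \tsr_{\mc{C}(A,f)} -$ gives an isomorphism $\widehat{N}_i \to \widehat{N}_{i+2}$ with inverse $\id_N \tsr \p_{(f^!)^{-1}}$. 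The only point needing care is that $\p_{f^!}$ and its inverse are left $\mc{C}(A,f)$-linear, and this holds simply because they act on the right.
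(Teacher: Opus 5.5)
Your proof is correct and follows the same route as the paper. Local finite-dimensionality comes from Proposition \ref{Cl(A,f) finite-dimensional}(1), and the periodicity condition comes from the invertibility of $\rho_{f^!}\colon {\mathcal Cl}(A,f)_i \to {\mathcal Cl}(A,f)_{i+2}$ in Proposition \ref{Cl(A,f) finite-dimensional}(4); beyond that, you only supply details the paper leaves implicit: that this map and its inverse $\rho_{(f^!)^{-1}}$ are left $\mathcal{C}(A,f)$-linear, so the isomorphism survives $N \otimes_{\mathcal{C}(A,f)} -$ (equivalently, $f^!$ is a unit acting on the right ${\mathcal Cl}(A,f)$-module $\widehat{N}$), and that the hypotheses of Theorem \ref{F functor valued in NMF} hold.
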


\begin{proof}
It follows immediately from Proposition \ref{Cl(A,f) finite-dimensional}(1) that $\widehat{N}$ is locally finite-dimensional. Furthermore, since $f^! \in {\mc Cl}(A, f)$ is regular, the map $\p_{f^!}: \widehat{N}_i \to \widehat{N}_{i+2}$ is a vector space isomorphism by Proposition \ref{Cl(A,f) finite-dimensional}(4). Hence, referring to Definition \ref{category I}, we conclude that $\widehat{N}$ is in the category ${\tt P}(\bar{A}^!, f^!)$. 
\end{proof}

Now suppose that $\psi: M \to N$ is a morphism in ${\tt fdim}\text{-}{\mc C}(A, f)$. We define the map $\widehat{\psi}: \widehat{M} \to \widehat{N}$ by declaring that its $i$th component is given by \[\widehat{\psi}_i = \psi \tsr \id: M \tsr_{{\mc C}(A, f)} {\mc Cl}(A,f )_i \to N \tsr_{{\mc C}(A, f)}{\mc Cl}(A,f )_i.\] We note that since the Clifford algebra ${\mc Cl}(A, f)$ is strongly graded the correspondence $N \to \widehat{N}$ yields an equivalence of categories ${\tt Mod}\text{-}{\mc C}(A, f) \to {\tt GrMod}\text{-}{\mc Cl}(A, f)$, see \cite[Theorem I.3.4]{NOy}. 

 It is clear that $\widehat{\psi}: \widehat{M} \to \widehat{N}$ is a morphism in ${\tt GrMod}\text{-} \bar{A}^!$ upon restriction. We may then apply the functor $\mc{F}: {\tt P}(\bar{A}^!, f^!) \to {\tt NMF}_{\Z}^A(f)$ to $\widehat{\psi}$ to obtain the morphism ${\mc F}(\widehat{\psi}): {\mc F}(\widehat{M}) \to {\mc F}(\widehat{N})$ in ${\tt NMF}_{\Z}^A(f)$.  It is now routine to prove the following result. 
\begin{theorem}\label{NMFs from reps}
Let $A$ be a noetherian Koszul algebra of finite global dimension, and let $(A, f) \in {\tt Kos}\text{-}{\tt QHS}$. The construction above defines a faithful functor \[{\mc F}:{\tt fdim}\text{-} \mc{C}(A, f) \to \underline{{\tt NMF}}_{\Z}^A(f).\]
\end{theorem}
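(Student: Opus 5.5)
The functor will be the composite
\[{\tt fdim}\text{-}\mc{C}(A,f) \xrightarrow{\ \widehat{(\ )}\ } {\tt P}(\bar{A}^!, f^!) \xrightarrow{\ \mc{F}\ } {\tt NMF}_{\Z}^A(f) \to \underline{{\tt NMF}}_{\Z}^A(f),\]
and I would prove faithfulness by showing that each piece is faithful. The hypotheses of Proposition \ref{F functor valued in stable NMF} are in place. By Construction \ref{module to objects}, $A$ satisfies the left strong rank condition because it is left noetherian. Also $\p_f\colon A \to A$ is injective because $f$ is regular, as $(A,f)\in{\tt Kos}\text{-}{\tt QHS}$.

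The first step is to check that $N \mapsto \widehat{N}$, $\psi\mapsto\widehat{\psi}$ is a functor into ${\tt P}(\bar{A}^!, f^!)$. On objects this is Proposition \ref{hat N to factorization}. On morphisms, $\widehat{\psi}_i=\psi\tsr\id$ is ${\mc Cl}(A,f)$-linear, hence $\bar{A}^!$-linear after restriction along $\bar{A}^!\to{\mc Cl}(A,f)$. It is degree preserving, and it visibly respects identities and composition. Since ${\tt P}(\bar{A}^!, f^!)$ is a full subcategory of ${\tt GrMod}\text{-}\bar{A}^!$, $\widehat{\psi}$ is a morphism there.

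The second step is faithfulness of $\widehat{(\ )}$. Suppose $\widehat{\psi}=0$. Then in particular $\widehat{\psi}_0=\psi\tsr\id\colon M\tsr_{\mc C}\mc C\to N\tsr_{\mc C}\mc C$ vanishes. Under the canonical isomorphisms $M\tsr_{\mc C}{\mc Cl}(A,f)_0\cong M$ and $N\tsr_{\mc C}{\mc Cl}(A,f)_0\cong N$, this map is $\psi$, so $\psi=0$. Alternatively, one can invoke the equivalence ${\tt Mod}\text{-}{\mc C}(A,f)\simeq{\tt GrMod}\text{-}{\mc Cl}(A,f)$ from strong gradedness \cite[Theorem I.3.4]{NOy} together with faithfulness of the restriction of scalars. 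The degree-$0$ argument is more direct, though.

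The third step applies Proposition \ref{F functor valued in stable NMF}, which says $\mc{F}\colon{\tt P}(\bar{A}^!, f^!)\to\underline{{\tt NMF}}_{\Z}^A(f)$ is faithful. A composite of faithful functors is faithful, which finishes the proof. I expect no genuine obstacle. The only point needing care is the first step, namely confirming that $\widehat{\psi}$ really lands in morphisms of the full subcategory ${\tt P}(\bar{A}^!, f^!)$ and that the composite with the quotient functor is well defined. Both follow from the cited results.
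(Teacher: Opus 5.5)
Your proposal is correct and follows essentially the paper's argument, which the paper leaves as ``routine'': the functor is the composite of $N\mapsto\widehat{N}$, which lands in ${\tt P}(\bar{A}^!, f^!)$ by Proposition \ref{hat N to factorization}, with the faithful functor of Proposition \ref{F functor valued in stable NMF}, whose hypotheses you correctly verify from noetherianity and regularity of $f$. Your degree-$0$ identification $\widehat{\psi}_0 \cong \psi$ gives faithfulness of $\widehat{(\ )}$ slightly more directly than the paper's appeal to the strong-gradedness equivalence \cite[Theorem I.3.4]{NOy}, but it amounts to the same thing.
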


\end{construction}

In order to make Theorem \ref{NMFs from reps} effective it would be useful to present the even Clifford algebra ${\mc C}(A, f)$ in terms of generators and relations based on the data $(A, f)$. We now give a solution to this problem. 


\subsection{The even Clifford algebra as a PBW-deformation}

When the algebra $A$ has finite global dimension, Proposition \ref{Cl(A,f) finite-dimensional} (2) begs the question whether there is a more precise relationship between the algebras $\mc{C}(A,f)$ and  $A^!_{(2)}$. In the case where $f \in A_2$ is a \emph{central} element, such a relationship was uncovered in \cite[Section 5]{He-Ye} in the context of the notion of \emph{Clifford deformations}. The Clifford deformations in \cite{He-Ye} are special cases of more general notions, namely \emph{nonhomogenenous quadratic algebras of PBW type} in the sense of \cite{Braverman-Gaitsgory}, and \emph{PBW-deformations} in the sense of \cite{Cassidy-Shelton}. 


Following \cite{Braverman-Gaitsgory}, let $V \in {\tt Vect}$ be finite-dimensional and let $P$ be a subspace of $\k \oplus V \oplus (V \tsr V)$. Then the \emph{nonhomogenous quadratic algebra associated to the pair $(V, P)$} is the algebra $Q(V, P) = T(V)/ \la P \ra$. Consider the canonical filtration on the tensor algebra given by $F^i(T(V)) = \oplus_{j \leq i} T(V)_j$. This induces a filtration on the algebra $Q(V, P)$ and we let ${\rm gr \,} Q(V, P)$ denote the associated graded algebra. Consider the canonical projection $\pi_2: T(V) \to V \tsr V$, and let $R = \pi_2(P)$. Then the \emph{quadratic algebra associated to the pair $(V, P)$} is the algebra $Q(V, R) = T(V)/\la R \ra$. It is clear that there is a natural epimorphism $Q(V, R) \to {\rm gr \,} Q(V, P)$; when this map is an isomomorphism, the algebra $Q(V, P)$ is said to be of \emph{PBW type}.
In the language of \cite{Cassidy-Shelton}, one refers to the algebra $Q(V, P)$ as a \emph{deformation} of the algebra $Q(V, R)$, and if the epimorphism $Q(V, R) \to {\rm gr \,} Q(V, P)$ is an isomorphism, then $Q(V, P)$ is a \emph{PBW-deformation} of $Q(V, R)$.

We will now prove that when the algebra $\mc{C}(A,f)$ is finite-dimensional over $\k$, then it is a PBW-deformation of a Zhang twist by an automorphism, $\Phi$, of the $2$-Veronese subalgebra of the algebra $A^!$. This result will be useful in determining the finite-dimensional representations of $\mc{C}(A,f)$, and in combination with Theorem \ref{NMFs from reps} will also help determine nontrivial noncommutative matrix factorizations of $f$. Additionally, it uncovers that the quadratic dual of the algebra $[A^!_{(2)}]^{\Phi}$ is canonically endowed with the structure of a CDG-algebra, \cite[Chapter 4, Section 4]{PP}. We intend to explore these ideas in future work. 

\begin{construction}\label{PBW deformation}
Let $(A, f) \in \tt{Kos\text{-}QHS}$, and assume that $A$ is noetherian of finite global dimension. By Proposition \ref{normal, regular, Koszul case} we know that $f^! \in \bar{A}^!$ is a normal regular element with normalizing automorphism, $\varphi_{f^!}: \bar{A}^! \to \bar{A}^!$. For notational convenience, let $\Phi = \varphi_{f^!}$.  Since $\Phi$ fixes $f^!$, it is clear that $\Phi(\la f^! \ra) = \la f^! \ra$, hence, abusing notation, we have the induced algebra automorphism $\Phi: \bar{A}^!/\la f^! \ra \to \bar{A}^!/\la f^! \ra$. Recalling that $\bar{A}^!/\la f^! \ra$ is canonically isomorphic to $A^!$, we let $\Phi: A^! \to A^!$ be the algebra automorphism corresponding to $\Phi: \bar{A}^!/\la f^! \ra \to \bar{A}^!/\la f^! \ra$ after identifying $\bar{A}^!/\la f^! \ra \cong A^!$. The restriction of $\Phi: A^! \to A^!$ to the $2$-Veronese subalgebra of $A^!$ will be denoted by $\Phi: A^!_{(2)} \to A^!_{(2)}$. 

Since $A^!$ is Koszul, by \cite[Proposition 3.2.2(i)]{PP} we know that $A^!_{(2)}$ is also a Koszul algebra, in particular,  $A^!_{(2)}$ is a quadratic algebra. We choose a presentation for $A^!_{(2)}$ as follows. Let $\{g_i : 1 \leq i \leq m\}$ be a $\k$-linear basis for the vector space $A^!_2$. Then the kernel of the canonical projection $T(A^!_2) \to A^!_{(2)}$ is generated as an ideal by finitely many elements of the form \[r_k = \sum_{1 \leq i, j \leq m} \l_{i, j, k} \, g_i \tsr g_j, \quad \l_{i, j, k} \in \k, \quad 1 \leq k \leq p \quad \text{for some } p \in \N.\] 
Since $\bar{A}^!/\la f^! \ra \cong A^!$, we may consider $\{g_i : 1 \leq i \leq m\} \cup \{f^!\}$ as a $\k$-linear basis of $\bar{A}^!_2$. For any $1 \leq k \leq p$, the equation $\sum_{i, j} \l_{i, j, k} \, g_ig_j = 0$ in $A^!$ can be lifted to $\bar{A}^!$ yielding the equation \[\sum_{i, j} \l_{i, j, k} \, g_ig_j = b_k f^! \quad \text{for some unique } b_k \in \bar{A}^!_2.\] 
The uniqueness of such $b_k$ follows from the regularity of $f^!$. Then we may write \[b_k = \sum_{i = 1}^m \phi_{i, k} g_i + \theta_k f^! \quad \text{for some unique } \phi_{i, k}, \, \theta_k \in \k.\] Therefore in $\bar{A}^!$ we have the equations \[\sum_{i, j} \l_{i, j, k} \, g_i g_j-\sum_i \phi_{i, k} g_i f^! - \theta_k (f^!)^2 = 0 \quad \text{for }  1 \leq k \leq p.\] 
Right multiplying by $(f^!)^{-2}$ in the ring $\mc{C}(A,f)$ we have \[\sum_{i, j} \l_{i, j, k} \, g_i g_j (f^!)^{-2}-\sum_i \phi_{i, k} g_i (f^!)^{-1} - \theta_k = 0 \quad \text{for }1 \leq k \leq p.\]
Finally, noting that $g_j (f^!)^{-1} = (f^!)^{-1} \Phi(g_j)$, in $\mc{C}(A,f)$ we have \[\sum_{i, j} \l_{i, j, k} \, g_i (f^!)^{-1} \Phi(g_j) (f^!)^{-1}-\sum_i \phi_{i, k} g_i (f^!)^{-1} - \theta_k = 0 \quad \text{for } 1 \leq k \leq p.\]

Let $\{G_i: 1 \leq i \leq m\}$ be a set of formal indeterminates and let $W$ denote the $\k$-vector space spanned by $\{G_i: 1 \leq i \leq m\}$. Then $W$ is isomorphic to $A^!_2$ via the map $G_i \mapsto g_i$ and by abuse of notation we let $\Phi: W \to W$ be the $\k$-linear isomorphism corresponding to $\Phi: A^!_2 \to A^!_2$ under this identification $W \cong A^!_2$. For each $1 \leq k \leq p$ define an element of $T(W)$ by \[R_k = \sum_{i, j} \l_{i, j, k} \, G_i \tsr \Phi(G_j) - \sum_i \phi_{i, k} G_i - \theta_k.\] 
Let $P$ be the subspace of $\k \oplus W \oplus (W \tsr W)$ spanned by the set $\{R_k : 1 \leq k \leq p \}$ and define a nonhomogeneous quadratic algebra by \[Q(W, P) = T(W)/\la P \ra.\]

The above analysis shows that we can define an algebra homomorphism via \[\delta: Q(W, P) \to {\mathcal C}(A, f), \quad  \delta(G_i) = g_i (f^!)^{-1} \quad \text{for all } 1 \leq i \leq m.\] 
Furthermore, since the $g_i (f^!)^{-1}$ generate $\mc{C}(A,f)$ as an algebra, it is clear that $\delta$ is surjective. 

In summary, we have associated to any $(A, f) \in \tt{Kos\text{-}QHS}$, with $A$ a noetherian algebra of finite global dimension, a nonhomogeneous quadratic algebra $Q(W, P)$ and a surjective algebra homomorphism $\delta: Q(W, P) \to {\mathcal C}(A, f)$. This completes the construction.
\end{construction}

Now we have the main result of this section.

\begin{theorem}\label{Cl(A, f) as PBW deformation}
Let $(A, f) \in \tt{Kos\text{-}QHS}$, and assume that $A$ is noetherian of finite global dimension. Adopting the notation of Construction \ref{PBW deformation}, the following statements hold.
\begin{enumerate}
\item[(1)] The algebra homomorphism $\delta: Q(W,P)  \to {\mathcal C}(A, f)$ is an isomorphism.
\item[(2)] The algebra $Q(W, P)$ is a PBW-deformation of the algebra $[A^!_{(2)}]^{\Phi}$. 
\end{enumerate}
\end{theorem}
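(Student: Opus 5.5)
The plan is a dimension count. $\delta$ is already known to be surjective, so it suffices to bound $\dim_{\k} Q(W,P)$ from above by $\dim_{\k} A^!_{(2)}$. That bound comes from the filtration, and the same bound then also gives the PBW property. Concretely, put $R = \pi_2(P) \subseteq W \tsr W$, spanned by the elements $\sum_{i,j} \l_{i,j,k}\, G_i \tsr \Phi(G_j)$, and let $Q(W,R) = T(W)/\la R \ra$. I will prove three things.

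\textbf{(a)} $Q(W,R) \cong [A^!_{(2)}]^{\Phi}$. In $[A^!_{(2)}]^{\Phi}$, for $g_i, g_j$ of degree one in the Veronese grading, the twisted product is $g_i \ast g_j = g_i\Phi(g_j)$. So the map $T(W) \to [A^!_{(2)}]^{\Phi}$ with $G_i \mapsto g_i$ sends $G_i \tsr \Phi(G_j)$ to $g_i\Phi^2(g_j)$, which is the wrong expression. I therefore need to fix the twist convention carefully. The cleanest route is the standard fact that a Zhang twist of a quadratic algebra by a graded automorphism $\zeta$ has presentation $T(W)/\la(\id\tsr\zeta^{-1})(R_0)\ra$, where $R_0$ is the untwisted relation space; this follows since $x\ast\zeta^{-1}(y)=xy$ in degree one. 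Our $R$ is $(\id\tsr\Phi)(R_0)$, so $Q(W,R)\cong [A^!_{(2)}]^{\Phi^{-1}}$. Either I will identify this with the stated twist by replacing $\Phi$ by $\Phi^{-1}$ (for the normalizing automorphism convention with $(f^!)^{-1}$ on the left this is the natural one), or, equivalently, I will note that twisting by $\Phi$ or by $\Phi^{-1}$ gives algebras with the same Hilbert series. I will settle the convention by checking the relations directly. In any case $\dim_{\k} Q(W,R)=\dim_{\k}A^!_{(2)}$, because a Zhang twist does not change Hilbert series. In addition $Q(W,R)$ is Koszul, being a twist of the Koszul algebra $A^!_{(2)}$ (\cite[Proposition 3.2.2(i)]{PP}), and it is finite-dimensional since $A^!$ is.

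\textbf{(b)} Counting. The natural epimorphism $Q(W,R)\to \mathrm{gr}\,Q(W,P)$ gives
\[\dim_{\k} Q(W,P)=\dim_{\k}\mathrm{gr}\,Q(W,P)\le \dim_{\k} Q(W,R)=\dim_{\k}A^!_{(2)}.\]
On the other hand, surjectivity of $\delta$ together with Proposition \ref{Cl(A,f) finite-dimensional}(2) gives $\dim_{\k}Q(W,P)\ge \dim_{\k}\mc C(A,f)=\dim_{\k}A^!_{(2)}$. Hence all these dimensions are equal and finite.

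\textbf{(c)} Conclusion. From (b), $\delta$ is a surjection between finite-dimensional spaces of equal dimension, so it is an isomorphism; this is (1). Also from (b), the epimorphism $Q(W,R)\to\mathrm{gr}\,Q(W,P)$ is a surjection between spaces of equal finite dimension, so it is an isomorphism. Thus $Q(W,P)$ is a PBW-deformation of $Q(W,R)\cong[A^!_{(2)}]^{\Phi}$, which is (2).

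The main obstacle is step (a). I must get the twist direction right, i.e.\ whether the relations $G_i\tsr\Phi(G_j)$ present the twist by $\Phi$ or by $\Phi^{-1}$ under the paper's convention $x\ast y=x\zeta^n(y)$. I will also need to confirm that $\Phi$ restricted to $A^!_{(2)}$ is a graded automorphism, so that the twist is defined. The dimension argument itself is insensitive to this choice, so (1) holds regardless, and only the precise identification in (2) depends on it.
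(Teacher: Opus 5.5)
Your proposal is the paper's own argument. Surjectivity of $\delta$ together with Proposition \ref{Cl(A,f) finite-dimensional}(2) gives the lower bound, and the epimorphism $Q(W,R)\to \mathrm{gr}\,Q(W,P)$ together with $\dim_{\k} Q(W,R)=\dim_{\k} A^!_{(2)}$ gives the upper bound; equality then yields both (1) and the PBW property. Your concern in step (a) is justified, and your computation already settles it: since $R=(\id\tsr\Phi)(R_0)$, the paper's stated convention $x\ast y=x\zeta^n(y)$ gives $Q(W,R)\cong[A^!_{(2)}]^{\Phi^{-1}}$, whereas the paper simply asserts the twist is by $\Phi$. This changes only which automorphism is named in (2), not the dimension count.
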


\begin{proof}


First, by Proposition \ref{Cl(A,f) finite-dimensional}, the algebra $\mc{C}(A,f)$ is finite-dimensional and $\dim_{\k} {\mathcal C}(A, f) = \dim_{k} A^!_{(2)}$.   

As in Construction \ref{PBW deformation}, let $R \subset W \tsr W$ be the subspace spanned by the set \[\Big\{\sum_{i, j} \l_{i, j, k} G_i \tsr \Phi(G_j) : 1 \leq k \leq p\Big\}.\]
The quadratic algebra associated to the pair $(W, P)$ is the algebra \[Q(W, R) = T(W)/\la R \ra,\] and this algebra is clearly isomorphic to the Zhang twist algebra $[A^!_{(2)}]^{\Phi}$. 
Therefore we have \[\dim_{\k} Q(W, R) = \dim_{\k} [A^!_{(2)}]^{\Phi} = \dim_{\k} A^!_{(2)} =  \dim_{\k} {\mathcal C}(A, f),\] 
where the second equality is due to the fact that Zhang twists preserve vector space dimension.

As discussed above, there is a natural filtration on the algebra $Q(W, P)$ induced by the usual filtration on the tensor algebra $T(W)$. Let ${\rm gr \,} Q(W, P))$ denote the associated graded algebra of $Q(W, P)$ with respect to this filtration. As usual with filtered algebras, there is a vector space isomorphism ${\rm gr \,}Q(W, P) \cong Q(W, P)$. 

Recalling the natural epimorphism $Q(W, R) \to {\rm gr \,} Q(W, P))$, we have \[\dim_{\k} Q(W, P) \leq \dim_{\k} Q(W, R),\] 
and it follows that $\dim_{\k} Q(W, P) \leq  \dim_{\k} {\mathcal C}(A, f)$. The algebra morphism \[\delta: Q(W, P) \to {\mathcal C}(A, f)\] 
is surjective, so we also have 
$\dim_{\k} Q(W, P) \geq  \dim_{\k} {\mathcal C}(A, f)$. Therefore \[\dim_{\k} Q(W, P) =  \dim_{\k} {\mathcal C}(A, f),\] and it follows that $\delta: Q(W, P)\to {\mathcal C}(A, f)$ is an isomorphism. 

Noting that we have proved that $\dim_{\k} Q(W, R) = \dim_{\k} Q(W, P)$, it follows that the epimorphism $Q(W, R) \to {\rm gr \,} Q(W, P)$ is an isomorphism, and hence, by definition, $Q(W, P)$ is a PBW-deformation of the algebra $Q(W, R) \cong [A^!_{(2)}]^{\Phi}$.

\end{proof}

\section{Examples}

In this final section we give some examples illustrating our main results. We assume that $\k$ is an algebraically closed field of characteristic zero. We will write matrix factorizations of $f \in A$ as sequences of matrices over $A$. Such sequences are deternined by choosing bases as in the proof of Theorem \ref{F functor valued in NMF}, and then maps of graded free left $A$-modules are given by matrices acting on the right on elements of free modules written as row vectors. Note that with this choice, some care must be taken: if $\phi$ and $\psi$ are endomorphisms of a graded free left $A$-module and represented by matrices $\Phi$ and $\Psi$, respectively, then the composite mapping $\phi \, \psi$ is represented by the matrix $\Psi \, \Phi$. 

\begin{example}\label{quantum plane}
We consider the \emph{quantum plane}. Fix $q \in \k$, $q \ne 0$. Let \[A = \k_{q}[x,y] = \frac{\k\la x, y \ra}{\la yx-qxy \ra}.\]
It is well known that $A$ is a Koszul Artin-Schelter regular algebra of global dimension $2$. 
We now consider two examples of quadratic normal regular elements of $A$, namely $f = x^2$ and $f = xy$.

\noindent {\bf Case 1.} Let $f = x^2 \in A$. Then $f$ is normal with normalizing automorphism \[\varphi_{f}(x) = x, \quad \varphi_{f}(y) = q^{-2} y.\]
The algebra $\bar{A}^!$ is presented as \[\bar{A}^! = \frac{\k \la x, y \ra}{\la y^2, yx+q^{-1}xy \ra},\] and $f^! = x^2 \in \bar{A}^!$. So we may write the quadratic dual of  $A$ as 
\[A^! = \frac{\k \la x, y \ra}{\la x^2, y^2, yx+q^{-1}xy \ra}.\] Therefore we have \[A^!_{(2)} = \k[xy] \cong \frac{\k[g]}{(g^2)}.\]

Following Construction \ref{PBW deformation} we need to lift the equation $(xy)^2 = 0$ in $A^!$ to an equation in $\bar{A}^!$, \[xyxy = bf^! = (\phi xy + \theta x^2)x^2\] for some unique $\phi, \theta \in \k$. It is easy to see that we must have $\phi = \theta = 0$. Using Proposition \ref{normal, regular, Koszul case}, or direct computation, we find that the normalizing automorphism of $f^! \in \bar{A}^!$ is given by \[\varphi_{f^!}(x) = x, \quad \varphi_{f^!}(y) = q^2y.\] So we have $\varphi_{f^!}(xy) = q^2xy$ and it follows that the space $P$ from Construction \ref{PBW deformation} is spanned by $G \tsr \varphi_{f^!}(G) = q^2(G \tsr G)$. 
Therefore we conclude by Theorem \ref{Cl(A, f) as PBW deformation} (1) that ${\mc C}(A, x^2) \cong \k[G]/(G^2)$. From this it is clear that ${\mc C}(A, x^2)$ is not semisimple, so by \cite[Theorem 5.5]{Mori-Ueyama-2}, the algebra $\bar{A}$ is not a graded isolated singularity and does not have finite Cohen-Macaulay representation type. However, the algebra ${\mc C}(A, x^2)$ does have finite representation type, with precisely two indecomposable (right) modules, namely the trivial module $\k$ and the regular module ${\mc C}(A, x^2)$. Using Proposition \ref{hat N to factorization} we now determine the matrix factorizations corresponding to these indecomposable modules. 

If $N = \k$ is the trivial right ${\mc C}(A, x^2)$-module, then the graded pieces of the right $\bar{A}^!$-module \[\widehat{N} = \bigoplus_{i \in \Z} \k \tsr_{{\mc C}(A,x^2)} {\mc Cl}(A, x^2)_i\] are all one-dimensional. Following the proof of Theorem \ref{F functor valued in NMF} we find that the left $A$-linear maps $\phi_i^{\widehat{N}}: {\mc T}(\widehat{N})_i \to {\mc T}(\widehat{N})_{i+1}$ are given by right multiplication by $x \in A$.

If $N = {\mc C}(A, x^2)$ is the regular right ${\mc C}(A, x^2)$-module, then the graded pieces of the right $\bar{A}^!$-module \[\widehat{N} = \bigoplus_{i \in \Z} {\mc C}(A, x^2) \tsr_{{\mc C}(A,x^2)} {\mc Cl}(A, x^2)_i\] are all two-dimensional. Following the proof of Theorem \ref{F functor valued in NMF}, the left $A$-linear maps $\phi_i^{\widehat{N}}: {\mc T}(\widehat{N})_i \to {\mc T}(\widehat{N})_{i+1}$ are given by right multiplication by the following $2 \times 2$ matrices over $A$ acting on $1 \times 2$ row vectors of ${\mc T}(\widehat{N})_i$. We have
\[\phi_i^{\widehat{N}} \equiv \begin{bmatrix} x & q^i y \\ 0 & -q x \end{bmatrix} \quad \text{for } i \equiv 0(2), \quad \phi_i^{\widehat{N}} \equiv \begin{bmatrix} x & q^{i-1} y \\ 0 & -q^{-1} x \end{bmatrix} \quad \text{for } i \equiv 1(2). \]
Using these matrices the reader can check that indeed $\phi_{i+1}^{\widehat{N}} \phi_{i}^{\widehat{N}}$ is given by right multiplication by $x^2 \in A$. For example, if $i \equiv 0(2)$, then \[\phi_{i+1}^{\widehat{N}} \phi_i^{\widehat{N}} \equiv \begin{bmatrix} x & q^i y \\ 0 & -q x \end{bmatrix} \begin{bmatrix} x & q^i y \\ 0 & -q^{-1} x \end{bmatrix} = \begin{bmatrix} x^2 & -q^{i-1}(yx-qxy) \\ 0 & x^2 \end{bmatrix} = \begin{bmatrix} x^2 & 0 \\ 0 & x^2 \end{bmatrix}.\]

\noindent {\bf Case 2.} Next we consider $f = xy \in A$. Then $f$ is normal with normalizing automorphism \[\varphi_{f}(x) = qx, \quad \varphi_{f}(y) = q^{-1}y.\]
The algebra $\bar{A}^!$ is presented as \[\bar{A}^! = \frac{\k \la x, y \ra}{\la x^2, y^2 \ra},\] and $f^! = yx + q^{-1}xy \in \bar{A}^!$. So we may write the quadratic dual of  $A$ as 
\[A^! = \frac{\k \la x, y \ra}{\la x^2, y^2, yx+q^{-1}xy \ra}.\] Therefore we have \[A^!_{(2)} = \k[xy] \cong \frac{\k[g]}{(g^2)}.\]

Following Construction \ref{PBW deformation} we need to lift the equation $(xy)^2 = 0$ in $A^!$ to an equation in $\bar{A}^!$, \[xyxy = bf^! = (\phi xy + \theta(yx+q^{-1}xy))(yx+q^{-1}xy)\] for some unique $\phi, \theta \in \k$. It is easy to see that we must have $\phi = q$,  $\theta = 0$. Using Proposition \ref{normal, regular, Koszul case}, or direct computation, we find that the normalizing automorphism of $f^! \in \bar{A}^!$ is given by \[\varphi_{f^!}(x) = q^{-1}x, \quad \varphi_{f^!}(y) = qy.\] So we have $\varphi_{f^!}(xy) = xy$ and it follows that the space $P$ from Construction \ref{PBW deformation} is spanned by $G \tsr G - qG$. 
Therefore, by Theorem \ref{Cl(A, f) as PBW deformation} (1), we have an isomorphism of algebras, ${\mc C}(A, xy) \cong \k[G]/(G^2-G)$. This makes it clear that ${\mc C}(A, xy)$ is semisimple, so, by \cite[Theorem 5.5]{Mori-Ueyama-2}, the algebra $\bar{A}$ is a graded isolated singularity and has finite Cohen-Macaulay representation type. Moreover, ${\mc C}(A, xy)$ has, up to isomorphism, precisely two one-dimensional simple right modules. Therefore, unsurprisingly, we find two rank 1 noncommutative matrix factorizations of $xy \in A$. 

\end{example}

\begin{example}\label{Sklyanin (1,1,c)}
We consider some three-dimensional \emph{Sklyanin algebras}. Fix $c \in \k$, $c \ne 0$, $c^3 \ne 1$. Let \[A = \frac{\k\la x, y, z \ra}{\la yz+zy+cx^2, zx+xz+cy^2, xy+yx+cz^2 \ra}.\]
This family of algebras was studied in \cite[Section 6]{CG1}, a good reference for some of the unverified details below. It is well known that $A$ is a Koszul Artin-Schelter regular algebra of global dimension $3$. When $c^3 \ne -8$, the point scheme of $A$ is an elliptic curve in $\P^2$. If $c^3 = -8$, then the point scheme of $A$ is the union of three distinct lines in $\P^2$. We treat the cases $c \ne -2$ and $c = -2$ separately in what follows.

Let $f = x^2+y^2+z^2 \in A$. Then it is easy to check that $f$ is a central element. The algebra $\bar{A}^!$ is presented as \[\bar{A}^! = \frac{\k \la x, y, z \ra}{\la [x, y], [x,z], [y,z], (y-x)(cz+y+x), (y-z)(cx+y+z) \ra},\]  where $[r, s] = rs-sr$, and $f^! = -cxz + y^2 \in \bar{A}^!$. 

A \emph{point module} for $\bar{A}^!$ is a $\Z$-graded right $\bar{A}^!$-module $M = \bigoplus_{i \in \Z} M_i$ such that $\dim_{\k} M_i = 1$ for all $i \in \Z$. Up to isomorphism, such a module is determined by the action of $x, y, z \in \bar{A}^!_1$ on $M_i$ for all $i \in \Z$. We describe this action by giving a point $[\l: \mu: \nu] \in \P^2$, and specify that $x, y, z$ act on a fixed basis of $M_i$ by the scalars $\l, \mu, \nu \in \k$, respectively.

\noindent {\bf Case 1.} Assume that $c \ne -2$. Then the commutative algebra $\bar{A}^!$ has precisely four point modules. They are represented by the following distinct points of $\P^2$, \[[1:1:1], \quad [-(1+c): 1: 1], \quad [1: -(1+c): 1], \quad [1:1:-(1+c)].\] Let $M_1, M_2, M_3, M_4$ be the corresponding point modules. We note that the module $M_i$ is an object of the category ${\tt P}(\bar{A}^!, f^!)$. Following the proof of Theorem \ref{F functor valued in NMF}, we find four rank $1$ factorizations of $f = x^2+y^2+z^2$ over the algebra $A$. These factorizations are
\begin{align*}
&f = (x+y+z)((-c+1)^{-1}(x+y+z)) \\
&f = (-(1+c)x+y+z)((c^2+c+1)^{-1}(-(1+c)x+y+z)) \\
&f = (x-(1+c)y+z)((c^2+c+1)^{-1}(x-(1+c)y+z)) \\
&f = (x+y-(1+c)z)((c^2+c+1)^{-1}(x+y-(1+c)z)). \\
\end{align*}
\noindent {\bf Case 2.} Now assume that $c = -2$. In this case, $\bar{A}^!$ has only one point module $M$, represented by $[1:1:1] \in \P^2$. It follows that the four-dimensional commutative algebra ${\mc C}(A, f)$ has only one simple module, so ${\mc C}(A, f)$ is not semisimple. As an application of Theorem \ref{Cl(A, f) as PBW deformation} we will now outline a proof that ${\mc C}(A, f)$ is isomorphic to the algebra $\k[x, y]/(x^2, y^2)$. Following Construction \ref{PBW deformation} and using the notation there, we find that ${\mc C}(A, f)$ is the quotient of the commutative polynomial ring $\k[g_1, g_2, g_3]$ by the ideal $I$ generated by
\begin{align*}
&9g_1g_2 -g_1-g_2+2g_3-1  \\
&9g_1g_3 -g_1+2g_2-g_3-1  \\
&9g_2g_3 +2g_1-g_2-g_3-1  \\
&9g_1^2 -10g_1-4g_2-4g_3+5  \\
&9g_2^2 -4g_1-10g_2-4g_3+5  \\
&9g_3^2 -4g_1-4g_2-10g_3+5.  \\
\end{align*}

Identify ${\mc C}(A, f)$ with the algebra $\k[g_1, g_2, g_3]/I$. Fix a primitive cube root of unity, $\w \in \k$, that is, $\w^3 = 1$ and $\w \ne 1$. Define elements of ${\mc C}(A, f)$ by \[x = \w^2g_1+\w g_2 + g_3, \qquad y = \w g_1+\w^2g_2 + g_3.\] Then one checks that ${\mc C}(A, f)$ is the $\k$-linear span of the linearly independent set $\{1, x, y, xy\}$, and that $x^2 = y^2 = 0$. It follows that ${\mc C}(A, f) \cong \k[x,y]/(x^2, y^2)$. It is well known that this algebra has infinite representation type. Morover, one can check that for all $n \in \N$ and all $\l \in \k$, there is a family $M(n, \l)$ of pairwise non-isomorphic indecomposable modules given by declaring that $x$, $y$ act respectively by \[\begin{bmatrix} 0 & I_n \\ 0 & 0 \end{bmatrix}, \qquad \begin{bmatrix} 0 & J_n(\l) \\ 0 & 0 \end{bmatrix},\] where $I_n$ is the $n \times n$ identity matrix and $J_n(\l)$ is the $n \times n$ Jordan block matrix with $\l$ on the diagonal. We leave it to the interested reader to determine the matrix factorizations of $f$ corresponding to these indecomposables via the functor ${\mc F}$ in Theorem \ref{NMFs from reps}.

\end{example}

\begin{example}\label{S from GKMV}

We consider a four-dimensional Artin-Schelter regular algebra recently studied in detail in \cite{GKMV}. Let \[A = \frac{\k \la x_1, x_2, x_3, x_4 \ra}{\la x_1x_2 - x_3^2, x_2x_1-x_4^2, x_1x_3-x_2x_4, x_4x_1-x_3x_2, x_2x_3-x_3x_1, x_4x_2-x_1x_4 \ra}.\]
As proved in \cite{GKMV}, the algebra $A$ is a Koszul Artin-Schelter regular algebra of global dimension $4$. Furthermore, there are, up to scale, precisely four normal quadratic elements $f \in A$ with distinct normalizing automorphisms, namely 
\[x_1x_4+x_3x_1, \quad x_1x_4-x_3x_1, \quad x_1^2-x_2^2-x_3x_4+x_4x_3, \quad  x_1^2+x_2^2+x_3x_4+x_4x_3.\]

We consider the normal element $f = x_1x_4+x_3x_1 \in A$ in some detail. One checks that the algebra $\bar{A}^!$ has precisely four point modules, which determine, via Theorem \ref{F functor valued in NMF}, four rank $1$ matrix factorizations of $f$. More interestingly, there is also a rank $2$ noncommutative matrix factorization of $f$, which we now describe. 

Define a $\Z$-graded vector space $M = \bigoplus_{i \in \Z} M_i$ with $\dim_{\k} M_i = 2$ for all $i \in \Z$. Then there is a graded right $\bar{A}^!$-module structure on $M$ given as follows. Consider elements of the space $M_i$ to be $2 \times 1$ column vectors and let $x_1, x_2, x_3 , x_4 \in \bar{A}^!$ act on the left, respectively,  by the following matrices \[\begin{bmatrix} 0 & 1 \\ 0 & 0 \end{bmatrix}, \quad \begin{bmatrix} 0 & 0 \\ 1 & 0 \end{bmatrix}, \quad \begin{bmatrix} 0 & 0 \\ 0 & i \end{bmatrix}, \quad \begin{bmatrix} i & 0 \\ 0 & 0 \end{bmatrix},\] 
where $i \in \k$ is a fixed primitive fourth root of unity. Then this makes $M$ into a graded right $\bar{A}^!$-module, and moreover, $M \in {\tt P}(\bar{A}^!, f^!)$. In fact, one can also check that $M$ is a critical module with respect to Gelfand-Kirillov dimension, so $M$ is a fat point module of multiplicity $2$ for $\bar{A}^!$. Following the proof of Theorem \ref{F functor valued in NMF}, we find that the left $A$-linear maps $\phi_i^M: {\mc T}(M)_i \to {\mc T}(M)_{i+1}$ are given by right multiplication by the following $2 \times 2$ matrices over $A$ acting on $1 \times 2$ row vectors of ${\mc T}(M)_i$. We have
\[\phi_0^M \equiv \begin{bmatrix} ix_4 & x_2 \\ x_1 & ix_3 \end{bmatrix}, \ \phi_1^M \equiv \begin{bmatrix} -ix_2 & x_4 \\ x_3 & -ix_1 \end{bmatrix}, \ \phi_2^M \equiv \begin{bmatrix} ix_3 & x_1 \\ x_2 & ix_4 \end{bmatrix}, \ \phi_3^M \equiv \begin{bmatrix} -ix_1 & x_3 \\ x_4 & -ix_2 \end{bmatrix}\]
and $\phi_i^M \equiv \phi_{i(4)}^M$ for all $i \in \Z$. Using these matrices, the reader can check that indeed $\phi_{i+1}^M \phi_{i}^M$ is given by right multiplication by $f \in A$. For example, \[\phi_1^M \phi_0^M \equiv \begin{bmatrix} ix_4 & x_2 \\ x_1 & ix_3 \end{bmatrix} \begin{bmatrix} -ix_2 & x_4 \\ x_3 & -ix_1 \end{bmatrix} = \begin{bmatrix} x_4x_2+x_2x_3 & ix_4^2-ix_2x_1 \\ -ix_1x_2+ix_3^2 & x_1x_4+x_3x_1 \end{bmatrix} = \begin{bmatrix} f & 0 \\ 0 & f \end{bmatrix}.\]

The four point modules and the fat point module $M$ for $\bar{A}^!$ correspond to four one-dimensional simples and one two-dimensional simple for the algebra ${\mc C}(A, f)$. Since ${\mc C}(A, f)$ is eight-dimensional over $\k$, one can conclude via \cite[Proposition 7.2(2)]{Lam1} together with the assumption that $\k$ is algebraically closed, that ${\mc C}(A, f)$ is semisimple and isomorphic to the algebra $\k^{\oplus 4} \oplus {\rm M}_2(\k)$. 

A similar analysis for the other three quadratic normal elements in $f \in A$ yields that, in all cases, ${\mc C}(A, f)$ is semisimple and isomorphic to the algebra $\k^{\oplus 4} \oplus {\rm M}_2(\k)$. Hence, for all quadratic normal elements of $A$, the algebra $\bar{A}$ is a graded isolated singularity, in the sense of \cite[Definition 2.2]{U}. This answers the first query in \cite[Question 5.3.5]{GKMV} in the affirmative. 

\end{example}
\bibliographystyle{amsplain}
\bibliography{bibliog2}

\end{document}